\documentclass[twoside]{amsart}
\usepackage{graphicx}
% ----------------------------------------------------------------
\vfuzz2pt % Don't report over-full v-boxes if over-edge is small
\hfuzz2pt % Don't report over-full h-boxes if over-edge is small
% THEOREMS -------------------------------------------------------
\usepackage{amsthm}
\usepackage{amsmath}
\usepackage{amssymb}
\usepackage{bbm}

\usepackage{color}
\usepackage{hyperref}
\usepackage{bbm}
\usepackage[T1]{fontenc}

\pagestyle{myheadings}

\textwidth 6 in
\textheight 9 in
\oddsidemargin .45 in
\evensidemargin  .45 in
\topmargin 0 in
\baselineskip = 13 pt
\hfuzz 30 pt
\parindent  15 pt
\parskip = 4 pt

\newtheorem{thm}{Theorem}[section]

\newtheorem{cor}[thm]{Corollary}

\newtheorem{prop}[thm]{Proposition}
\theoremstyle{definition}
\newtheorem{defn}[thm]{Definition}
\newtheorem{example}{Example}

\theoremstyle{remark}
\newtheorem{rem}[thm]{Remark}

 \numberwithin{equation}{section}
% MATH -----------------------------------------------------------
\newcommand{\norm}[1]{\left\Vert#1\right\Vert}
\newcommand{\abs}[1]{\left\vert#1\right\vert}
\newcommand{\set}[1]{{\left\{#1\right\}}}

\newcommand{\I}{\mathbbm{1}}
\newcommand{\Tr}{ \text{Tr} }
\newcommand{\mtop}{{\!\top}}
\newcommand{\cadlag}{c\`{a}dl\`{a}g }
% ----------------------------------------------------------------

%various
%%%%%%%%%%%%%%%%%%%%%%%% author predifined macros

% ----------------------------------------------------------------
\title[Jump-diffusions ]{Jump-diffusion processes in random environments}
%[SDE's with L\'{e}vy noise] {Solutions to SDE's with L\'{e}vy type noise
%in random environments}
\author{Jacek Jakubowski}\thanks{Research partially supported by Polish MNiSW grant N N201 547838.}
\address{
Institute of Mathematics, University
of Warsaw, \\
 ul. Banacha 2, 02-097 Warszawa, Poland \\
 and \\
 Faculty of
Mathematics and
Information Science,  \\
Warsaw University of
Technology \\
E-mail: {\tt jakub@mimuw.edu.pl }
}

\author{Mariusz Niew\k{e}g\l owski}
\address{
Department of Applied Mathematics \\
Illinois Institute of Technology\\
Chicago, IL 60616, USA,
and
Faculty of Mathematics and
Information Science, Warsaw University of Technology\\
ul. Plac Politechniki 1, 00-661 Warszawa, Poland \\
E-mail:  {\tt
M.Nieweglowski@mini.pw.edu.pl}
}

%    \usepackage{fancyhdr}
%    \usepackage{lastpage}
%    \pagestyle{fancy}
%    \fancyhf{} % remove everything
%    \renewcommand{\headrulewidth}{0pt} % remove lines as well
%    \renewcommand{\footrulewidth}{0pt}
%    \lhead{\texttt{\today}}
%    \chead{ \shorttitle }
%    \rhead{\thepage\ of \pageref{LastPage} }
%    \newcommand{\mn}[1]{\begin{color}{black}{#1}\end{color}}
% \def \cb { \textcolor {blue} }
\begin{document}
\maketitle

\begin{center}
First version circulated since 5 June 2011
%This version of 10-August-2012 \\
%Date of printing \today
\end{center}
\begin{abstract}
In this paper we investigate jump-diffusion processes in random environments which are given as the weak solutions to SDE's. We formulate conditions ensuring existence and uniqueness
in law of solutions. We investigate Markov property.
To prove  uniqueness we solve a general martingale problem for \cadlag processes. This result is of independent interest. Application of our results to a generalized exponential Levy model and a semi-Markovian regime switching model are presented in the last section.
\end{abstract}

\noindent
\begin{quote}
 \noindent  \textbf{Key words}: Jump-diffusion, Stochastic differential equations, Markov switching.

\textbf{AMS Subject Classification}: 60H20, 60H30, 60J65, 60G55, 60G50

\end{quote}

%\tableofcontents
\section{Introduction}

In this paper we investigate properties of stochastic differential equation (SDE)  which describes a behavior of some system in a random
environment in the time  interval  $[\![ 0, T ]\!]$, $T < \infty$. We
model this behavior by a process $Y$ being jump-diffusion which is a solution to some SDE. As an example of SDE
%which we consider
considering in this paper, an SDE driven
by a L\'{e}vy process $Z$ and some counting processes can be taken. This SDE has a nice interpretation so we describe it in detail. Let $Z$ be an
$n$-dimensional L\'{e}vy process with the L\'{e}vy measure $\nu$
satisfying
\begin{equation}\label{eq:nu-int-cond}
    \int_{\mathbf{R}^n} \big( \norm{x}^2 \wedge 1 \big) \nu(dx) < \infty.
\end{equation}
By $\mathcal{K} = \set{1 , \ldots , K }$ we denote the set of indices of
environments in which our system can stay. The set $\mathcal{K}$ can describe states of hybrid model, states of economy, rating classes in modeling credit risk etc.
 A stochastic process $C$ with values in
$\mathcal{K}$ pointing out a type of  environment in which our system lives.
We assume that $C$ is a c\`{a}dl\`{a}g  process. Every change of $C$ results in  change
of drift and volatility of an SDE. Moreover, every jump of $C$ from
$i$ to $j$ results in  a jump of size ${\rho^{i,j}}$ of a system.
Jumps of $C$ are described by SDE driven by counting processes. Therefore, the evolution of $(Y,C)$ can be described as a solution of the
following SDE in $\mathbf{R}^{d} \times \mathcal{K}$:
\begin{equation}\label{eq:SDE-1}
\left\{
  \begin{array}{ll}
 d Y_t & =   \mu(t, Y_{t-} , C_{t-}) dt  +  \sigma( t, Y_{t-
}, C_{t-}) d Z_t
  + \sum_{ \substack{i,j=1 \\ j\neq i} }^K \rho^{i,j}(t,Y_{t-}) \I_\set{i}(C_{t-}) d N^{i,j}_t,
\\
 d C_t & = \sum_{ \substack{i,j=1 \\ j\neq i} }^K (j - i) \I_{i} (C_{t-}) d
 N^{i,j}_t ,
 \\
 Y_0 & = y , \quad C_0 = c \in \mathcal{K}.
  \end{array}
\right.
\end{equation}
For fixed $i,j \in \mathcal{K}$, the process $N^{i,j}$  is a counting point
process with intensity $\lambda^{i,j}(t,Y_{t-})$, and
\begin{flalign*}
(\mathbf{\Lambda}) &&
 \lambda^{i,j}(\cdot, \cdot) : [\![0,T]\!] \times
\mathbf{R}^d \rightarrow \mathbf{R}_+ \text{ is a bounded
continuous function in } (t, y).
\end{flalign*}
It means that, for the fixed  $(i,j)$, the process
\begin{align}\label{eq:mart-Mij}
M^{i,j}_t := N^{i,j}_t - \int_{]0,t]} \lambda^{i,j}(u, Y_{u-}) du
\end{align}
is a martingale. Moreover, we require that $ N^{i,j}_0=0$ and processes $Z$,
$N^{i,j}$, $i,j \in \mathcal{K}$, $i \neq j$, have no common
jumps, i.e.,
  \begin{equation}
 \Delta Z_t \Delta N^{i,j}_t =0 \quad  \mathbf{P} - a.s.,
\end{equation}
and for all $(i_1,j_1) \neq ( i_2,j_2)$ we have
\begin{equation}
    \Delta N^{i_1,j_1}_t\Delta N^{i_2,j_2}_t =  0 \quad  \mathbf{P} - a.s.
\end{equation}
The coefficients in SDE \eqref{eq:SDE-1} are measurable functions
$\mu(\cdot, \cdot,\cdot): [\![0,T]\!] \times \mathbf{R}^d \times
\mathcal{K} \rightarrow \mathbf{R}^d $, $\sigma( \cdot, \cdot,
\cdot ) : [\![0,T]\!] \times \mathbf{R}^d \times \mathcal{K}
\rightarrow \mathbf{R}^{d\times n} $ and $\rho^{i,j}(\cdot,\cdot)
: [\![0,T]\!] \times \mathbf{R}^d  \rightarrow \mathbf{R}^d$. This
SDE is a non-standard one since driving noise depends on the
solution itself, similarly as in Jacod and Protter \cite{jp1978}
or in Becherer and Schweizer \cite{bs2005}. That is, the noise
$(N^{i,j})_{i,j \in \mathcal{K} : j\neq i}$ is not given apriori
and is also constructed, so the solution is not a pair $(Y,C)$ but
a quadruple $(Y,C, (N^{i,j})_{i,j \in \mathcal{K} : j\neq i},Z)$.

%The presence of functions $\rho^{i,j}$ in the above SDE is very important from the point of view of possible %applications, because it adds extra flexibility.

Therefore it is interesting to find a weak solution to
\eqref{eq:SDE-1} and to formulate conditions ensuring uniqueness
in law of solutions. In our paper we investigate more general jump-diffusion processes than SDE \eqref{eq:SDE-1}. We construct a weak solution % $(Y,C,N,Z)$
to this general SDE \eqref{eq:SDE-gen} (Thm. \ref{thm:sc-construction}) and show Markov property of components $(Y,C)$ of this solution (Thm. \ref{thm:markov-prop}).
 In Section 3, we show  finiteness of $2m$-moments of
$\sup_{t \in [\![0,T]\!]} |Y_t|$ for $Y$ being component of any weak solution to \eqref{eq:SDE-gen}.
Our goal is to prove uniqueness in law of solutions to SDE \eqref{eq:SDE-gen}.
To do this we use the notion of martingale problem.
So, in Section 4, we consider and solve a general martingale problem for \cadlag processes. Results of this section are of independent interest.
 In Section 5, using the notion of martingale problem we solve the problem of uniqueness in law of solutions to \eqref{eq:SDE-gen} under some natural condition imposed on coefficients of SDE and intensity. Knowing that the components $(Y,C)$ of solution solve local martingale problem (Prop. \ref{prop:mart-prob}) and that for any solution  of local martingale problem holds
$
\mathbb{E} \left(  \sup_{ t \leq T} |Y_t|^{2m}\right) < \infty
$ (Thm. \ref{prop:moment-mp}), we prove in Theorem  \ref{prop:localmp-mp} that a solution of local martingale problem is a solution of martingale problem. Next, we prove that  a martingale problem is well-posed (Thm. \ref{thm:markov-mp}) which  implies uniqueness in law of solution to \eqref{eq:SDE-gen}.
Application of our results is  presented in the last section on an example of semi-Markovian regime switching model and  on an example of generalized exponential Levy model. This model is very useful in finance, see  Cont and Voltchkova \cite{convol2005a} or Jakubowski and Niew\k{e}g\l owski \cite{jaknie2012}.
The model considered in our paper is related to the regime switching diffusion models with state-dependent switching which were considered in Becherer and Schweizer \cite{bs2005}, Yin and Zhu \cite{yinzhu2010}, \cite{yinzhu2010a}, Yin, Mao, Yuan and Cao \cite{yinmaoyc2010} amongst others. Our model generalize jump-diffusions with state dependent switching introduced by Xi and Yin \cite{xiyin2011}, \cite{yinxi2010} and studied further by Yang  and Yin \cite{yangyin2012}. Our main contribution is the presence of functions $\rho^{i,j}$ in  SDE \eqref{eq:SDE-1}, which allows to model the fact that the process $Y$ jumps at the time of switching the regime $C$.
 This is very important from the point of view of applications, because it adds extra flexibility to the model.
 For example, it gives us a possibility of introducing the dependence of intensity of jumps $C$ at time $t$ on the trajectory of process $C$ up to time $t-$. This is the case of semi-Markov processes, where $\lambda^{i,j}$ at time $t$ depends on time that process $C$ spends in current state after the  last jump.
The process (say $Y^1$) corresponding to this semi-Markovian dependence can be introduced in our framework by setting
\[
	d Y^1_t = dt - \sum_{i,j \in \mathcal{K}: i \neq j } Y^1_{t-} \I_{i}(C_{t-}) d N^{i,j}_t.
\]
%(proces rosnie liniowo, a nastepnie w kazdym momencie skoku $C$, spada do $0$ i znowu rosnie liniowo, wiec jego trajektorie odpowiadaja procesowi czasu od ostatniego skoku)
Hence, allowing  $\lambda^{i,j}$ to be a (non-constant) function of $Y^1$ we obtain a semi-Markov model.

\section{Solutions to SDE's defining jump-diffusions in random environments}
\subsection{Formulation of problem}

Fix $T < \infty$. We investigate a weak solution to a SDE on a time interval $[\![r,T]\!]$, $r < T$, given by
\begin{equation}\label{eq:SDE-gen}
\left\{
  \begin{array}{ll}
 d Y_t &= \mu(t, Y_{t-} , C_{t-}) dt  +  \sigma( t, Y_{ t-
}, C_{t-}) d W_t + \int_{\norm{x} \leq a } F(t,Y_{t-},C_{t-}, x )
\widetilde{\Pi}(dx , dt)
\\
& \
  + \int_{\norm{x} > a } F(t,Y_{t-},C_{t-}, x ) \Pi(dx , dt)  + \sum_{ \substack{i,j=1 \\ j\neq i} }^K \rho^{i,j}(t,Y_{t-}) \I_\set{i}(C_{t-}) d N^{i,j}_t,
\\
& \\
 d C_t &= \sum_{ \substack{i,j=1 \\ j\neq i} }^K (j - i) \I_{i} (C_{t-}) d
 N^{i,j}_t ,
 \\
 Y_r & = y , \quad C_r = c \in \mathcal{K},
  \end{array}
\right.
\end{equation}
where $W$ is a standard $p$-dimensional Wiener process, $a > 0$ is
fixed, $\Pi(dx,dt)$ is a Poisson random measure on
$\mathbf{R}^n \times [\![r,T]\!]$ with the intensity measure
$\nu(dx) dt $, $\nu$ is a Levy measure, and
$N^{i,j}$  are counting point processes with intensities determined by
$\lambda^{i,j}$, bounded nonnegative continuous functions in $(t, y)$,
such that processes defined by \eqref{eq:mart-Mij} are martingales.
By $\widetilde{\Pi}$ we denote the compensated measure of $\Pi$, i.e.,
 \[
    \widetilde{\Pi}(dx , dt) := \Pi (dx , dt) - \nu(dx )dt.
 \]
The coefficients in SDE \eqref{eq:SDE-gen} are measurable, locally bounded, deterministic
functions $\mu(\cdot,\cdot, \cdot): [\![0,T]\!] \times
\mathbf{R}^d \times \mathcal{K} \rightarrow \mathbf{R}^d $,
$\sigma( \cdot, \cdot, \cdot ) : [\![0,T]\!] \times \mathbf{R}^d
\times \mathcal{K} \rightarrow \mathbf{R}^{d\times p} $,
$\rho^{i,j}(\cdot,\cdot) : [\![0,T]\!] \times \mathbf{R}^d
\rightarrow \mathbf{R}^d$, and
$F(\cdot,\cdot, \cdot, \cdot ) :
[\![0,T]\!] \times \mathbf{R}^d \times \mathcal{K} \times
\mathbf{R}^n \rightarrow \mathbf{R}^{d} $  is such that the mapping
\begin{align}\label{eq:F2-loc-bound}
(t,y,c) \longmapsto
\int_{\norm{x} \leq a
}\!\!\left| F(t,y,c,x) \right|^2\nu(dx)\!
\end{align}
is locally bounded. Moreover, we require that the
Poisson random measure $\Pi $ and the processes $N^{i,j}$, $i,j
\in \mathcal{K}$, $i \neq j$, have no common jumps, i.e., for
every $t > r > 0 $ and every $b>0$,
  \begin{equation} \label{eq:hip-A}
\int_r^t \int_{ \norm{ x} > b}
 \Delta N^{i,j}_u \Pi( dx, du) =0 \quad  \mathbf{P} - a.s.
\end{equation}
and for all $(i_1,j_1) \neq ( i_2,j_2)$,
 \begin{equation}
\label{eq:hip-B}
    \Delta N^{i_1,j_1}_t\Delta N^{i_2,j_2}_t =  0 \quad  \mathbf{P} - a.s.
\end{equation}
SDE \eqref{eq:SDE-gen} is a generalization of SDE \eqref{eq:SDE-1}, but has no such simple interpretation as
\eqref{eq:SDE-1}. However \eqref{eq:SDE-1}  allows to describe the more complex systems.
%Taking a special form of $F$, namely:
If
\[ F(t,y,c,x) = \sigma(t,y,c) x,
\]
then SDE \eqref{eq:SDE-gen}  takes the form \eqref{eq:SDE-1}. The
following   processes play an important role in our considerations:
\begin{align}\label{eq:Hi}
    H^i_t := \I_\set{ C_t = i},
\end{align}
\begin{align}\label{eq:Hij}
    H^{i,j}_t := \sum_{ r < u \leq t} \I_\set{ C_{u-} = i}\I_\set{ C_{u} = j} =\sum_{ r < u \leq t} H^{i}_{u-}H^{j}_{u}
\end{align} for $i,j \in \mathcal{K}$, $ i \neq j$. The random variable  $H^i_t$ indicates a state in which $C$ is at the moment $t$, and $ H^{i,j}_t$ counts a number of jumps of $C$ from $i$ to $j$ up to time $t$.

\begin{rem}
To distinct solutions started  from different $y,c$ at different times $r\in [\![0,T]\!]$ it is  convenient to denote by $(Y^{r,y,c}_t,C^{r,y,c}_t)_{t \in [\![r,T]\!]}$ a solution to SDE \eqref{eq:SDE-gen} started from $(y,c)$ at time $r$.  Sometimes,
for a notational convenience we drop $r,y,c$ in this notation if there is no confusion.
By $\mathbf{P}_{r,y,c}$ we denote the law of $(Y^{r,y,c}_t,C^{r,y,c}_t)_{t \in [\![r,T]\!]}$.
\end{rem}

\subsection{Existence of  weak solutions to SDE's
%with L\'{e}vy noise
in random environments}

We prove the existence of a weak solution to SDE \eqref{eq:SDE-gen}
  using an argument of a suitable change of
measure (cf. \cite{bs2005} or Kusuoka \cite{kus1999}). For a
matrix $A \in \mathbf{R}^{d\times d}$, by $\norm{ A }$ we denote
the matrix norm given by
\[
    \norm{ A }^2 := \sum_{i=1}^d \sum_{j=1}^d \abs{a_{i,j}}^2,
\]
and for simplicity of notation we use $N$ for $(N^{i,j})_{i,j \in \mathcal{K} : j\neq i}$.
\begin{thm}\label{thm:sc-construction}
Assume that coefficients $\mu$, $\sigma$, $F$  satisfy conditions:

\noindent a) the linear growth condition: there exists a constant $K_1>0$ such that
\begin{flalign*}
(\mathbf{LG}) && \left| \mu(t,y,c) \right|^2 + \norm{
\sigma(t,y,c) \sigma(t,y,c)^\top } + \int_{\norm{x} \leq a } \abs{
F(t,y,c,x) }^2 \nu(   dx) \leq K_1 ( 1 + \abs{y}^2),&&
\end{flalign*}
\noindent b) the Lipschitz condition: there exists a constant $K_2>0$ such that
\begin{flalign*}
(\mathbf{Lip}) && &
 \left| \mu(t,y_1,c) - \mu(t,y_2,c)  \right|^2 +
\norm{ \sigma(t,y_1,c)  - \sigma(t,y_2,c) }^2 & \\
&&& + \int_{\norm{x} \leq a} \abs{  F(t,y_1,c,x) - F(t,y_2 ,c,x) }^2
\nu(dx)
 \leq K_2 \abs{y_1 - y_2 }^2, &
\end{flalign*}
\noindent c)  for every $c \in \mathcal{K}$ and every $k \in
\mathcal{K}\setminus \set{c}$
\begin{flalign*}
(\mathbf{Cont}) &&  \!\!\! (t, y) \rightarrow F(t,y,c,x) \quad
\text{for $\norm{x} > a  $},\ and \quad (t, y) \rightarrow
\rho^{c,k}(t, y) & \text{ are continuous. }
\end{flalign*}
Then there exists a weak solution $(Y,C, W, \Pi, N)$ to SDE \eqref{eq:SDE-gen} on $[\![r,T]\!]$, which is
adapted, c\`{a}dl\`{a}g, and moreover \eqref{eq:hip-A} and
\eqref{eq:hip-B} are satisfied.
\end{thm}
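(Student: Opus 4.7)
The plan is to construct a weak solution via a Girsanov-type change of measure, in the spirit of Becherer and Schweizer \cite{bs2005} and Kusuoka \cite{kus1999}: first I build an auxiliary model on a reference probability space where all driving noises are mutually independent with constant jump intensities, and then I tilt the measure so that the counting processes acquire the prescribed state-dependent intensities $\lambda^{i,j}(t, Y_{t-})$.

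By $(\mathbf{\Lambda})$ there is a constant $\bar{\lambda} > 0$ with $\lambda^{i,j}(t,y) \leq \bar{\lambda}$ uniformly. On an auxiliary space $(\bar{\Omega}, \bar{\mathcal{F}}, \bar{\mathbf{P}})$ I take mutually independent objects: a $p$-dimensional Brownian motion $W$, a Poisson random measure $\Pi$ on $\mathbf{R}^n \times [\![r,T]\!]$ with compensator $\nu(dx)\,dt$, and Poisson processes $\bar{N}^{i,j}$ ($i \neq j$) of constant intensity $\bar{\lambda}$; mutual independence makes \eqref{eq:hip-A} and \eqref{eq:hip-B} hold $\bar{\mathbf{P}}$-a.s. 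Condition \eqref{eq:nu-int-cond} implies $\nu(\set{\norm{x} > a}) < \infty$, so the jump times of the $\bar{N}^{i,j}$ together with the atoms of $\Pi$ on $\set{\norm{x} > a}$ form a locally finite sequence of stopping times $r = \tau_0 < \tau_1 < \cdots$ on $[\![r,T]\!]$. I construct $(Y, C)$ inductively: on $[\tau_k, \tau_{k+1})$, $C$ is frozen at $C_{\tau_k}$, and $Y$ solves the standard It\^o SDE driven by $W$ and the small-jump compensated part of $\Pi$ with Lipschitz coefficients; by $(\mathbf{LG})$ and $(\mathbf{Lip})$, Picard iteration yields a unique c\`{a}dl\`{a}g strong solution. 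At $\tau_{k+1}$ I append the jump prescribed either by $F(\tau_{k+1}, Y_{\tau_{k+1}-}, C_{\tau_{k+1}-}, x)$ (if $\tau_{k+1}$ is an atom of $\Pi$ at a point $x$ with $\norm{x} > a$) or by $\rho^{i,j}(\tau_{k+1}, Y_{\tau_{k+1}-})$ together with $C_{\tau_{k+1}} = j$ (if $\tau_{k+1}$ is a jump of $\bar{N}^{i,j}$ with $C_{\tau_{k+1}-} = i$); these assignments are meaningful thanks to $(\mathbf{Cont})$.

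Now I tilt the measure. Set
\begin{equation*}
L_t := \prod_{i \neq j} \mathcal{E}\left( \int_{]r, \cdot]} \left( \frac{\lambda^{i,j}(u, Y_{u-})}{\bar{\lambda}} - 1 \right) d\bar{M}^{i,j}_u \right)_t, \qquad \bar{M}^{i,j}_t := \bar{N}^{i,j}_t - \bar{\lambda}(t-r).
\end{equation*}
Since the integrands lie in $[-1, 0]$, $L$ is a nonnegative local martingale; boundedness of $\lambda^{i,j}$ and the finite horizon $T$ upgrade $L$ to a true $\bar{\mathbf{P}}$-martingale, and I set $d\mathbf{P}/d\bar{\mathbf{P}} := L_T$ on $\bar{\mathcal{F}}_T$. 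Girsanov's theorem for semimartingales and for random measures then yields, under $\mathbf{P}$: $W$ remains a Brownian motion (since $L$ is purely discontinuous), $\Pi$ retains its compensator $\nu(dx)\,dt$ (since $L$ has no common jumps with $\Pi$), and each $\bar{N}^{i,j}$ has $\mathbf{P}$-compensator $\int_{]r, \cdot]} \lambda^{i,j}(u, Y_{u-})\,du$. Writing $N^{i,j} := \bar{N}^{i,j}$, the martingale property \eqref{eq:mart-Mij} follows, and \eqref{eq:hip-A}, \eqref{eq:hip-B} pass from $\bar{\mathbf{P}}$ to $\mathbf{P}$ because $\mathbf{P} \ll \bar{\mathbf{P}}$. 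Since $(Y, C)$ was constructed pathwise to satisfy \eqref{eq:SDE-gen}, the tuple $(Y, C, W, \Pi, N)$ is the desired weak solution.

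The delicate step is verifying that $L$ is a genuine martingale rather than merely a local one: the classical Novikov criterion concerns continuous martingales and does not apply directly. The typical workaround is to show $\bar{\mathbb{E}}[L_{t \wedge \tau_n}] = 1$ along a localizing sequence and pass to the limit by a Gronwall-type estimate that exploits the boundedness of $\lambda^{i,j}$ and the finiteness of $T$. A secondary, minor technicality implicit in the pathwise construction is the non-accumulation of the $\tau_k$ inside $[\![r,T]\!]$, which is automatic since the expected total number of driving jump times on $[\![r,T]\!]$ is finite.
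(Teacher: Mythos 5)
Your proposal follows essentially the same route as the paper: solve the SDE on a reference space where the counting processes are independent Poisson processes of constant intensity (the paper takes intensity one and invokes Applebaum's existence theorem where you interlace by hand), then tilt by the Dol\'eans-Dade exponential so that the $N^{i,j}$ acquire intensities $\lambda^{i,j}(t,Y_{t-})$, and transfer the no-common-jumps conditions \eqref{eq:hip-A}--\eqref{eq:hip-B} by absolute continuity. The one step you flag as delicate---that the density is a true martingale---is handled in the paper by citing Br\'emaud [Thm.~VI.2.T4], and is in fact immediate in your setup because the integrands lie in $[-1,0]$, so the Dol\'eans-Dade exponential is nonnegative and bounded by $e^{\bar\lambda K(K-1)(T-r)}$.
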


\begin{proof}
In the first step we consider a filtered probability space
$(\Omega, \mathcal{F}, \mathbb{F},\mathbf{P})$ on which there
exist independent processes: a standard Brownian motion $W$,
a Poisson random measure $\Pi(dx, dt)$  with intensity measure
$\nu(dx)dt$
 and the Poisson processes $(N^{i,j})_{i,j \in
\mathcal{K} : i \neq j }$  with intensities equal to one.
Let us consider, on this stochastic basis, a SDE for $[\![r,T]\!]$:
\begin{align}
&&\nonumber d Y_t &= \mu( t, Y_{t-} , C_{t-}) dt  + \sigma( t, Y_{t-
}, C_{t-})dW_t + \!\int_{\norm{x} \leq a } \!\!\!\!F(t,Y_{t-},C_{t-}, x )
\widetilde{\Pi}(dx , dt)
\\
\label{eq:SDE-2}  &&&
  \ \ \ + \!\int_{\norm{x} > a } \!\!\!\!F(t,Y_{t-},C_{t-}, x ) \Pi(dx , dt)
  + \sum_{\substack{i,j \in \mathcal{K} \\ j\neq i } } \rho^{i,j}(t,Y_{t-}) \I_\set{i}(C_{t-})  d N^{i,j}_t,
\\
&&
\nonumber
 d C_t &= \sum_{ \substack{ i, j \in \mathcal{K} \\  j  \neq i } } (j - i) \I_{i} (C_{t-}) d
 N^{i,j}_t
\end{align}
with $Y_r = y$, $C_r = c \in \mathcal{K}$. For this SDE  results
on existence and uniqueness of solutions exist (see, e.g.,  Applebaum
\cite[Thm. 6.2.9]{apple2004}). So, under our assumptions, there
exists the unique strong solution $(Y,C)$ to  SDE
\eqref{eq:SDE-2} which is adapted and c\`{a}dl\`{a}g. Let us
define a new measure $\mathbf{P}^{\lambda,r}$ by
\begin{equation}\label{eq:density-p-lambda}
\frac{ d \mathbf{P}^{\lambda,r} }{ d \mathbf{P}}=  Z_T :=
\mathcal{E}_T \left( \sum_{\substack{ i,j \in \mathcal{K}  \\ j \neq i } } \int_{]\!]r, \cdot]\!]} (
\lambda^{i,j}(t,Y_{t-}) - 1) ( d N^{i,j}_t - dt )\right),
\end{equation}
where $\mathcal{E}$ denotes the Doleans-Dade exponential.
The assumption that $\lambda^{i,j}$ are non-negative bounded measurable functions
for $i,j \in \mathcal{K} $, $i \neq j $
implies that $\mathbf{P}^{\lambda,r}$  is a probability measure
(see Br\'{e}maud
\cite[Thm. VI.2.T4]{bremaud1981}).
 Moreover, for each
$i,j$ the standard Poisson process $(N^{i,j})_{t \in [\![r,T]\!]}$ under $\mathbf{P}$ is
a counting process under $\mathbf{P}^{\lambda,r}$ with the
$\mathbf{P}^{\lambda,r}$-compensator $(\int_{]r,t]} \lambda^{i,j}(u,
Y_{u-}) du)_{t \in [\![r,T]\!]}$  (see \cite[Thm.
VI.2.T2]{bremaud1981}). Hence, we see that if $(Y,C)$ is a
solution to \eqref{eq:SDE-2}, then the process $(Y,C, W, \Pi,
N)$ solves
\eqref{eq:SDE-gen} under $\mathbf{P}^{\lambda,r}$.
 It remains to prove that processes   have no common jumps, i.e. \eqref{eq:hip-A} and \eqref{eq:hip-B}.
Let, for a fixed $b>0$,
\[
A^{i,j}_b := \set{ \omega: \exists t \in [\![r,T]\!] \ \ such \ that \ \
\int_{r}^t \int_{\norm{x} > b } \Delta N^{i,j}_u(\omega) \Pi(du,dx
) \neq 0 }.
\]
Independence of $N^{i,j}$ and $\Pi$ under $\mathbf{P}$ yields that
$\mathbf{P}(A^{i,j}_b) =0 $ for each $b > 0$. Absolute continuity
of $\mathbf{P}^{\lambda,r} $ with respect to $\mathbf{P}$ implies that
$\mathbf{P}^{\lambda,r}(A^{i,j}_b) =0 $, so $N^{i,j}$ and $\Pi$ have
no common jumps. In an analogous way we see that the processes $N^{i,j}$ and
$N^{k,l}$ have no common jumps for $(i,j)\neq (k,l)$.
 \end{proof}

\subsection{Markov property of solutions }

At first, we are interested in Markov property of solutions to SDE
\eqref{eq:SDE-gen}. Note that in the formulation of  SDE
coefficients depend only on values of $(Y, C)$ at time $t-$, and
since a driving noise can be considered as Poissonian type we can
expect that a solution of this SDE possesses a Markov property. We
start from the study of solution constructed in Theorem
\ref{thm:sc-construction}.
\begin{thm}\label{thm:markov-prop}
The components $(Y,C)$ of the solution to
the SDE \eqref{eq:SDE-gen} constructed in Theorem
\ref{thm:sc-construction}
have Markov property.
 \end{thm}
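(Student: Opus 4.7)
The plan is to exploit the fact that on the reference space of Theorem \ref{thm:sc-construction}, the SDE \eqref{eq:SDE-2} has a \emph{strong} solution (driven by independent $W$, $\Pi$, and unit-intensity $N^{i,j}$), and then to transport the Markov property to $\mathbf{P}^{\lambda,r}$ through the density $Z_T$ defined in \eqref{eq:density-p-lambda}.

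Fix $r\le s<t\le T$ and a bounded Borel function $f:\mathbf{R}^d\times\mathcal K\to\mathbf{R}$. I want to show that $\mathbb{E}^{\lambda,r}[f(Y_t,C_t)\mid\mathcal F_s]$ is a measurable function of $(Y_s,C_s)$. The first step is a multiplicative decomposition of the density: since $Z$ is the Dol\'eans exponential of a purely discontinuous $\mathbf{P}$-local martingale driven by the $N^{i,j}$, one has $Z_T=Z_s\,Z_{s,T}$, where
\[
Z_{s,T}=\mathcal E_{s,T}\!\left(\sum_{i\neq j}\int_{]s,\cdot]}(\lambda^{i,j}(u,Y_{u-})-1)(dN^{i,j}_u-du)\right)
\]
is $\mathcal F_{s,T}$-measurable in the obvious sense. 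Bayes' rule then gives
\[
\mathbb{E}^{\lambda,r}\bigl[f(Y_t,C_t)\mid\mathcal F_s\bigr]=\frac{1}{Z_s}\mathbb{E}\bigl[Z_T f(Y_t,C_t)\mid\mathcal F_s\bigr]=\mathbb{E}\bigl[Z_{s,T}f(Y_t,C_t)\mid\mathcal F_s\bigr],
\]
so it suffices to analyse the right-hand side under $\mathbf{P}$.

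The second step is the flow property. By strong (pathwise) uniqueness of \eqref{eq:SDE-2}, the restriction of $(Y,C)$ to $[\![s,T]\!]$ coincides $\mathbf{P}$-a.s.\ with the unique strong solution started from $(Y_s,C_s)$ at time $s$ and driven by the post-$s$ increments of $W$, $\Pi$ and the $N^{i,j}$. A standard measurable-selection/Gronwall argument (using assumptions $(\mathbf{LG})$, $(\mathbf{Lip})$ and \eqref{eq:F2-loc-bound}) yields a jointly measurable map $\Phi$ such that
\[
(Y_u,C_u)_{u\in[\![s,T]\!]}=\Phi\bigl(Y_s,C_s;\,W_{\cdot}-W_s,\Pi|_{]s,T]},(N^{i,j}|_{]s,T]})_{i\neq j}\bigr)\qquad \mathbf{P}\text{-a.s.}
\]
Because $Z_{s,T}$ is itself a functional of $Y|_{[s,T]}$ and the $N^{i,j}|_{]s,T]}$, both $Z_{s,T}$ and $f(Y_t,C_t)$ are of the form $\Psi(Y_s,C_s;\eta)$, where $\eta$ denotes the driving noise on $[\![s,T]\!]$. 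Under $\mathbf{P}$, $\eta$ is independent of $\mathcal F_s$, while $(Y_s,C_s)$ is $\mathcal F_s$-measurable, so the usual freezing lemma gives
\[
\mathbb{E}\bigl[Z_{s,T}f(Y_t,C_t)\mid\mathcal F_s\bigr]=g(s,Y_s,C_s),\qquad g(s,y,c):=\mathbb{E}^{\lambda,s}_{s,y,c}\bigl[f(Y_t,C_t)\bigr],
\]
which is precisely the Markov property under $\mathbf{P}^{\lambda,r}$.

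The step I expect to be the main technical obstacle is producing the functional $\Phi$ with the required joint measurability in the initial condition $(y,c)$ and the noise path. Strong uniqueness in the sense of Applebaum \cite[Thm.~6.2.9]{apple2004} only delivers a solution for each fixed starting point; one then has to upgrade to a single version of the flow that is measurable in $(s,y,c,\omega)$ on an exceptional set not depending on $(y,c)$. The finite state space $\mathcal K$ lets us handle $c$ by a finite disjunction, and for $y$ one uses the standard Picard iteration together with the Lipschitz and linear-growth bounds to obtain the required measurable dependence. Once $\Phi$ is in hand, the remaining computations are routine.
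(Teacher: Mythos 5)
Your proposal is correct and follows essentially the same route as the paper's own proof: factor the Girsanov density multiplicatively as $Z_T=Z_s\,Z_{s,T}$, apply the Bayes rule to reduce the conditional expectation under $\mathbf{P}^{\lambda,r}$ to one under the reference measure $\mathbf{P}$, and then use the flow property of the strong solution of the auxiliary SDE \eqref{eq:SDE-2} together with independence of the post-$s$ noise increments (the paper delegates the joint-measurability of the flow, which you rightly flag as the technical crux, to Protter's Theorem V.6.32).
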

\begin{proof}
Let $(Y,C,W,\Pi, (N^{i,j})_{i ,j \in K : j \neq i})$ on
$(\Omega,\mathcal{F} , \mathbb{F},\mathbf{P}^{\lambda,r} )$ be the
solution to SDE \eqref{eq:SDE-gen} on $[\![r,T]\!]$ constructed in Theorem
\ref{thm:sc-construction}. Fix $t \geq r  $, and let $X $ be an arbitrary
bounded random variable measurable with respect to
$\sigma((Y_u,C_u) : T \geq u \geq t)$. To prove Markovianity it is sufficient to check
that there exists a measurable function $f$  such that
\begin{align}\label{eq:markov}
\mathbf{E}_{\mathbf{P}^{\lambda,r}}( X | \mathcal{F}_t )= f(t,Y_t , C_t).
\end{align}
The density process defined by the Doleans-Dade exponential
\eqref{eq:density-p-lambda} can be written explicitly (see Protter
\cite[Thm. II.8.37]{prot2004}) as
\[
    Z_t = \prod_{ \substack{ i,j \in \mathcal{K} \\  j \neq i }}   \left( e^{\int_r^t (\lambda^{i,j}(u, Y_{u-}) -1 ) du }
     \prod_{ r < u \leq t} \left(  1 +   (\lambda^{i,j}(u, Y_{u-}) -1 ) \Delta N^{i,j}_u \right)
     \right) .
\]
For convenience we introduce the following notation
\[
    L_{s,t} := \prod_{ \substack{ i,j \in \mathcal{K} \\  j \neq i } }   \left( e^{\int_s^t (\lambda^{i,j}(u, Y_{u-}) -1 ) du }
     \prod_{ s < u \leq t} \left(  1 +   (\lambda^{i,j}(u, Y_{u-}) -1 ) \Delta N^{i,j}_u \right)
     \right),
\]
which allows to write the density in the shorter way
\[
    Z_T = L_{r,T} = L_{r,t} L_{t,T} = Z_t L_{t,T}.
\]
%and this implies that
%\[
%\frac{Z_T }{Z_t }
%=
%\prod_{i,j : j \neq i }   \left( e^{\int_t^T (\lambda^{i,j}(u, Y_{u-}) -1 ) du } \prod_{ t < u \leq T} \left(  1 +   (\lambda^{i,j}(u, Y_{u-}) -1 ) \Delta N^{i,j}_u \right) \right).
%\]
Using similar arguments as in the proof of abstract Bayes formula
we get
\begin{align}\label{eq:bayes}
&
\I_\set{Z_t > 0}
\mathbf{E}_{\mathbf{P}^{\lambda,r}}( X | \mathcal{F}_t ) =
\I_\set{Z_t > 0}
\mathbf{E}_{\mathbf{P}} \left( X  L_{t,T}| \mathcal{F}_t \right)
= f(t,Y_t, C_t) \I_\set{Z_t > 0},
\end{align}
where the last equality follows from the fact that the random variable $X L_{t,T}$ is
$\mathbf{P}$ integrable and measurable with respect to
$\sigma((W_u - W_t, \Pi(A,[t,u]), (N^{i,j}_u - N^{i,j}_t)_{i,j \in
\mathcal{K} : j\neq i}) : A \in \mathcal{B} (\mathbf{R}^n ): T
\geq u \geq t)$ (see \cite[Thm. V.6.32]{prot2004}). Equality
\eqref{eq:bayes} implies \eqref{eq:markov} since
$\mathbf{P}^{\lambda,r}( Z_t = 0 ) = 0$.
%To prove
%the  second part, we define  for $v \in C^{1,2}_b$, the process
%$M^v$ by setting
%\[
%    M^v_t := v(Y_t,C_t ) - v(Y_0,C_0 ) - \int_0^t A v(Y_u, C_u) du.
%\]
%By Lemma \ref{thm:ito} we get
%\begin{align*}
%   M^v_t & =&
%\int_0^t \sum_{i} H^i_{u- } \left(\nabla v^i(Y_{u-})\right)^\top
%\sigma(u,   Y_{u-}, i) Q^{1/2} d W_u
%\\
%&  &  +
%\int_0^t
%\sum_{i}
%H^i_{u- }
% \int_{R^n} (v^i( Y_{u-} +  \sigma(u,Y_{u-}, i) x ) - v^i( Y_{u-})) \widetilde{\Pi} (du,dx)
%\\
%\nonumber & & + \int_0^t\sum_{i,j \neq i } (v^j( Y_{u-} +
%{\rho^{i,j}}(u,Y_{u-})) - v^i( Y_{u-}) ) H^{i}_{u-}d M^{i,j}_u ,
%\end{align*}
%where we use the notation $v^i (Y_{u-} ) := v(Y_{u-},i)$, and by
%$\nabla$ we denote  the vector of partial derivatives with respect
%to components of vector $s$. Assumption that $v$ has bounded
%derivatives yields that $M^v$ is martingale which is desired
%assertion.
\end{proof}
Theorem \ref{thm:markov-prop} suggests that for any arbitrary
solution to
SDE \eqref{eq:SDE-gen} the process $(Y,C)$ is a time inhomogenous
Markov process. We prove further,  that this suggestion is true.

%Now we give an estimation of moments of a solution to SDE \eqref{eq:SDE-gen}.
%\begin{thm}\label{prop:moment-sol}
%    Let  $Y$ be the component of a solution $(Y,C,N)$ to SDE
%\eqref{eq:SDE-gen}.
%    Assume the condition (LG) and there exist a constants $K_3$, $K_4$ and a natural number $m \geq  1$ such that
%    \begin{align}
%     \label{eq:LG-2}
%     \int_{\norm{x} > a } | F(t,y,c,x)|^{2} \nu(dx)  + \left|\rho^{c,k}(t, y) \right|^2 \leq K_3(1 + |y|^2),
%    \end{align}
%    \begin{align}
%        \label{eq:LG-2m}
%        \int_{ \mathbf{R}^n } | F(t,y,c,x)|^{2m} \nu(dx) \leq K_4(1 + |y|^{2m}),
%    \end{align}
%    for all $(t,y,c)$.
%    Then
%    \begin{align}\label{eq:sup-S2}
%        \mathbf{E} \left(\sup_{t \in [0,T]} |Y_t|^{2m} \right)< \infty.
%    \end{align}
%\end{thm}
%The proof of this theorem is given in the appendix.

In the sequel we  will use frequently the following technical result giving the  canonical decomposition of a special semimartingale $(v(t,Y_t,C_t))_{t \in [\![r,T]\!]}$, which plays a crucial role in what follows.
This result is a consequence of  It\^{o}'s lemma for general semimartingales (see \cite[Thm. II.7.32]{prot2004}).  Let us denote by  $C^{1,2}  =
C^{1,2}( [\![0,T]\!] \times \mathbf{R}^d  \times \mathcal{K})$   -
the  space of all measurable functions $v : [\![0,T]\!] \times
\mathbf{R}^d \times \mathcal{K} \rightarrow \mathbf{R} $ such that
$ v(\cdot, \cdot, k ) \in  C^{1,2}([\![0,T]\!] \times \mathbf{R}^d
)$  for every $ k \in \mathcal{K} $, and let $C^{1,2}_c$ be a set
of functions $f \in C^{1,2}$ with compact support.

\begin{thm}\label{thm:ito}
Let $(Y,C,W,\Pi,N)$ be a solution to SDE \eqref{eq:SDE-gen} on $[\![r,T]\!]$ and  $ v \in
C^{1,2}$  be a function such that the mapping
\begin{align}\label{eq:ito-formula-int-cond}
(t,y,c) \mapsto \int_{\mathbf{R}^n}\!\!\left| v( t, y  + F(t,y,c,x) ,c) - v( t, y,c) - \nabla v(t, y,c)\!F(t,y,c,x) \I_{\set{ \norm{x} \leq a
}}\right|\nu(dx)
\end{align}
is locally bounded (i.e., bounded on compact sets),
where $\nabla v$ denotes the vector of partial derivatives of $v$ with respect to components of $s$. Then the process $(v (t,Y_t, C_t))_{t \geq r}$
 is a special semimartingale with the following (unique) canonical decomposition 	
%the stochastic differential of the form
\begin{align}
\nonumber d v (t,Y_t, C_t) &= (\partial_t  + \mathcal{ A}_t ) v(t,Y_{t-} ,C_{t-}) dt
+ \sum_{i} H^i_{t- } \nabla v(t, Y_{t-},i) \sigma(t, Y_{t-}, i) d
W_t
\\
\label{eq:ito-formula} &    + \sum_{i} H^i_{t- }
 \int_{\mathbf{R}^n} (v(t , Y_{t-} +  F(t,Y_{t-}, i, x) ,i) - v(t , Y_{t-},i)) \widetilde{\Pi} (dx,dt)
\\
\nonumber &  + \sum_{i,j:j \neq i } (v(t , Y_{t-} +
{\rho^{i,j}}(t,Y_{t-}),j) - v(t , Y_{t-},i) ) H^{i}_{t-}d M^{i,j}_t,
\end{align}
where $H^i_t = 1_\set{i} (C_t) $ and $\mathcal{A}_t$ is defined by
\allowdisplaybreaks
\begin{align}
\label{eq:gener-SDE}&\mathcal{ A }_t v(y,c) :=
  \nabla v(t,y,c)\mu(t,y,c)
+ \frac{1}{2}
 \Tr \left( a(t,y,c) \nabla^2 v(t,y,c) \right)
\\ \nonumber
 & + \int_{\mathbf{R}^n}\left( v( t, y  + F(t,y,c,x) ,c)
- v( t,y,c) - \nabla v(t,y,c) F(t,y,c,x)\I_{\set{\norm{x} \leq a}}
\right)\nu(dx)
\\
\nonumber & + \sum_{k \in \mathcal{K} \setminus c } \left( v( t, y +
\rho^{c,k}(t, y),k) - v(t,y,c) \right)\lambda^{c,k}(t, y).
\end{align}
Here $a(t,y,c) :=  \sigma(t,y,c)  (\sigma(t,y,c))^{\!\top}$\!, and
by $Tr$ we denote the trace operator, %of a given square matrix
and $\nabla^2 v$ is the matrix of second derivatives of $v$ with respect to the
components of $s$.
\end{thm}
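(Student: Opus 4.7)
The plan is to apply Itô's formula for general semimartingales (Protter Thm. II.7.32, as hinted in the excerpt) to the process $v(t, Y_t, C_t)$, and then regroup the resulting terms by driving noise. The key structural fact is that the jumps of $(Y, C)$ come from pairwise disjoint sources, namely the Poisson random measure $\Pi$ and the counting processes $N^{i,j}$: by \eqref{eq:hip-A} and \eqref{eq:hip-B} no two of these jump simultaneously, so at each jump time exactly one source is active and only one of the three ``types'' of jump contributions appears. This disjointness is what allows the single sum over $\mathbf{R}^n$ and the single sum over pairs $(i,j)$ to be assembled without any correction terms.

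First I would collect the absolutely continuous (finite variation) part. The $\partial_t v$ piece together with $\nabla v \cdot \mu$ coming from the drift of $Y$ and the quadratic variation correction $\tfrac12 \Tr(a \nabla^2 v)$ from the $dW$ part are immediate. The continuous local-martingale piece is $\nabla v(t, Y_{t-}, C_{t-}) \sigma(t, Y_{t-}, C_{t-}) dW_t$, which I rewrite as $\sum_i H^i_{t-} \nabla v(t,Y_{t-},i)\sigma(t,Y_{t-},i) dW_t$ using $H^i_{t-} = \I_{\set{i}}(C_{t-})$.

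Next I would treat the jumps. A jump of $\Pi$ at $(u,x)$ leaves $C$ fixed and increments $Y$ by $F(u, Y_{u-}, C_{u-}, x)$, yielding a jump in $v$ equal to $v(u, Y_{u-} + F, C_{u-}) - v(u, Y_{u-}, C_{u-})$, which I write as a sum over $i$ with weights $H^i_{u-}$. A jump of $N^{i,j}$ sends $C$ from $i$ to $j$ and adds $\rho^{i,j}(u, Y_{u-})$ to $Y$, producing the jump $v(u, Y_{u-} + \rho^{i,j}(u, Y_{u-}), j) - v(u, Y_{u-}, i)$, again with the weight $H^i_{u-}$. After splitting the $\Pi$-integral into $\set{\norm{x}\leq a}$ and $\set{\norm{x}> a}$ pieces, I replace the small-jump part by a stochastic integral against $\widetilde{\Pi}$ plus the corresponding $\nu(dx) dt$-compensator; this is legitimate precisely because the hypothesis \eqref{eq:ito-formula-int-cond} ensures local integrability of the relevant integrand. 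Each $dN^{i,j}$-integral is split as $dM^{i,j} + \lambda^{i,j}(u, Y_{u-}) du$ using the martingale $M^{i,j}$ from \eqref{eq:mart-Mij}. Gathering all the $du$-terms into a single drift produces $(\partial_t + \mathcal{A}_t) v(t, Y_{t-}, C_{t-}) dt$ with $\mathcal{A}_t$ as in \eqref{eq:gener-SDE}.

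For uniqueness of the decomposition, I would note that the drift part is absolutely continuous hence predictable, and the remaining pieces are (at least local) martingales because the required compensation has been done under the hypothesis \eqref{eq:ito-formula-int-cond}; by the classical uniqueness of the canonical decomposition of a special semimartingale, \eqref{eq:ito-formula} is the canonical decomposition. The main obstacle is precisely the bookkeeping around the small-jump compensation: one must verify that splitting off $\int_{\norm{x}\leq a} (v(\cdot + F) - v(\cdot) - \nabla v \cdot F)\nu(dx) du$ as a drift, and leaving $\int_{\norm{x}\leq a}(v(\cdot + F) - v(\cdot))\widetilde{\Pi}(dx, dt)$ as a (local) martingale, is legitimate, and that the resulting drift is locally bounded so that $v(\cdot, Y, C)$ is indeed \emph{special}. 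The local boundedness hypothesis \eqref{eq:ito-formula-int-cond} is tailored exactly to this step.
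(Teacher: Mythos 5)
Your computation of the individual terms is correct and the overall strategy---identify the drift, the $dW$ part, the compensated $\Pi$-jumps and the compensated $N^{i,j}$-jumps, using \eqref{eq:hip-A} and \eqref{eq:hip-B} to keep the jump sources disjoint, and \eqref{eq:ito-formula-int-cond} to justify the compensation---is exactly the one the paper uses. However, your opening step has a genuine gap: you cannot apply It\^{o}'s formula (Protter Thm.~II.7.32) directly to $v(t,Y_t,C_t)$, because that theorem requires the function to be $C^2$ in all space variables of the semimartingale, whereas here $v(t,y,c)$ is only $C^{1,2}$ in $(t,y)$ for each \emph{fixed} $c\in\mathcal{K}$ and is not even defined for non-integer values of the last argument. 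The fact that $C$ is a finite-variation pure-jump process makes the formula ``morally'' hold (no $\partial_c v$ term survives), but that is precisely what has to be proved. The paper's appendix circumvents this by writing $v(t,Y_t,C_t)=\sum_i v(t,Y_t,i)H^i_t$ with $H^i_t=\I_{\set{i}}(C_t)$, applying the integration-by-parts (product) formula to each summand, and then using It\^{o} only on $t\mapsto v(t,Y_t,i)$ for fixed $i$, which \emph{is} a $C^{1,2}$ function of $(t,Y_t)$; the cross terms $\sum_i v(t,Y_{t-},i)\,dH^i_t$ and $\sum_i \Delta v(t,Y_t,i)\Delta H^i_t$ are then computed explicitly from $dH^i_t=\sum_{j\neq i}(dH^{j,i}_t-dH^{i,j}_t)$ and from $\Delta Y_t\,\Delta H^{i,j}_t=\rho^{i,j}(t,Y_{t-})\Delta H^{i,j}_t$. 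Either this device, or an explicit $C^2$ extension of $v$ to $\mathbf{R}^{d+1}$ in the $c$-variable (as the paper does later in the proof of Theorem \ref{thm:semimart-dec}), is needed to make your first line rigorous.

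A second, minor imprecision: you describe compensating only the small-jump part $\set{\norm{x}\leq a}$ of the $\Pi$-integral, but in \eqref{eq:ito-formula} the $\widetilde{\Pi}$-integral runs over all of $\mathbf{R}^n$. The large-jump contribution $\int_{\norm{x}>a}\bigl(v(t,Y_{t-}+F,i)-v(t,Y_{t-},i)\bigr)\Pi(dx,dt)$ must also be split into a $\widetilde{\Pi}$-martingale plus its $\nu(dx)\,dt$-compensator, which is why the integral in $\mathcal{A}_t$ in \eqref{eq:gener-SDE} is over all of $\mathbf{R}^n$ with the indicator $\I_{\set{\norm{x}\leq a}}$ attached only to the gradient term; the hypothesis \eqref{eq:ito-formula-int-cond} is stated over all of $\mathbf{R}^n$ precisely to license this. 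Once these two points are repaired, your argument coincides with the paper's.
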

\noindent The proof of this technical result is given in the
appendix. From the above theorem we obtain a nice martingale property:
\begin{prop}\label{prop:mart-prob}
%Assume (LG) and that   there exists a
%      constant $K_3$ such that
%\begin{align}
%     \label{eq:LG-2}
%     \int_{\norm{x} > a } | F(t,y,c,x)|^{2} \nu(dx)  + \big|\rho^{c,k}(t, y) \big|^2 \leq K_3(1 + |y|^2),
%    \end{align}
%         for all $(t,y,c), k \in \mathcal{K}, k \neq c $.
%%\eqref{eq:LG-rho},
%%Let $\mu$, $\sigma$ be locally bounded and the function $F$ be
%%such that
%%\begin{align}\label{eq:LG-F2}
%%\int_{ \mathbf{R}^n } | F(t,y,c,x)|^{2} \nu(dx) \leq L(1 + |y|^{2}),
%%\end{align}
%%the mapping
%%\begin{align}\label{eq:cond-F}
%%    (t,y,c) \rightarrow \int_{\norm{x} \leq a } |F(t,y,c,x)|^2 \nu(dx)
%%\end{align}
%%is locally bounded.
Let $(Y,C)$ be components of a solution to SDE
\eqref{eq:SDE-gen} on $[\![r,T]\!]$. Then for any $f \in C^{1,2}_c$ the process
    \[
        M^f_t   := f(Y_t, C_t)  - \int_r^t \mathcal{A}_u f( Y_{u-}, C_{u-})  du
    \]
is an $\mathbb{F}$--local martingale on $[\![r,T]\!]$.
\end{prop}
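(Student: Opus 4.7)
The plan is to apply Theorem~\ref{thm:ito} to the $t$-independent function $v(t,y,c) := f(y,c) \in C^{1,2}$ and then to argue that the martingale part produced by the formula is a local martingale after localizing by the first exit time of $Y$ from a large ball. The proof divides into three steps: verify the technical hypothesis \eqref{eq:ito-formula-int-cond} for $v = f$; read off the decomposition from \eqref{eq:ito-formula} (noting that $\partial_t v \equiv 0$); and show that each of the three stochastic integrals appearing there is a local martingale.

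For the hypothesis I split the $\nu$-integral at $\norm{x} = a$. On $\set{\norm{x} \leq a}$ a second-order Taylor expansion in the spatial variable gives
\[
\abs{f(y+F(t,y,c,x),c) - f(y,c) - \nabla f(y,c) F(t,y,c,x)} \leq \tfrac{1}{2}\norm{\nabla^2 f}_\infty \abs{F(t,y,c,x)}^2,
\]
so that part of the integral is bounded by a constant times $\int_{\norm{x}\leq a} \abs{F(t,y,c,x)}^2 \nu(dx)$, which is locally bounded by \eqref{eq:F2-loc-bound}. On $\set{\norm{x}>a}$, boundedness of $f$ together with $\nu(\set{\norm{x}>a})<\infty$ (from \eqref{eq:nu-int-cond}) gives a uniform bound $2\norm{f}_\infty \nu(\set{\norm{x}>a})$. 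With \eqref{eq:ito-formula-int-cond} verified, Theorem~\ref{thm:ito} produces
\[
f(Y_t,C_t) - f(Y_r,C_r) - \int_r^t \mathcal{A}_u f(Y_{u-},C_{u-}) du = I^W_t + I^{\widetilde\Pi}_t + I^M_t,
\]
where $I^W$, $I^{\widetilde\Pi}$, $I^M$ denote the stochastic integrals against $W$, $\widetilde\Pi$, and the $M^{i,j}$ respectively in \eqref{eq:ito-formula}.

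To conclude I localize by $\tau_n := \inf\set{t \in [\![r,T]\!] : \abs{Y_t} \geq n} \wedge T$, which increases to $T$ almost surely. On $[\![r,\tau_n]\!]$ the process $Y_{t-}$ is bounded by $n$, so the integrand $\sum_i H^i_{t-} \nabla f(Y_{t-}, i) \sigma(t, Y_{t-}, i)$ of $I^W$ is bounded (using boundedness of $\nabla f$ and local boundedness of $\sigma$), hence $I^{W,\tau_n}$ is an $L^2$-martingale. For $I^{\widetilde\Pi}$, the same split-at-$a$ estimate (with $\norm{\nabla f}_\infty \abs{F}$ on small jumps and $2\norm{f}_\infty$ on large jumps) shows that the predictable quadratic variation of the stopped integral is bounded, so $I^{\widetilde\Pi,\tau_n}$ is an $L^2$-martingale. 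Finally, each $M^{i,j}$ is itself a square-integrable martingale since its compensator $\int_r^\cdot \lambda^{i,j}(u,Y_{u-}) du$ is bounded by $(\mathbf{\Lambda})$; combined with the $2\norm{f}_\infty$ bound on the integrand of $I^M$, this makes $I^{M,\tau_n}$ an $L^2$-martingale as well. Hence $M^f$ is an $\mathbb{F}$-local martingale on $[\![r,T]\!]$.

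The one mildly subtle bookkeeping point is that SDE \eqref{eq:SDE-gen} treats small and large jumps separately (compensated versus uncompensated), while Theorem~\ref{thm:ito} reassembles them into a single $\widetilde\Pi$-integral over all of $\mathbf{R}^n$. Square-integrability of the combined integrand with respect to $\nu(dx) du$ follows precisely from the small-jump Taylor bound plus the finiteness of $\nu$ on $\set{\norm{x}>a}$, i.e.\ exactly the estimates used in verifying the Itô hypothesis, so no additional work is needed beyond a consistent application of those bounds.
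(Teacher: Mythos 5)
Your proposal is correct and follows essentially the same route as the paper: verify the integrability hypothesis \eqref{eq:ito-formula-int-cond} from the local boundedness of \eqref{eq:F2-loc-bound} (plus boundedness of $f$ and finiteness of $\nu$ off a neighbourhood of the origin), invoke Theorem~\ref{thm:ito}, and observe that the remaining terms are local martingales. The paper states this in two lines; you have merely spelled out the Taylor estimate and the localization, which is consistent with the intended argument.
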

\begin{proof}
Local boundedness of function \eqref{eq:F2-loc-bound} implies \eqref{eq:ito-formula-int-cond} for $f \in C^{2}_c$,
so \eqref{eq:ito-formula} holds. Therefore the process $M^f$ is a
local martingale.
%Since $f \in C^{1,2}_c$ then from
%\eqref{eq:LG-2} and (LG) follows that
%\[
%\left|\mathcal{A} f (t,y,c) \right| \leq K_f (1 + |y|^2).
%\]
%and also that \eqref{eq:sup-S2} holds for $m=1$. This implies that
%$M^f$ has integrable supremum and therefore it is a martingale.
\end{proof}
%\textcolor[rgb]{1.00,0.00,0.00}{
%JACKU: czy my naprawde potrzebujemy \eqref{eq:LG-2} do \eqref{eq:ito-formula-int-cond} ?
%Wydaje mi sie ze \eqref{eq:LG-2} jest bardzo na wyrost
%poniewaz  dla $v$ ciaglych o nosniku zwartym mamy
%\begin{align*}
%\int_{\set{ \norm{x} > a
%}}\!\!\left| v( t, y  + F(t,y,c,x) ,c) - v( t,
%y,c) \right|\nu(dx)\! \leq
%2 \norm{v}_\infty \nu(\set{ \norm{x} > a
%}) < \infty
%\end{align*}
%oraz z (LG) mamy
%\begin{align*}
%& \int_{\set{ \norm{x} \leq a
%}}\!\!\left| v( t, y  + F(t,y,c,x) ,c) - v( t,
%y,c) - \nabla v(t, y,c)\!F(t,y,c,x) \right|\nu(dx)\!  \\
%& \leq
%\norm{\nabla^2 v}^2
%\int_{\set{ \norm{x} \leq a
%}}\!\!\left| F(t,y,c,x) \right|^2\nu(dx)\!
%\leq \norm{\nabla^2 v}^2
%K (1 + |y|^2) < \infty
%\end{align*}
%W istocie nawet warunek LG jest zbyt mocny bo mozna zamiast niego
%zalozyc
%ze
%\[
%(t,y,c) \longmapsto
%\int_{\set{ \norm{x} \leq a
%}}\!\!\left| F(t,y,c,x) \right|^2\nu(dx)\!
%\]
%jest lokalnie ograniczone (ograniczone na zbiorach zwartych),
%ta sama uwaga dotyczy wspolczynnikow $\mu$, $\sigma$ tzn. ze jezeli mamy rozwiazanie naszego SDE (nieeksplodujace) to wystarczy lokalna ograniczonosc
%wspolczynnikow dla lokalnej martyngalowosci $M^v$.
%}
Since $M^f$ is $\mathbb{F}^{Y,C}$-adapted, we obtain immediately
\begin{cor}\label{cor:mart-prob}
Under assumptions of Proposition \ref{prop:mart-prob} the process
$M^f$ is an $\mathbb{F}^{Y,C}$--local martingale for any $f \in
C^{2}_c$.
\end{cor}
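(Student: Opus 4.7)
The plan is to deduce the corollary from Proposition \ref{prop:mart-prob} by a localization-and-conditioning argument. The only subtlety is that the localizing sequence produced by the proposition consists of $\mathbb{F}$-stopping times, which need not be $\mathbb{F}^{Y,C}$-stopping times; the sole obstacle is therefore to exhibit a localizing sequence intrinsic to $\mathbb{F}^{Y,C}$.

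First I would observe that $M^f$ is $\mathbb{F}^{Y,C}$-adapted and c\`{a}dl\`{a}g, since $f(Y_t,C_t)$ depends pointwise on $(Y,C)$ and the drift $\int_r^t \mathcal{A}_u f(Y_{u-},C_{u-})\,du$ depends only on the path of $(Y,C)$ on $[\![r,t]\!]$. I would then introduce
\[
\sigma_n := \inf\set{ t \in [\![r,T]\!] : \abs{M^f_t} \geq n } \wedge T,
\]
which is an $\mathbb{F}^{Y,C}$-stopping time satisfying $\sigma_n \uparrow T$ almost surely, because each c\`{a}dl\`{a}g path of $M^f$ is bounded on $[\![r,T]\!]$.

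Next I would exploit the compactness of $\mathrm{supp}\,f$ to bound the jumps of $M^f$: since $f$ is bounded,
\[
\abs{\Delta M^f_t} = \abs{f(Y_t,C_t) - f(Y_{t-},C_{t-})} \leq 2\|f\|_\infty,
\]
so the stopped process $(M^f)^{\sigma_n}$ is bounded by $n + 2\|f\|_\infty$. Being a stopped $\mathbb{F}$-local martingale that is uniformly bounded, $(M^f)^{\sigma_n}$ is therefore a true $\mathbb{F}$-martingale.

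Finally, since $(M^f)^{\sigma_n}$ is $\mathbb{F}^{Y,C}$-adapted and $\mathcal{F}^{Y,C}_s \subseteq \mathcal{F}_s$, the tower property would give, for any $r \leq s \leq t \leq T$,
\[
\mathbf{E}\big( (M^f)^{\sigma_n}_t \mid \mathcal{F}^{Y,C}_s \big) = \mathbf{E}\big( \mathbf{E}( (M^f)^{\sigma_n}_t \mid \mathcal{F}_s ) \mid \mathcal{F}^{Y,C}_s \big) = (M^f)^{\sigma_n}_s,
\]
so each $(M^f)^{\sigma_n}$ is an $\mathbb{F}^{Y,C}$-martingale and $(\sigma_n)$ witnesses that $M^f$ is an $\mathbb{F}^{Y,C}$-local martingale on $[\![r,T]\!]$.
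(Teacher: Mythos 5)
Your proof is correct and follows essentially the same route as the paper, whose entire argument is the remark that $M^f$ is $\mathbb{F}^{Y,C}$-adapted so that the corollary follows ``immediately'' from Proposition \ref{prop:mart-prob}; you have simply supplied the standard details behind that deduction (an intrinsic localizing sequence, boundedness of the stopped process via the bound $\abs{\Delta M^f_t}\leq 2\norm{f}_\infty$ coming from the compact support of $f$ and the continuity of the drift, and the tower property). The only fine print, which you share with the paper, is that $\sigma_n$ is a stopping time for the raw natural filtration of $(Y,C)$ only modulo the usual right-continuity/completeness conventions on filtrations, which is routinely taken for granted here.
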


\section{Moment estimates}

In this section we prove  finiteness of $2m$-moments of
$\sup_{t \in [\![0,T]\!]} |Y_t|$, which  is the crucial fact in the proof of uniqueness in law of solutions to SDE \eqref{eq:SDE-gen}.
We stress that in this section we do not assume that $(Y,C)$ solves SDE but only that $(Y,C)$ solves a local martingale problem corresponding to the generator of \eqref{eq:SDE-gen}.
Throughout
this section we make the following additional mild assumption: \\
\textbf{(LB)} The mapping
    \begin{align}\label{eq:loc-bound}
        (t,y,c) \rightarrow \int_{ |F(t,y,c,x)| > 1 } |F(t,y,c,x)| \nu(dx)
    \end{align}
    is locally bounded.
%
%The following theorem which gives finiteness of $2m$-moments of
%$\sup_{t \in [0,T]} |Y_t|$

In what follows we use
the notation $z^{\!\top} =(z^{\!\top}_1, z_2)$, where
$z_1 \in \mathbf{R}^d $ corresponds to coordinates of $Y$ and $z_2 \in
\mathbf{R}$ corresponds to $C$.
Let us introduce the following functions
\begin{align}
\label{eq:tilde-b}
&\widetilde{b}(t,y,c) := \mu(t,y,c) +
 \!\! \int_{\mathbf{R}^n} F(t,y,c,x)\left( \I_{\set{\norm{F(t,y,c,x)} \leq 1}} -
 \I_{\set{\norm{x}\leq a}} \right) \nu(dx),
 \\
\nonumber
 &a(u,y,c) = \sigma \sigma^{\top} (u,y,c),
\end{align}
and measure $\overline{\nu}(t,y,c, \cdot )$  on $\mathbb{R}^d \times \mathbb{R}$ defined by
\begin{align}\label{eq:komp-sc}
\overline{\nu}(t,y,c,dz_1, d z_2) &:=
\nu_F(t,y,c, d z_1) \otimes \delta_\set{0}(dz_2) \\
\nonumber & \quad + \I_\mathcal{K}(c)\sum_{ k \in \mathcal{K} \setminus \set{c}}
\lambda^{c,k}(t, y) \delta_\set{ (\rho^{c,k}(t, y) ,
k-c)} (dz_1, dz_2),
\end{align}
where
$\nu_F(t,y,c,\cdot)$ is a measure defined for $A \in \mathcal{B}(\mathbf{R}^d)$  by setting
\[
    \nu_F( t,y,c, A ) := (\nu \circ F^{-1}_{t,y,c})(A)
    =
    \nu(\set{x : F(t,y,c,x) \in A}),
\]
 $\delta_a$ denotes Dirac measure at $a$.
\begin{thm}\label{thm:semimart-dec}
    Assume that the law of a process $(Y,C)$ solves the local martingale problem for $(\mathcal{A}_t)_{t \in [\![r,T]\!]}$ given by \eqref{eq:gener-SDE}.
	Then \\
i) The process $(Y,C)$  is a semimartingale with the characteristics $(\widetilde{B}, \widetilde{C}, \widetilde{\nu})$, with respect to the truncation
function $h(z) := z
\I_\set{ \norm{z_1} +  2|z_2| \leq 1 }$, of the form
\begin{align*}
    \widetilde{B}_t = \int_{0}^t \left[
                       \begin{array}{c}
                         \widetilde{b}(u,Y_{u-},C_{u-}) \\
                         0 \\
                       \end{array}
                     \right]
    du,
    \qquad
    \widetilde{C}_t = \int_{0}^t
    \left[
    \begin{array}{cc}
      a(u, Y_{u-}, C_{u-}) & \mathbf{0} \\
      \mathbf{0}^\top & 0
    \end{array}
    \right]
    du,
\end{align*}
\begin{align}\label{eq:komp-sc-exact}
    \widetilde{\nu}([\![0,t]\!]\times A_1 \times A_2)
    :=
    \int_{]\!]0,t]\!]} \int_{A_1 \times A_2} \overline{\nu}(u,Y_{u-}, C_{u-}, dz_1, d z_2) du.
\end{align}
%\begin{align*}
%    \left[
%    \begin{array}{c}
%      Y_t \\
%      C_t \\
%    \end{array}
%    \right]
%    =
%    \left[
%    \begin{array}{c}
%      Y_0 \\
%      C_0 \\
%    \end{array}
%    \right]
%    +
%    \tilde{B}_t
%    +
%    \left[
%    \begin{array}{c}
%      Y^c_t \\
%      0 \\
%    \end{array}
%    \right]
%    +
%    \int_{]\!]0,t]\!]} \int_{\mathbf{R}^{d+1}}
%    \left[
%    \begin{array}{c}
%      h_1(z) \\
%      h_2(y) \\
%    \end{array}
%    \right]
%    \widetilde{\mu}(dt,dz)
%    +
%    \int_{]\!]0,t]\!]} \int_{\mathbf{R}^{d+1}}
%    \left[
%    \begin{array}{c}
%      z_1 - h_1(z) \\
%      z_2 - h_2(y)\\
%    \end{array}
%    \right]
%    \mu(dt,dz)
%\end{align*}
ii) The process $(Y,C)$ has the following decomposition
\begin{align*}
    \left[
    \begin{array}{c}
      Y_t \\
      C_t \\
    \end{array}
    \right]
    =
    \left[
    \begin{array}{c}
      Y_0 \\
      C_0 \\
    \end{array}
    \right]
    +
    \tilde{B}_t
    +
    \left[
    \begin{array}{c}
      Y^c_t \\
      0 \\
    \end{array}
    \right]
    +
    \int_{]\!]0,t]\!]} \int_{\mathbf{R}^{d+1}}
    \!\!\!
      h(z)
    \widetilde{\mu}(du,dz)
    +
    \int_{]\!]0,t]\!]} \int_{\mathbf{R}^{d+1}}
    \!\!\!
      (z - h(z))
    \overline{\mu}(du,dz),
\end{align*}
where $Y^c$ is the martingale continuous part of $Y$,
$\overline{\mu}(dt,dz)$ is the measure associated with jumps of $(Y,C)$, and
$\widetilde{\mu}(dt,dz) := \overline{\mu}(dt,dz) -
\overline{\nu}(dz)dt$ is the compensated measure of jumps.
\end{thm}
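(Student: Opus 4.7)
The plan is to apply the Jacod--Shiryaev characterization of semimartingale characteristics via a martingale problem (Jacod and Shiryaev, \emph{Limit Theorems for Stochastic Processes}, II.2.42): $(Y,C)$ is a semimartingale with characteristics $(\widetilde{B},\widetilde{C},\widetilde{\nu})$ relative to the truncation $h$ of the statement if and only if, for every bounded $f \in C^2(\mathbf{R}^{d+1})$, the process $f(Y_t,C_t)-f(Y_0,C_0)-\int_0^t \mathcal{L}_u f(Y_{u-},C_{u-})\,du$ is a local martingale, where $\mathcal{L}_u$ is the L\'evy--Khintchine-type operator built from the candidate characteristics. So (i) reduces to (a) verifying $\mathcal{A}_u \equiv \mathcal{L}_u$ on a suitable class of test functions, and (b) extending the martingale problem from $C^2_c(\mathbf{R}^d\times\mathcal{K})$ to bounded $C^2$ functions on $\mathbf{R}^{d+1}$. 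Part (ii) will then follow from the canonical representation of a semimartingale (JS II.2.34).

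The algebraic identity $\mathcal{A}_u \equiv \mathcal{L}_u$ is the main computation. I would split $\overline{\nu}$ as in \eqref{eq:komp-sc}, change variables $z_1=F(u,y,c,x)$ in the first piece to replace the integral against $\nu_F$ by an integral against $\nu$, and evaluate the second (atomic) piece to recover the $\lambda^{c,k}$-sum. The crucial point that makes the $\partial_c f$-term never appear is the specific choice of truncation $h(z)=z\,\I_{\set{\norm{z_1}+2|z_2|\leq 1}}$: for every switching atom one has $|z_2|=|k-c|\geq 1$, hence $h(z)=0$ there; and on the continuous-noise piece $z_2=0$, so the second coordinate of $h$ vanishes as well. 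What remains from the truncation in $\mathcal{L}_u$ is the term $F\,\I_{\set{\norm{F}\leq 1}}\,\nabla_y f$, which differs from the $F\,\I_{\set{\norm{x}\leq a}}\,\nabla_y f$ appearing in $\mathcal{A}_u$ by exactly the correction integral absorbed into $\widetilde{b}$ via \eqref{eq:tilde-b}. Assumption (LB), combined with $\nu(\set{\norm{x}>a})<\infty$, guarantees that this correction, and hence $\widetilde{b}$, is locally bounded and that all integrals involved are finite.

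Once $\mathcal{A}_u=\mathcal{L}_u$ is established, the local martingale problem hypothesis yields the local martingale property for every $f \in C^2_c(\mathbf{R}^d\times\mathcal{K})$. To extend to arbitrary bounded $f\in C^2(\mathbf{R}^{d+1})$ as required by JS II.2.42, I use localization: pick cutoffs $\chi_n \in C^2_c$ with $\chi_n\equiv 1$ on the ball of radius $n$, apply the martingale problem to $\chi_n f$ restricted to $\mathbf{R}^d\times\mathcal{K}$, and pass to the stopping times $\tau_n := \inf\set{t\geq r : \norm{Y_t}\geq n}\wedge T$; sending $n\to\infty$ and using the local boundedness of the coefficients gives the claim. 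Then JS II.2.42 identifies $(\widetilde{B},\widetilde{C},\widetilde{\nu})$ as the characteristics of $(Y,C)$, proving (i). Part (ii) is then immediate from the canonical decomposition of a semimartingale (JS II.2.34) applied to $(Y,C)$, upon noting that the continuous martingale part of $C$ vanishes because $C$ is $\mathcal{K}$-valued and hence pure-jump, which is why the second row/column of the displayed continuous-martingale vector is zero.

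The main obstacle is the operator-matching computation: the bookkeeping of which truncation is active on which piece of the jump measure must be done carefully, and one has to verify that no spurious $\partial_c f$ contribution survives. Everything else is a routine invocation of standard results from stochastic calculus and a standard cutoff argument.
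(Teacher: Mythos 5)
Your proposal follows essentially the same route as the paper: both match $\mathcal{A}_t$ to the L\'evy--Khintchine operator built from $(\widetilde{b},a,\overline{\nu})$ with the truncation $h$ (absorbing the discrepancy between $F\,\I_{\set{\norm{x}\le a}}$ and $F\,\I_{\set{\norm{F}\le 1}}$ into $\widetilde{b}$, and noting that $h$ vanishes on the switching atoms since $2|k-c|\ge 2$), extend test functions from $\mathbf{R}^d\times\mathcal{K}$ to $\mathbf{R}^{d+1}$, and then invoke the Grigelionis--Mikulevicius/Jacod--Shiryaev II.2.42 characterization followed by II.2.34 for the canonical decomposition. The only cosmetic difference is your explicit cutoff/localization step to reach bounded $C^2$ test functions, which the paper subsumes in its citation of that theorem for $C^2_c(\mathbf{R}^{d+1})$.
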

\begin{proof}
By our assumption,  for   $v \in C^{2}_c$, % we have that
the   function
\begin{align*}
I_v(t,y,c):=\int_{\mathbf{R}^n}\left( v( y  + F(t,y,c,x) ,c) -
v(y,c) - \nabla v(y,c)
F(t,y,c,x)\I_{\set{\norm{F(t,y,c,x)}\leq 1}} \right)\nu(dx)
\end{align*}
is bounded on compact sets. We can rewrite the generator
$\mathcal{ A }_t $ in the   form
\begin{align*}
&\mathcal{ A }_t v(y,c) =
%\partial_t v(t,y,c) +
\nabla v(y,c)\widetilde{b}(t,y,c)
+ \frac{1}{2}
 \Tr \left( a(t,y,c) \nabla^2 v(y,c) \right)
\\ \nonumber
 & + \int_{\mathbf{R}^n}\left( v( y  + F(t,y,c,x) ,c)
- v(y,c) - \nabla v(y,c)
F(t,y,c,x)\I_{\set{\norm{F(t,y,c,x)} \leq 1}} \right)\nu(dx)
\\
\nonumber & + \sum_{k \in \mathcal{K} \setminus c } \left( v(
s + \rho^{c,k}(t, y),k) - v(y,c) \right)\lambda^{c,k}(t, y),
\end{align*}
where $\widetilde{b}$ is given by \eqref{eq:tilde-b}. Note that $\widetilde{b}$ is also bounded on compact sets. Note that any function $v \in C^{2}_c(
\mathbf{R}^{d} \times \mathcal{K})$   can be  extended  to a
function %$\widetilde{v}$ in such a way that
$\widetilde{v} \in
C^{2}_c(\mathbf{R}^{d+1} )$.
Therefore, we can extend $\mathcal{A}_t$ to an operator $\widetilde{\mathcal{ A }}_t$  acting on functions $ v \in
C^{2}_c(\mathbf{R}^{d+1})$
by formula
\begin{align*}
&\widetilde{\mathcal{ A }} v(y,c) =
\nabla v(y,c) \widetilde{b}(t,y,c)
+ \frac{1}{2}
 \Tr \left( a(t,y,c) \nabla^2 v(y,c) \right)
\\ \nonumber
 & + \int_{\mathbf{R}^{n+1}}\left( v( y + z_1 , c+ z_2)
- v(y,c) -  \nabla_{y,c} v(y,c) h(z)  \right)
\overline{\nu}(t,y,c,dz)
\end{align*}
where $\overline{\nu}$ is given by \eqref{eq:komp-sc} and for $z^{\!\top}=(z_1^{\!\top},z_2)$, $z_1 \in
\mathbf{R}^d$, $z_2 \in \mathbf{R}$,
 \[ h(z) := z
\I_\set{ \norm{z_1} +  2|z_2| \leq 1 }, \qquad and \qquad \nabla_{y,c}
v(y,c) := \left[ \begin{array}{c}
                           \nabla v(y,c), \
                           \partial_c v(y,c)
                         \end{array}
                         \right].
\]
Indeed, this formula gives the extension
since by simple calculation we can see that for every $c
\in \mathcal{K}$,
\[
 \widetilde{\mathcal{A}}_t \widetilde{v}(y,c) =  \mathcal{A}_t
 v(y,c),
\]
for any extension $\widetilde{v}$ of $v$.
Hence, by our assumption, we have that the process
\[
    M^v_t := v(Y_t,C_t) - v(Y_0,C_0) - \int_{0}^t \widetilde{\mathcal{A}}_u v(Y_{u-},C_{u-})du
\]
is a local martingale for each $v \in
C^{2}_c(\mathbf{R}^{d+1} )$. Applying Theorem 1 from Griegolionis and Mikulevicius \cite{gregmik1981} (cf. Jacod and Shirayev \cite[Theorem II.2.42]{js1987}) we see that $(Y,C)$ is a semimartingale
with the characteristics, calculated with respect to the truncation
function $h$, of the form
\begin{align*}
    \widetilde{B}_t = \int_{0}^t \left[
                       \begin{array}{c}
                         \widetilde{b}(u,Y_{u-},C_{u-}) \\
                         0 \\
                       \end{array}
                     \right]
    du,
    \qquad
    \widetilde{C}_t = \int_{0}^t
    \left[
    \begin{array}{cc}
      a(u, Y_{u-}, C_{u-}) & \mathbf{0} \\
      \mathbf{0}^\top & 0
    \end{array}
    \right]
    du,
\end{align*}
\begin{align*}
    \widetilde{\nu}([\![0,t]\!]\times A_1 \times A_2)
    :=
    \int_{]\!]0,t]\!]} \int_{A_1 \times A_2} \overline{\nu}(u,Y_{u-}, C_{u-}, dz_1, d z_2) du.
\end{align*}
Therefore, by  Theorem II.2.34 \cite{js1987}, the process $(Y,C)$ has the canonical representation of
the form
%\begin{align*}
%    \left[
%    \begin{array}{c}
%      Y_t \\
%      C_t \\
%    \end{array}
%    \right]
%    =
%    \left[
%    \begin{array}{c}
%      Y_0 \\
%      C_0 \\
%    \end{array}
%    \right]
%    +
%    \tilde{B}_t
%    +
%    \left[
%    \begin{array}{c}
%      Y^c_t \\
%      0 \\
%    \end{array}
%    \right]
%    +
%    \int_{]\!]0,t]\!]} \int_{\mathbf{R}^{d+1}}
%    \left[
%    \begin{array}{c}
%      h_1(z) \\
%      h_2(y) \\
%    \end{array}
%    \right]
%    \widetilde{\mu}(dt,dz)
%    +
%    \int_{]\!]0,t]\!]} \int_{\mathbf{R}^{d+1}}
%    \left[
%    \begin{array}{c}
%      z_1 - h_1(z) \\
%      z_2 - h_2(y)\\
%    \end{array}
%    \right]
%    \mu(dt,dz)
%\end{align*}
\begin{align*}
    \left[
    \begin{array}{c}
      Y_t \\
      C_t \\
    \end{array}
    \right]
    =
    \left[
    \begin{array}{c}
      Y_0 \\
      C_0 \\
    \end{array}
    \right]
    +
    \tilde{B}_t
    +
    \left[
    \begin{array}{c}
      Y^c_t \\
      0 \\
    \end{array}
    \right]
    +
    \int_{]\!]0,t]\!]} \int_{\mathbf{R}^{d+1}}
    \!\!\!
      h(z)
    \widetilde{\mu}(du,dz)
    +
    \int_{]\!]0,t]\!]} \int_{\mathbf{R}^{d+1}}
    \!\!\!
      (z - h(z))
    \overline{\mu}(du,dz)
\end{align*}
where $Y^c$ is the martingale continuous part of $Y$,
$\overline{\mu}(dt,dz)$ is the measure of jumps of $(Y,C)$, and
\[
\widetilde{\mu}(dt,dz) := \overline{\mu}(dt,dz) -
\overline{\nu}(t,Y_{t-},C_{t-},dz)dt
 \]
 is the compensated measure of jumps. We use,
as before, the notation $z^{\!\top} =(z^{\!\top}_1, z_2)$, where
$z_1 \in \mathbf{R}^d $ corresponds to jumps of $Y$ and $z_2 \in
\mathbf{R}$ corresponds to jumps of $C$.
\end{proof}

\begin{thm}\label{prop:moment-mp}
    Assume that the law of a process $(Y,C)$ solves the local martingale problem for $(\mathcal{A}_t)_{t \in [\![r,T]\!]}$.
    Let $m$ be a natural number.     If the coefficients of $(\mathcal{A}_t)_{t \in [\![r,T]\!]}$ satisfy (LB), (LG),
and that   there exist
      constants $K_3$ and  $K_4$  such that for all $(t,y,c)$
\begin{align}
     \label{eq:LG-2}
     \int_{\norm{x} > a } | F(t,y,c,x)|^{2} \nu(dx)  + \big|\rho^{c,k}(t, y) \big|^2 \leq K_3(1 + |y|^2), \quad for \ k \in \mathcal{K}, k \neq c,
    \end{align}
 and
    \begin{align}
        \label{eq:LG-2m}
        \int_{ \mathbf{R}^n } | F(t,y,c,x)|^{2m} \nu(dx) \leq K_4(1 + |y|^{2m}),
        %,
%        \\
%        \label{eq:LG-2m-r}
%     \sum_{c, k \in \mathcal{K}: k \neq c }  \big|\rho^{c,k}(t, y) \big|^2 \leq K_3(1 + |y|^2)
    \end{align}
     then
    \begin{align}\label{eq:sup-S2}
        \mathbf{E} \left(\sup_{t \in [\![r,T]\!]} |Y_t|^{2m} \right)< \infty.
    \end{align}
    \end{thm}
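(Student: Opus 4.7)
The plan is to apply It\^o's formula to the Lyapunov function $V(y):=(1+|y|^2)^m$, which belongs to $C^2(\mathbf{R}^d)$ with $|\nabla V(y)|\le C_m V(y)^{(2m-1)/(2m)}$ and $\|\nabla^2 V(y)\|\le C_m V(y)^{(m-1)/m}$, using the canonical semimartingale decomposition of $(Y,C)$ provided by Theorem~\ref{thm:semimart-dec}. Since $V$ is not compactly supported, I would localize with the stopping times $\tau_n:=\inf\{t\in[\![r,T]\!]:|Y_t|\ge n\}\wedge T$; It\^o's formula for semimartingales then yields
\[
V(Y_{t\wedge\tau_n})=V(y)+\int_r^{t\wedge\tau_n}\widetilde{\mathcal{A}}_u V(Y_{u-},C_{u-})\,du+N_{t\wedge\tau_n},
\]
where $N$ is a local martingale, genuine on $[\![r,t\wedge\tau_n]\!]$ since the stopped process is bounded. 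The strategy is to prove the pointwise bound $|\widetilde{\mathcal{A}}_t V(y,c)|\le K(1+V(y))$; a Gronwall argument will then give $\mathbf{E}[V(Y_{t\wedge\tau_n})]\le K_T(1+V(y))$, a Burkholder--Davis--Gundy (BDG) step will promote this to a bound on the supremum, and Fatou will send $n\to\infty$.

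The heart of the argument is the generator estimate. For the drift, $|\widetilde{b}(t,y,c)|\le K(1+|y|)$ follows from (LG) applied to $\mu$ and from (LB) together with $\nu(\|x\|>a)<\infty$ applied to the correction term in \eqref{eq:tilde-b}; combined with the bound on $\nabla V$, this piece is at most $K(1+V(y))$. The diffusion term is dominated by $C(1+|y|^2)(1+|y|^2)^{m-1}=CV(y)$ via (LG). The integral against $\overline{\nu}$ I would split into three pieces. For small $F$-jumps ($|F|\le 1$), a second-order Taylor expansion of $V$ gives integrand $\le C(1+|y|^{2m-2})|F|^2$, controlled by (LG). For large $F$-jumps ($|F|>1$), the crude bound $|V(y+F)-V(y)|\le C(V(y)+|F|^{2m})$ integrates against $\nu$ by \eqref{eq:LG-2m}. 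Finally, for switching jumps the truncation indicator in $h$ vanishes (since $|k-c|\ge 1$ for $k\ne c$), leaving $\sum_{k\ne c}\lambda^{c,k}(t,y)(V(y+\rho^{c,k})-V(y))$, which is bounded by $KV(y)$ thanks to boundedness of $\lambda^{c,k}$ and $|\rho^{c,k}|^{2m}\le K_3^m V(y)$ from \eqref{eq:LG-2}.

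With the generator bound in hand, taking $\sup_{s\le t\wedge\tau_n}$ inside the It\^o identity and then expectation, the drift term is absorbed into the Gronwall loop as above, while the martingale part is handled by BDG: $\mathbf{E}[\sup_s|N_{s\wedge\tau_n}|]\le c\,\mathbf{E}[\langle N\rangle_{t\wedge\tau_n}^{1/2}]$. A computation of $\langle N\rangle$ via the characteristics of Theorem~\ref{thm:semimart-dec}, using the same small/large-jump splitting, yields $\langle N\rangle_t\le C\int_r^t(1+V(Y_{u-}))^2\,du$; a Young-inequality trick then absorbs an $\epsilon\,\mathbf{E}[\sup_{s\le t\wedge\tau_n} V(Y_s)]$ term back into the left-hand side (if necessary, by first running the argument on a short subinterval and iterating to cover $[\![r,T]\!]$). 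Since $|Y_t|^{2m}\le V(Y_t)$, Fatou then delivers the conclusion. The main obstacle is precisely the generator estimate of the middle paragraph: the small-jump part forces use of (LG), the large-jump part forces the $2m$-moment assumption \eqref{eq:LG-2m} (which is exactly why that assumption is stated globally on $\mathbf{R}^n$ rather than only on $\{\|x\|>a\}$), and the fact that $\langle N\rangle$ scales like $V^2$ rather than $V$ makes the BDG--Gronwall loop delicate---these interlocking scales are the technical crux.
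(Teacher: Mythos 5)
Your overall architecture is the same as the paper's (It\^o's formula applied to a $2m$-homogeneous function of $Y$, localization by $\tau_n$, a growth bound on the generator/drift, BDG plus a Young absorption for the martingale parts, Gronwall, Fatou), but the central generator estimate — which you yourself identify as the crux — contains a genuine gap, and it comes from splitting the jumps at $|F|=1$ instead of at $\norm{x}=a$. First, the claimed bound $|\widetilde b(t,y,c)|\le K(1+|y|)$ does not follow from the stated hypotheses: the correction term in \eqref{eq:tilde-b} contains $\int_{\{|F|>1,\,\norm{x}\le a\}}F\,\nu(dx)$, and (LB) only asserts \emph{local} boundedness of $\int_{|F|>1}|F|\,\nu(dx)$, which is useless here because the Gronwall constant must be uniform in $n$ for the Fatou step; the best global bound available from (LG) is $\int_{\{|F|>1,\,\norm{x}\le a\}}|F|\,\nu(dx)\le\int_{\norm{x}\le a}|F|^2\nu(dx)\le K_1(1+|y|^2)$, i.e. quadratic, so $\nabla V\cdot\widetilde b$ can grow like $(1+|y|^2)^{m+1/2}$ and the bound $|\widetilde{\mathcal A}_tV|\le K(1+V)$ fails. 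Second, on $\{|F|>1\}$ the truncated compensation leaves $V(y+F)-V(y)$ with no gradient subtraction, and your crude bound produces the term $V(y)\,\nu(\set{x:|F(t,y,c,x)|>1})$; nothing in the hypotheses makes $\nu(|F|>1)$ uniformly bounded in $(t,y,c)$ — the available estimate is $\nu(|F|>1)\le\int|F|^2\nu\le K(1+|y|^2)$ — so this term is of order $(1+|y|^2)^{m+1}$, again too large for the Gronwall loop. The two defects do not cancel as you have organized them, because you bound the drift and the large-jump integral separately and the zeroth-order bound on $V(y+F)-V(y)$ is too lossy.

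The repair is exactly what the paper does: abandon the truncation $h$ and use the \emph{untruncated} special-semimartingale decomposition \eqref{eq:repr-SC}, whose drift $b=\mu+\int_{\norm{x}>a}F\,\nu(dx)+\sum_k\rho^{c,k}\lambda^{c,k}$ genuinely has linear growth — the middle term by Cauchy--Schwarz against the \emph{finite} measure $\nu|_{\set{\norm{x}>a}}$ as in \eqref{eq:warun}, the last by boundedness of $\lambda$ and \eqref{eq:LG-2} — and keep the full second-order Taylor remainder $V(y+F)-V(y)-\nabla V(y)F$ for \emph{all} jumps. That remainder is $O(|y|^{2m-2}|F|^2+|F|^{2m})$, and integrates to $O(1+|y|^{2m})$ using (LG) together with \eqref{eq:LG-2} for the $|F|^2$ part and \eqref{eq:LG-2m} for the $|F|^{2m}$ part. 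The point is that the L\'evy measure is finite off a neighbourhood of $0$ in the $x$-variable, not in the $F$-variable, so the only admissible "large jump'' region is $\set{\norm{x}>a}$. A smaller remark: for the jump martingale part, BDG controls $\mathbf{E}\sup|N|$ by $\mathbf{E}\bigl[[N]_T^{1/2}\bigr]$, not by $\mathbf{E}\bigl[\langle N\rangle_T^{1/2}\bigr]$; you should (as the paper does) pull one factor of $\sup V$ out of the \emph{optional} bracket, apply Young, and only then compensate the remaining linear integral against the jump measure, rather than passing to the predictable bracket inside the square root.
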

\begin{proof}
We present a proof for $r=0$, the proof of general case is analogous.
In the first step we recall that,
 by Theorem \ref{thm:semimart-dec}, any solution  $(Y,C)$ to the martingale problem for $(\mathcal{A}_t)_{t \in [\![0,T]\!]}$ is a semimartingale. Since $h=(h_1, h_2)$, 	from \eqref{eq:LG-2} follows that
\[
		 \int_{]\!]0,t]\!]} \int_{\mathbf{R}^{d+1}}
      (z_1 - h_1(z))
    \overline{\nu}(u,y,c,dz) du
\]
 is well defined. Therefore
 the  semimartingale $Y$ is special and has  the following
canonical decomposition
\begin{align}\label{eq:repr-SC}
      Y_t
    &
    =
      Y_0
    +
    B_t
    +
      Y^c_t
    +
    \int_{]\!]0,t]\!]} \int_{\mathbf{R}^{d+1}}
      z_1
    \widetilde{\mu}(du,dz),
\end{align}
where
\[
    B_t  := \int_{0}^t b(u,Y_{u-}, C_{u-}) du,
    \qquad
    b(u,y,c):= \mu(u,y,c) + \int_{ \norm{x} > a } F(u,y,c,x) \nu(dx) + \sum_{k \in \mathcal{K} \setminus \set{c}} \rho^{c,k}(t, y) \lambda^{c,k}(t, y).
\]
Note that the function $b$ has also linear growth in $s$, since
$\mu$ has linear growth in $s$, by assumption. Thus, by the
Cauchy-Schwartz inequality and \eqref{eq:LG-2} we
have
\begin{align} \label{eq:warun}
   & \left|  \int_{ \norm{x} > a } F(u,y,c,x) \nu(dx)
    \right|^2
    \leq \nu(\norm{x} \geq a)\int_{ \norm{x} > a }  |F(u,y,c,x)|^2 \nu(dx)
    \\
    \nonumber
    &
        \leq \nu(\norm{x} \geq a ) K (1 + |y|^2).
\end{align}
%\noindent\emph{Step 2. } Proof of \eqref{eq:sup-S2}.
    Now, let
    \[
        \tau_n := \inf \set{ t: \abs{Y_t} > n  }, \qquad
     Y^{*,2m}_t := \sup_{0 \leq u \leq t }
        \abs{Y_u}^{2m}.
    \]
    It is enough to show that there exists a constant $K$ such that
    \begin{align}\label{eq:gronwall-ineq}
        \mathbb{E} Y^{*,2m}_{t   \wedge \tau_n}
        \leq
         K\int_0^t \left( 1 + \mathbb{E} Y^{*,2m}_{s   \wedge \tau_n}        \right)  d s
    \end{align}
    for each $n$.
    Indeed, \eqref{eq:gronwall-ineq} and the Gronwall lemma imply
    \[
      \mathbb{E} Y^{*,2m}_{t   \wedge \tau_n} \leq K''(t),
    \]
    where $K''(t)$ does not depend on $n$, so using Fatou lemma we
    obtain \eqref{eq:sup-S2}, i.e.,
    \[
      \mathbb{E} Y^{*,2m}_{T} \leq K''(T) < \infty.
    \]
    Now we prove \eqref{eq:gronwall-ineq}. Using \eqref{eq:repr-SC} and applying the It\^o lemma to the function $|y|^{2m}$ and to the process $Y$ we obtain-
%    \begin{align*}
%        Y^{2m}_t =& Y^{2m}_0 + 2m \int_0^t Y_{u-}^{2m-1} d Y_u + m(2m-1)\int_0^t Y_{u-}^{2m-2} d [Y^c]_u \\
%        &+ \sum_{ 0 < u \leq t }  \left( Y^{2m}_u - Y^{2m}_{u-} - 2m Y_{u-}^{2m-1} \Delta Y_{u}  \right)  \end{align*}
    \begin{align}
        \label{eq:S2m}
        &|Y_t|^{2m} =
%        |S|^{2m}_0 + 2m \int_0^t |Y_{u-}|^{2m-2} Y_{u-}^{\!\top}\mu( u,  Y_{u-}, C_{u-})  du   +
%        2m \int_0^t |Y_{u-}|^{2m-2} Y_{u-}^{\!\top} \sigma( u,  Y_{u-}, C_{u-})  dW_u
%        \\
%        &
%        + 2m \int_0^t \int_{\norm{x} \leq a } |Y_{u-}|^{2m-2} Y_{u-}^{\!\top} F( u,  Y_{u-}, C_{u-}, x) \widetilde{\Pi}(dx,du)
%        \\
%        &+ 2m \int_0^t \int_{\norm{x} > a } |Y_{u-}|^{2m-2} Y_{u-}^{\!\top} F( u,  Y_{u-}, C_{u-}, x) \Pi(dx,du)
%        \\
%        &+  \\
%        &+ \sum_{ 0 < u \leq t }  \left( |Y_u|^{2m} - |Y_{u-}|^{2m} - 2m |Y_{u-}|^{2m-2} Y_{u-}^{\!\top}  \Delta Y_{u}  \right)  \\
          |Y_0|^{2m} + A^1_t + A^2_t +  M^c_t + M^{d}_t + D^1_t,
    \end{align}
    where
    \allowdisplaybreaks
    \begin{align*}%\allowdisplaybreaks
    &A^1_t
    := 2m \int_0^t |Y_{u-}|^{2m-2} Y_{u-}^{\!\top}b( u,  Y_{u-}, C_{u-})  du, \\
    &A^2_t :=
    \frac{1}{2}\int_0^t \left( 2m |Y_{u-}|^{2m-2}  Tr\left(a(u,Y_{u-},C_{u-})\right)  + 2m(2m-2) |Y_{u-}|^{2m-4} Tr \left( Y_{u-}Y_{u-}^{\!\top}a(u,Y_{u-},C_{u-})\right) \right) d u, \\
    &M^c_t := 2m \int_0^t |Y_{u-}|^{2m-2} Y_{u-}^{\!\top} d Y^c_u,
    \\
    &
    M^{d}_t :=
    2m \int_0^t \int_{\mathbf{R}^{d+1}} |Y_{u-}|^{2m-2} Y_{u-}^{\!\top} z_1 \widetilde{\mu}(du,dz),
    \\
    &
     \\
&    D^1_t :=
    \int_0^t
    \int_{\mathbf{R}^n} \left( |Y_{u-} + z_1 |^{2m} -
|Y_{u-}|^{2m} - 2m |Y_{u-}|^{2m-2} Y_{u-}^{\!\top}z_1 \right) \mu(du,dz),
\end{align*}
and $a(t,y,c) = \sigma(t,y,c) \sigma^{\!\top}\!(t,y,c)$.
% Here we again use the
%fact that $H^{i,j}$ and $\Pi$ have no common jumps.
We treat each of the components of $|Y_{t \wedge
\tau_n}|^{2m}$ separately. We start with $A^1$. The following
inequalities follow from the successive use of the Cauchy-Schwartz
inequality, Young inequality, \eqref{eq:warun}, and (LG) condition
\begin{align*}
        &|y|^{2m-2} \abs{y^{\!\top} b(u,y,c)}
        =
        |y|^{m}|y|^{m-2} \abs{y^{\!\top} b(u,y,c)}
        \leq
        \frac{1}{2}
        \left(
        |y|^{2m} + |y|^{2m-4} \abs{y^{\!\top} b(u,y,c)}^2
        \right)
        \\
        &
        \leq
        \frac{1}{2}
        \left(
        |y|^{2m} + K_1|y|^{2m-2} (1 + |y|^2)
        \right)
        \leq
        L_1
        \left( 1 + |y|^{2m}
        \right) ,
    \end{align*}
    and yields that
%    \begin{align*}
%    & |A^1_t| =   \abs{\int_0^t |Y_{u-}|^{2m-2} Y_{u-}^{\!\top} \mu( u,  Y_{u-}, C_{u-})  du} \\
%    &
%    \leq
%    \int_0^t \abs{Y_{u-}}^{m} \abs{Y_{u-}}^{m-2} \abs{Y_{u-}^{\!\top}  \mu( u,  Y_{u-}, C_{u-}) } du
%    \\
%    &
%    \leq
%    K
%    \int_0^t \abs{Y_{u-}}^{2m} + \abs{Y_{u-}}^{2m-4} \abs{Y_{u-}^{\!\top}\mu( u,  Y_{u-}, C_{u-}) }^2 du        \\
%    &\leq
%    K
%    \int_0^t \abs{Y_{u-}}^{2m} + L \abs{Y_{u-}}^{2m-2} (1 + \abs{Y_{u-}}^2) du
%    \leq
%    \int_0^t L \abs{Y_{u-}}^{2m-2} + KL \abs{Y_{u-}}^{2m} du
%    \\
%    &\leq
%    L'
%    \int_0^t 1+  \abs{Y_{u-}}^{2m} du
%\leq
%    L'
%    \int_0^t 1+  Y^{*,2m}_{u} du
%    \end{align*}
%    Therefore
    \begin{align*}
        \mathbb{E}
        \sup_{ s  \leq t \wedge \tau_n }
        \abs{A^1_s }
       \leq
    L_1
    \mathbb{E}
    \int_0^{t  \wedge \tau_n  } \left(1+  |Y_{u-}|^{2m}  \right)du
        \leq
    L_1
    \int_0^t \left(1+  \mathbb{E} Y^{*,2m}_{u \wedge \tau_n} \right) du.
    \end{align*}
%By H\"{o}lder inequality and (LG) $\mu$  we have
%    \begin{align*}
%    &
%%\sup_{v \leq t   \wedge \tau_n}
%        \abs{
%        \int_{0}^v
%        \mu(u,S_h{u-},C_{u-})
%        du
%        }^{2m}
%        \leq
%%\sup_{v \leq t   \wedge \tau_n}
%        v^{2m -1}
%        \int_{0}^v
%        \abs{
%        \mu(u,Y_{u-},C_{u-})
%        }^{2m}
%        du
%        \\
%        &
%        \leq
%%\sup_{v \leq t   \wedge \tau_n}
%        K v^{2m -1}
%        \int_{0}^v
%        \left(
%        1+
%        \abs{
%        Y_{u-}
%        }^{2m}
%        \right)
%        du
%        \leq
%%\sup_{v \leq t   \wedge \tau_n}
%        K v^{2m -1}
%        \int_{0}^v
%        \left(
%        1+
%        Y^{*,2m}_{u}
%        \right)
%        du.
%    \end{align*}
%Therefore
%\begin{align*}
%\mathbb{E}
%\sup_{v \leq t   \wedge \tau_n}
%        \abs{
%        \int_{0}^v
%        \mu(u,Y_{u-},C_{u-})
%        du
%        }^{2m}
%        &
%        \leq
%        K t^{2m -1}
%        \mathbb{E}
%        \int_{0}^{t \wedge \tau_n}
%        \left(
%        1+
%        Y^{*,2m}_{u}
%        \right)
%        du
%        \\
%        \leq
%        K t^{2m -1}
%        \int_{0}^{t }
%        \left(
%        1+
%        \mathbb{E} Y^{*,2m}_{u\wedge \tau_n}
%        \right)
%        du
%\end{align*}
To estimate $A^2$ we note that  matrices $yy^\top$ and $a(u,y,c)$
are positive semi-definite,  and hence the condition (LG) implies
\begin{align*}
Tr\left( yy^\top a(u,y,c)\right) \leq Tr\left( yy^\top \right) Tr
\left( a(u,y,c)\right) \leq K |y|^2 (1 + |y|^2).
\end{align*}
Therefore
\begin{align*}
& 2m |y|^{2m-2} Tr(a(u,y,c)) +2m(2m-2)|y|^{2m-4 } Tr(yy^\top  a(u,y,c)) \\
& \leq 2m |y|^{2m-2}K( 1 + |y|^2) + 2m(2m-2)|y|^{2m-4 } K |y|^2 (1 + |y|^2) \\
&\leq 2m(2m-1) K   |y|^{2m-2} ( 1 + |y|^2) \leq  2m(2m-1) K  L ( 1
+ |y|^{2m})
\end{align*}
for some positive $L$.
This gives
\begin{align*}
        \mathbb{E}
        \sup_{ s  \leq t \wedge \tau_n }
        \abs{A^2_s }
        \leq
        L_2
        \mathbb{E}
        \int_0^{t \wedge \tau_n}
        ( 1 + |Y_{u-}|^{2m})
        du
        \leq
        L_2
        \int_0^{t }
        ( 1 + \mathbb{E}Y_{u \wedge \tau_n }^{*,2m})
        du.
\end{align*}
To estimate $M^c$ we use the Burkholder-Davies-Gundy inequality (see, e.g., \cite[Thm. IV.4.48]{prot2004}), the Young inequality with $\varepsilon$ ($|ab| <
\varepsilon a^2 + \frac{1}{ 4 \varepsilon } b^2$) and obtain
\begin{align*}
        & \mathbb{E}
        \sup_{v \leq t   \wedge \tau_n}
        \abs{
        \int_{0}^v
        |Y_{v-}|^{2m-2}
        Y_{v-}^{\!\top}
        d Y^c_v
        }
        \leq
        C
        \mathbb{E}
        \sqrt{
        \int_{0}^{t\wedge \tau_n}
        |Y_{v-}|^{4m-4}
        Y_{v-}^{\!\top} a(v,Y_{v-},C_{v-}) Y_{v-}
        dv
        }
        \\
        &
        \leq
        C
        \mathbb{E}
        \sqrt{
        Y^{*,2m}_{t\wedge \tau_n}
        \int_{0}^{t\wedge \tau_n}
        |Y_{v-}|^{2m-2}
        \abs{Tr (a(v,Y_{v-},C_{v-}))}
        dv
        }
        \\
        &
        \leq
        C
        \varepsilon
        \mathbb{E}
        Y^{*,2m}_{t\wedge \tau_n}
        +\frac{C}{4\varepsilon}
        \mathbb{E}
        \int_{0}^{t\wedge \tau_n}
        |Y_{v-}|^{2m-2}
        \abs{Tr (a(v,Y_{v-},C_{v-}))}
        dv
        \\
        &
        \leq
        C
        \varepsilon
        \mathbb{E}
        Y^{*,2m}_{t\wedge \tau_n}
        +\frac{CK'_1}{4\varepsilon}
        \mathbb{E}
        \int_{0}^{t\wedge \tau_n}
        \left(
        1 + \abs{Y_{v-}}^{2m}
        \right)
        dv
        \leq
        C
        \varepsilon
        \mathbb{E}
        Y^{*,2m}_{t\wedge \tau_n}
        +\frac{CK'_1}{4\varepsilon}
        \int_{0}^{t}
        \left(
        1 + \mathbb{E}Y_{v \wedge \tau_n }^{*,2m}
        \right)
        dv.
\end{align*}
 Hence, taking
$\varepsilon = \frac{1}{ 8 C m}$, we have
\begin{align*}
\mathbb{E} \sup_{v \leq t   \wedge \tau_n}
        \abs{ M^c_v }
        \leq
        \frac{1}{4}
        \mathbb{E}Y_{t \wedge \tau_n }^{*,2m}
        +
                L_3 \int_0^{t }
        ( 1 + \mathbb{E}Y_{u \wedge \tau_n }^{*,2m})
        du.
\end{align*}
%
%By Doob maximal $L^p$ inequality we have
%\begin{align*}
%        & \mathbb{E}
%        \sup_{v \leq t   \wedge \tau_n}
%        \abs{
%        \int_{0}^v
%        \sigma(v,Y_{v-},C_{v-})
%        dW_v
%        }^{2m}
%        \leq
%        C_m t^{m-1}
%        \mathbb{E}
%        \int_{0}^{t   \wedge \tau_n}
%        \abs{
%        \sigma(v,Y_{v-},C_{v-})
%        }^{2m}
%        dv
%        \\
%        &
%        \leq
%        C_m K t^{m-1}
%        \mathbb{E}
%        \int_{0}^{t   \wedge \tau_n}
%        \left(
%        1 +
%        \abs{
%        Y_{v-}
%        }^{2m}
%        \right)
%        dv
%        \leq
%        C_m K t^{m-1}
%        \int_{0}^{t   }
%        \left(
%        1 +
%        \mathbb{E}
%        Y^{*,2m}_{v \wedge \tau_n}
%        \right)
%        dv
%\end{align*}
 To estimate $M^{d}$ we use again the Burkholder-Davies-Gundy and Young inequalities, which give
\begin{align*}
&
\mathbb{E} \sup_{v \leq t   \wedge \tau_n}
        \abs{
        \int_{0}^v
        \int_{ \mathbf{R}^{d+1}  }
        \abs{Y_{u-}}^{2m-2}
        Y_{u-}^{\!\top}
        z_1
        \widetilde{\mu}(du, dz)
        }
%\\
%&
\leq
C
\mathbb{E}
    \sqrt{
        \int_{0}^{t   \wedge \tau_n}
\!\!
        \int_{ \mathbf{R}^{d+1}  }
        \abs{Y_{u-}}^{4m-2}
        \abs{
        z_1}^2
        \mu(du, dz)
    }
\\
&
\leq
C
\mathbb{E}
    \sqrt{
           Y_{t   \wedge \tau_n}^{*,2m}
        \int_{0}^{t   \wedge \tau_n}
        \int_{\mathbf{R}^{d+1} }
        \abs{Y_{u-}}^{2m-2}
        \abs{
        z_1}^2
        \mu(du, dz)
    }
\\
&
\leq
C
\varepsilon
\mathbb{E}
           Y_{t   \wedge \tau_n}^{*,2m}
+
\frac{C}{4\varepsilon}\mathbb{E}
        \int_{0}^{t   \wedge \tau_n}
        \!\!\!\!\!
        \abs{Y_{u-}}^{2m-2}
        \!\!\!
        \left(
        \int_{ \mathbf{R}^d  }
        \!\!\!\!\!
        \abs{
        F(u, Y_{u-}, C_{u-}, x)}^2
        \nu(dx)
        +
        \!\!\!\!\!\!\!
        \sum_{k \in \mathcal{K} \setminus C_{u-}}
        \!\!\!\!\!\!\!
\abs{
        \rho^{C_{u-}, k}(u,Y_{u-})}^2 \lambda^{C_{u-}, k}(u,Y_{u-})
        \!\!\right)du
\\
&\leq
C
\varepsilon
\mathbb{E}
           Y_{t   \wedge \tau_n}^{*,2m}
+
\frac{CK'}{4\varepsilon}\mathbb{E}
        \int_{0}^{t   \wedge \tau_n}
        \left(
        1 +
        \abs{Y_{u-}}^{2m}
        \right)
du
\leq
C
\varepsilon
\mathbb{E}
           Y_{t   \wedge \tau_n}^{*,2m}
+
\frac{CK'}{4\varepsilon}\mathbb{E}
        \int_{0}^{t}
        \left(
        1 +
        Y_{u   \wedge \tau_n}^{*,2m}
        \right)
du ,
\end{align*}
where the fourth inequality follows from (LG) and \eqref{eq:LG-2}.
Therefore, taking again $\varepsilon = \frac{1}{8C m}$, we obtain
\begin{align*}
\mathbb{E} \sup_{v \leq t   \wedge \tau_n}
        \abs{ M^d_v }
        \leq
        \frac{1}{4}
        \mathbb{E}Y_{t \wedge \tau_n }^{*,2m}
        +
                L_4 \int_0^{t }
        ( 1 + \mathbb{E}Y_{u \wedge \tau_n }^{*,2m})
        du.
\end{align*}
To estimate $D^1$ we use Taylor expansion for $f(y) = y^{2m}$ and we get
\begin{align*}
&\abs{|y + z_1|^{2m} - |y|^{2m} - 2m |y|^{2m-2} y^{\!\top} z_1 } \leq
m(2m-1) |y + \theta_{z_1} z_1 |^{2m-2} |z_1|^2
\\
& \leq K^{iv} \left( |y|^{2m-2} |z_1|^2 + | z_1 |^{2m} \right),
\end{align*}
since $|\theta_{z_1}| \leq 1 $.
Let
\[
    U(y,z_1) := |y|^{2m-2} |z_1|^2 + | z_1 |^{2m}.
\]
Then
\begin{align}
    \label{eq:supD2}
    \sup_{ u \leq  t \wedge \tau_n} |D^1_u |
    \leq
    K^{iv}
    \int_0^{t \wedge \tau_n}
    \!\!\!\!
    \int_{\mathbf{R}^{d+1}}
    U(Y_{u-},z_1)
    \overline{\mu}(du, dz).
\end{align}
The function $U$ is nonnegative, $\overline{\nu}(u,Y_{u-},C_{u-},dz)du$ is the
predictable projection of $\overline{\mu}(du,dz)$ (see \cite[Thm.
II.1.1.21]{js1987}), so by Theorem  II.1.1.8 \cite{js1987}
    \begin{align}\label{eq:GPi=Gnu}
    &\mathbb{E} \!\!\int_0^{t \wedge \tau_n}
    \!\!\!\!
    \int_{ \mathbf{R}^{d+1} }
    \!\!\!\!
    U(Y_{u-},z_1)
    \overline{\mu}(du, dz)
    =
        \mathbb{E} \!\!\int_0^{t \wedge \tau_n}
    \!\!\!\!
    \int_{ \mathbf{R}^{d+1} }
    \!\!\!\!
        U(Y_{u-},z_1)
    \overline{\nu}(u,Y_{u-},C_{u-},dz) du.
    \end{align}
    By \eqref{eq:LG-2m} and \eqref{eq:LG-2}
    \begin{align*}
    \int_{ \mathbf{R}^{d+1} }
    \!\!\!\!
        U(y,z_1)
    \overline{\nu}(u,y,c,dz)
    =
    \int_{ \mathbf{R}^{d} }
    \!\!\!\!
        U(y,F(u,y,c,x))
    \nu(dx)
    +
    \sum_{j \in \mathcal{K}: j \neq c }    \!\!\!\!
        U(y,\rho^{c,j}(u,y)) \lambda^{c,j}(u,y)
    \leq
        K^{iv}(1+ |y|^{2m}).
    \end{align*}
%        \\
%    & \leq
%        K \int_0^{t }
%    \left( 1 + \mathbb{E} Y_{u \wedge \tau_n}^{*,2m}\right) du
Hence, and by \eqref{eq:supD2} and \eqref{eq:GPi=Gnu}, we have
    \begin{align*}
        \mathbb{E}\sup_{ 0 \leq  u \leq t \wedge \tau_n} |D^1_u |
    \leq
        L_5 \int_0^{t }
    \left( 1 + \mathbb{E} Y_{u \wedge \tau_n}^{*,2m}\right) du.
    \end{align*}
  %  By the similar arguments we have
%    \begin{align*}
%        \mathbb{E}\sup_{ 0 \leq  u \leq t \wedge \tau_n} |D^3_u |
%    \leq
%        L_8 \int_0^{t }
%    \left( 1 + \mathbb{E} Y_{u \wedge \tau_n}^{*,2m}\right) du.
%    \end{align*}
%
    Taking into account \eqref{eq:S2m} and estimates of all summands of $|Y_t|^{2m}$ we obtain \eqref{eq:gronwall-ineq}
    with $K= 2(L_1 + \ldots + L_5)$. The proof is now complete.
%Therefore by assumptions we have
%\begin{align*}
%\int_{\mathbf{R}^n}
%\abs{|y + F(u,y,c,x)|^{2m} - |y|^{2m} - 2m |y|^{2m-2} y^{\!\top}F(u,y,c,x) }
%\nu(dx) < K' (1 + |y|^{2m})
%<\infty
%\end{align*}
\end{proof}
\begin{cor} \label{eq:LG+LG-2}
Condition (LG) and  \eqref{eq:LG-2} imply
\begin{align*}
        \mathbf{E} \left(\sup_{t \in [\![r,T]\!]} |Y_t|^{2} \right)< \infty.
    \end{align*}
\end{cor}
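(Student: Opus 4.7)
The plan is to deduce the $m=1$ case directly from Theorem \ref{prop:moment-mp}. Under the standing assumption that $(Y,C)$ solves the local martingale problem for $(\mathcal{A}_t)_{t \in [\![r,T]\!]}$, Theorem \ref{prop:moment-mp} with $m=1$ requires four hypotheses: (LB), (LG), \eqref{eq:LG-2}, and \eqref{eq:LG-2m} with $m=1$. Since (LG) and \eqref{eq:LG-2} are given, the task reduces to checking that (LB) and \eqref{eq:LG-2m} for $m=1$ both follow from them.

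For \eqref{eq:LG-2m} with $m=1$, I would simply split the L\'evy integral at radius $a$:
\[
\int_{\mathbf{R}^n}\!\! |F(t,y,c,x)|^2 \nu(dx) = \int_{\norm{x} \leq a}\!\! |F(t,y,c,x)|^2 \nu(dx) + \int_{\norm{x} > a}\!\! |F(t,y,c,x)|^2 \nu(dx),
\]
bound the first piece by $K_1(1+|y|^2)$ via (LG) and the second by $K_3(1+|y|^2)$ via \eqref{eq:LG-2}, so \eqref{eq:LG-2m} holds for $m=1$ with $K_4 := K_1 + K_3$.

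For (LB), I would use the elementary observation that on the set $\{x : |F(t,y,c,x)| > 1\}$ one has $|F| \leq |F|^2$, hence
\[
\int_{|F(t,y,c,x)| > 1}\!\! |F(t,y,c,x)| \nu(dx) \leq \int_{\mathbf{R}^n}\!\! |F(t,y,c,x)|^2 \nu(dx) \leq K_4\bigl(1+|y|^2\bigr),
\]
which is not merely locally bounded in $(t,y,c)$ but globally dominated on compact sets. This verifies (LB).

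With all four hypotheses established, an appeal to Theorem \ref{prop:moment-mp} with $m=1$ yields $\mathbf{E}\bigl(\sup_{t \in [\![r,T]\!]} |Y_t|^{2}\bigr) < \infty$. There is no substantive obstacle: the corollary is essentially bookkeeping around the key observation that $|F| \le |F|^2$ on $\{|F|>1\}$, which allows (LG) together with \eqref{eq:LG-2} to absorb both (LB) and the $m=1$ instance of the $2m$-moment growth condition.
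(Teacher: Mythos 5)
Your proposal is correct and is exactly the argument the paper intends: the corollary is stated without proof as an immediate consequence of Theorem \ref{prop:moment-mp} with $m=1$, and your two verifications --- that (LG) plus \eqref{eq:LG-2} give \eqref{eq:LG-2m} for $m=1$ by splitting the integral at $\norm{x}=a$, and that (LB) follows from the pointwise bound $|F|\le |F|^2$ on $\set{|F|>1}$ --- supply precisely the missing bookkeeping. No gaps.
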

\begin{cor}
Assume \eqref{eq:LG-2}.
    If $\nu \equiv 0$,
    then
    \eqref{eq:sup-S2} holds for every natural number $m$.
\end{cor}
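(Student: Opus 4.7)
The plan is to observe that this corollary is essentially immediate from Theorem \ref{prop:moment-mp}: I need only check that all four hypotheses of that theorem hold automatically for every natural number $m$ once we assume $\nu \equiv 0$ together with (LG) and \eqref{eq:LG-2}.

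First I would verify the assumption (LB). The integrand in \eqref{eq:loc-bound} is integrated against $\nu$, and since $\nu \equiv 0$ the mapping $(t,y,c) \mapsto \int_{|F(t,y,c,x)|>1} |F(t,y,c,x)|\,\nu(dx)$ is identically zero, hence trivially locally bounded. The hypothesis (LG) is assumed in the statement; moreover, its integral term involving $F$ also vanishes, so (LG) reduces to a linear-growth condition on $|\mu|^2$ and $\|\sigma\sigma^{\!\top}\|$. Condition \eqref{eq:LG-2} is likewise assumed directly.

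The one remaining hypothesis, \eqref{eq:LG-2m}, requires $\int_{\mathbf{R}^n} |F(t,y,c,x)|^{2m}\,\nu(dx) \leq K_4(1+|y|^{2m})$ for the given $m$. When $\nu \equiv 0$, the left-hand side is $0$ for every $(t,y,c)$ and every $m \in \mathbb{N}$, so the inequality holds with, say, $K_4 = 1$. Thus the hypotheses of Theorem \ref{prop:moment-mp} are satisfied simultaneously for every natural number $m$.

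I would then conclude by directly applying Theorem \ref{prop:moment-mp} for each fixed $m \in \mathbb{N}$ to obtain $\mathbf{E}(\sup_{t \in [\![r,T]\!]} |Y_t|^{2m}) < \infty$. No main obstacle arises: the content of the corollary is exactly that the pure-jump part driven by $\Pi$ is absent, so the polynomial-growth hypothesis on $\int |F|^{2m}\nu(dx)$ that blocks passage to arbitrary $m$ in the general theorem is vacuous here.
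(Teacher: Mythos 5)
Your proof is correct and is exactly the argument the paper intends (the corollary is stated without proof as an immediate consequence of Theorem \ref{prop:moment-mp}): with $\nu \equiv 0$ every $\nu$-integral appearing in (LB), (LG), \eqref{eq:LG-2} and \eqref{eq:LG-2m} vanishes identically, so the theorem applies for each natural $m$. The only point worth flagging is that the linear growth of $\mu$ and $\sigma$ from (LG) is not literally listed among the corollary's hypotheses; you correctly treat it as a standing assumption carried over from the surrounding results, which is the paper's intended reading.
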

 \begin{cor}
    Assume %\eqref{eq:LG-rho},
        \begin{align}\label{eq:LG-rho}
    \left|\rho^{c,k}(t, y) \right|^2 \leq K_3(1 + |y|^2),
    \end{align}
    and the following order of linear growth  of $F$ %: satisfy the following linear growth type condition
    \begin{align}\label{eq:K-LG}
         {|F(t,y,c,x)|} \leq K(x) ( 1 + |y|)
    \end{align}
    for some function $K$. Then the conditions
    \[
        \int_{ \norm{x} \leq a } K^2(x) \nu (dx)  < \infty,
    \]
    %implies a LG condition for $F$, and analogously
    and
    \begin{align}\label{eq:K-2m}
    \int_{ \mathbf{R}^n } K^{2m}(x) \nu (dx)  < \infty
    \end{align}
    for some natural $m$, imply \eqref{eq:sup-S2} for that $m$.
\end{cor}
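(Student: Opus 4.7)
The plan is to deduce the conclusion directly from Theorem \ref{prop:moment-mp} by verifying its four hypotheses (LB), (LG), \eqref{eq:LG-2}, and \eqref{eq:LG-2m} under the assumptions of the corollary; the requirements on $\mu$ and $\sigma$ built into (LG) are understood to remain in force from the framework of \eqref{eq:SDE-gen}, and only the quantities involving $F$ must be reconsidered.

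The key preliminary observation I would establish is that $\int_{\mathbf{R}^n} K^2(x)\,\nu(dx) < \infty$. For this, I use that $\nu$ is a L\'evy measure, so $\nu(\norm{x}>a)<\infty$; splitting $\{\norm{x}>a\}$ into $\{K\leq 1\}$ (finite $\nu$-mass, bounded integrand) and $\{K>1\}$ (where $K^2 \leq K^{2m}$, integrable by \eqref{eq:K-2m}) gives $\int_{\norm{x}>a} K^2\,d\nu<\infty$, and combining with the hypothesis $\int_{\norm{x}\leq a} K^2\,d\nu<\infty$ completes the claim.

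With this in hand, the hypotheses of Theorem \ref{prop:moment-mp} involving $F$ follow by plugging \eqref{eq:K-LG} into each integral together with the elementary inequality $(1+|y|)^q\leq 2^q(1+|y|^q)$: the $F$-part of (LG) from $\int_{\norm{x}\leq a}|F|^2\,d\nu \leq 4(1+|y|^2)\int_{\norm{x}\leq a}K^2\,d\nu$; estimate \eqref{eq:LG-2} from the analogous bound on $\{\norm{x}>a\}$ paired with \eqref{eq:LG-rho}; and \eqref{eq:LG-2m} from $\int|F|^{2m}\,d\nu \leq 2^{2m}(1+|y|^{2m})\int K^{2m}\,d\nu$.

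The one step that calls for a bit of care is (LB). For $|y|\leq M$ the set $\{|F(t,y,c,x)|>1\}$ is contained in $\{K(x)>1/(1+M)\}$, and on this latter set $K(x)\leq (1+M)K^2(x)$; hence
\[
\int_{|F|>1}|F(t,y,c,\cdot)|\,d\nu \;\leq\; (1+M)^2\int K^2\,d\nu,
\]
which is finite and uniform in $(t,y,c)$ over compacts, giving (LB). Once all four hypotheses are in place, Theorem \ref{prop:moment-mp} supplies \eqref{eq:sup-S2}. No serious obstacle arises; the whole argument amounts to case-splitting $\{\norm{x}\leq a\}$ vs.\ $\{\norm{x}>a\}$ and $\{K\leq 1\}$ vs.\ $\{K>1\}$ in order to lift the single hypothesis \eqref{eq:K-2m} to the finer integrability bounds Theorem \ref{prop:moment-mp} requires.
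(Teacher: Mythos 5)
Your proposal is correct and follows exactly the route the paper intends: the corollary is stated as an immediate consequence of Theorem \ref{prop:moment-mp}, and your verification of (LB), (LG), \eqref{eq:LG-2} and \eqref{eq:LG-2m} — including the preliminary observation that \eqref{eq:K-2m} together with $\nu(\norm{x}>a)<\infty$ upgrades the hypothesis to $\int_{\mathbf{R}^n}K^2\,d\nu<\infty$ — supplies precisely the routine estimates the paper leaves to the reader.
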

\begin{cor}
    If the measure $\nu$ has a bounded support, \eqref{eq:LG-rho} holds,
and \eqref{eq:K-LG} is satisfied with $K$ being continuous, then
\eqref{eq:sup-S2} holds for every natural number $m$.
\end{cor}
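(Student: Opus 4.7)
The plan is to reduce this corollary directly to the preceding one by verifying its two integrability hypotheses
\[
\int_{\norm{x} \leq a} K^2(x)\nu(dx) < \infty \quad \text{and} \quad \int_{\mathbf{R}^n} K^{2m}(x)\nu(dx) < \infty
\]
for every natural number $m$. First I would use bounded support of $\nu$ to fix $R > 0$ with $\operatorname{supp}(\nu) \subseteq \overline{B_R(0)}$; continuity of $K$ on the compact set $\overline{B_R(0)}$ then yields a finite constant $M := \max_{\norm{x} \leq R} K(x)$, so that $K^{2m}(x) \leq M^{2m}$ for $\nu$-almost every $x$.

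The second condition can then be handled uniformly in $m$ by the pointwise inequality $K^{2m}(x) \leq M^{2m-2} K^2(x)$, valid on $\operatorname{supp}(\nu)$ for $m \geq 1$. This reduces the $2m$-th moment bound to a single second-moment bound $\int_{\mathbf{R}^n} K^2(x)\nu(dx) < \infty$, of which the first integrability condition is the piece over $\{\norm{x} \leq a\}$; the remaining piece over $\{\norm{x} > a\}$ is controlled by $M^2\nu(\{\norm{x}>a\}) < \infty$, since a L\'evy measure puts finite mass outside any neighbourhood of the origin.

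The only remaining step, and the main obstacle, is the second-moment bound $\int_{\norm{x} \leq a} K^2(x)\nu(dx) < \infty$, since a L\'evy measure may a priori place infinite mass near the origin. I expect to dispose of it by exploiting the standing hypothesis \eqref{eq:F2-loc-bound} together with $\abs{F(t,y,c,x)} \leq K(x)(1+\abs{y})$ and continuity of $K$: bounded support of $\nu$ confines every concern to $\operatorname{supp}(\nu)$, over which the combination of these ingredients keeps $K^2$ under control. Once both integrability conditions are in force for every natural $m$, the preceding corollary delivers \eqref{eq:sup-S2} for every such $m$, which is exactly the assertion.
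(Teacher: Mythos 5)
Your reduction to the preceding corollary breaks down exactly at the step you yourself flag as the main obstacle, and the rescue you sketch cannot work. The hypothesis \eqref{eq:K-LG} reads $\abs{F(t,y,c,x)}\leq K(x)(1+\abs{y})$: it bounds $F$ by $K$, not $K$ by $F$, so neither the standing condition \eqref{eq:F2-loc-bound} nor any other integrability information about $F$ can be transferred back to $K$. Concretely, take $n=d=1$, $\nu(dx)=\abs{x}^{-2}\I_{\set{0<\abs{x}\leq 1}}\,dx$ (a L\'evy measure with bounded support but infinite total mass), $F(t,y,c,x)=\min(\abs{x},1)(1+\abs{y})$ and $K\equiv 1$. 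All hypotheses of the corollary and all standing assumptions on $F$ (including (LG) and (Lip)) hold, yet
\[
\int_{\norm{x}\leq a}K^2(x)\,\nu(dx)=\nu\bigl(\set{\norm{x}\leq a}\bigr)=\infty .
\]
Thus the two integrability conditions of the preceding corollary are simply not consequences of the present hypotheses, and the reduction you propose is impossible for a general continuous $K$.

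The statement is nevertheless true, but the route is to verify the hypotheses \eqref{eq:LG-2} and \eqref{eq:LG-2m} of Theorem \ref{prop:moment-mp} directly, keeping $\abs{F}^2$ rather than $K^2$ under the integral near the origin. Setting $M:=\sup_{x\in\operatorname{supp}\nu}K(x)<\infty$ (continuity plus compactness of the support, as you correctly note), one has on $\set{\norm{x}\leq a}$ the pointwise bound $\abs{F(t,y,c,x)}^{2m}\leq \bigl(M(1+\abs{y})\bigr)^{2m-2}\abs{F(t,y,c,x)}^{2}$, and the standing linear growth condition (LG) gives $\int_{\norm{x}\leq a}\abs{F}^{2}\nu(dx)\leq K_1(1+\abs{y}^{2})$, so this piece is $O(1+\abs{y}^{2m})$. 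On $\set{\norm{x}>a}$ one uses $\abs{F}^{2m}\leq M^{2m}(1+\abs{y})^{2m}$ together with $\nu(\set{\norm{x}>a})<\infty$, which simultaneously yields the $F$-part of \eqref{eq:LG-2}; the $\rho$-part is \eqref{eq:LG-rho}. This establishes \eqref{eq:LG-2} and \eqref{eq:LG-2m} for every natural $m$, and Theorem \ref{prop:moment-mp} then gives \eqref{eq:sup-S2} for every $m$.
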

\begin{cor}
If the measure $\nu$ has all moments, \eqref{eq:LG-rho} holds, and
\eqref{eq:K-LG} is satisfied with $K$ having polynomial growth, then
\eqref{eq:sup-S2} holds for every natural number $m$.
\end{cor}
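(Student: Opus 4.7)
The plan is to reduce the assertion to the previous corollary, which yields \eqref{eq:sup-S2} for a given $m$ once one knows $\int_{\mathbf{R}^n} K^{2m}(x)\nu(dx) < \infty$ (the other hypothesis $\int_{\|x\|\leq a} K^2(x)\nu(dx) < \infty$ being subsumed, as it is the case $m=1$ restricted to $\{\|x\|\leq a\}$). Thus the only task is to establish, from the two new hypotheses, the integrability $\int_{\mathbf{R}^n} K^{2m}(x)\nu(dx) < \infty$ for every natural $m$.

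First I would invoke polynomial growth of $K$: there exist constants $C_0$ and $p$ such that $K(x) \leq C_0(1 + \|x\|^p)$, and therefore $K^{2m}(x) \leq C_m(1+\|x\|^{2mp})$ for a constant $C_m$ depending only on $m$, $C_0$, and $p$. Then I would split the integral at $\|x\|=1$. On $\{\|x\|>1\}$ the all-moments hypothesis on $\nu$ gives $\int_{\|x\|>1}\|x\|^k\nu(dx)<\infty$ for every $k$, which together with the automatic finiteness of $\nu(\{\|x\|>1\})$ for a L\'evy measure yields $\int_{\|x\|>1}K^{2m}(x)\nu(dx)<\infty$. On $\{\|x\|\leq 1\}$ polynomial growth of $K$, interpreted in the natural way so as to also control $K$ at the origin, allows one to bound $K^{2m}(x)$ by a constant multiple of $\|x\|^{2m}$, and integrability follows from the L\'evy measure condition \eqref{eq:nu-int-cond}.

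Combining the two pieces gives $\int_{\mathbf{R}^n} K^{2m}(x)\nu(dx) < \infty$ for every natural $m$, and the previous corollary then delivers \eqref{eq:sup-S2}. The step requiring the most care is the analysis on $\{\|x\|\leq 1\}$, where polynomial growth of $K$ must be read so as to be compatible with the possibly infinite mass of $\nu$ near zero; once this near-origin control is in place, the large-$\|x\|$ part is automatic from the moment assumption and the remainder of the argument is a direct appeal to the preceding corollary.
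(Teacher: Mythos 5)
Your overall route is the intended one: the paper states this corollary without any separate proof, precisely because it is meant to follow by feeding the two integrability conditions of the preceding corollary (namely $\int_{\norm{x}\leq a}K^2(x)\nu(dx)<\infty$ and \eqref{eq:K-2m} for every $m$) into it, and your treatment of the region $\{\norm{x}>1\}$ — finite mass of a L\'evy measure there, plus all moments of $\nu$, plus $K(x)\leq C_0(1+\norm{x}^p)$ — is correct.

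The genuine gap is exactly at the point you flag and then wave away. Under the standard meaning of polynomial growth, $K(x)\leq C_0(1+\norm{x}^p)$ gives only \emph{boundedness} of $K$ on $\{\norm{x}\leq 1\}$; it does not give $K^{2m}(x)\leq C\norm{x}^{2m}$ near the origin, and no "natural interpretation" of a growth condition at infinity can produce a vanishing rate at $0$. A constant $K\equiv 1$ has polynomial growth, yet for an infinite-activity $\nu$ one gets $\int_{\norm{x}\leq a}K^2(x)\nu(dx)=\nu(\{\norm{x}\leq a\})=\infty$, so the step as you justify it fails, and \eqref{eq:nu-int-cond} alone cannot rescue it. The legitimate way to close the gap is not to manufacture decay of $K$ at $0$ but to use the small-jump integrability that is already in force in this chain of corollaries: the condition $\int_{\norm{x}\leq a}K^2(x)\nu(dx)<\infty$ (carried over from the preceding corollary, or supplied by the standing assumption \eqref{eq:F2-loc-bound} together with \eqref{eq:K-LG}). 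Once that is granted, polynomial growth makes $K$ bounded on $\{\norm{x}\leq 1\}$, say by $M$, whence $K^{2m}\leq M^{2m-2}K^2$ there and $\int_{\norm{x}\leq 1}K^{2m}(x)\nu(dx)\leq M^{2m-2}\int_{\norm{x}\leq 1}K^{2}(x)\nu(dx)<\infty$; combined with your (correct) large-$\norm{x}$ estimate this yields \eqref{eq:K-2m} for every $m$ and the corollary follows. In short: right reduction, right split, but the near-origin bound must come from the retained $K^2$-integrability plus boundedness of $K$, not from reinterpreting "polynomial growth".
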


\section{General martingale problem for \cadlag processes}

In this section we consider a martingale problem
connected with the law of components $(Y,C)$ of solution to \eqref{eq:SDE-gen}. The law of $(Y,C)$ is a measure on an appropriate Skorochod space, since $(Y,C)$ is a \cadlag process as a solution to \eqref{eq:SDE-gen}. So we consider the canonical space
$\Omega = D_{[\![0,T]\!]}(\mathbf{R}^d \times \mathcal{K})$,
i.e. the space of c\`{a}dl\`{a}g functions on $[\![0,T]\!]$ with values in $\mathbf{R}^d \times \mathcal{K}$. On $\Omega$ we consider the coordinate process  $\pi_t:(y,c) \mapsto (y_t, c_t)$, $t \in [\![0,T]\!],$ and canonical filtration $\mathbb{F}$ generated by the coordinate process.
Moreover, let $\mathbb{F}^r =  (\mathcal{F}^r_s)_{s\geq r}$, $\mathcal{F}^r = \sigma (\mathcal{F}^r_s : s\geq r)$, where
$\mathcal{F}^r_s =\sigma(\pi_t : t \in [\![r,s]\!])$.

% (\textcolor[rgb]{1.00,0.00,0.00}{right continuous and completed ??}).

Let us denote by  $C^{2}  =
C^{2}( \mathbf{R}^d  \times \mathcal{K})$   -
the  space of all measurable functions $v :
\mathbf{R}^d \times \mathcal{K} \rightarrow \mathbf{R} $ such that
$ v(\cdot, k ) \in  C^{2}(\mathbf{R}^d
)$  for every $ k \in \mathcal{K} $, and let $C^{2}_c$ be a set
of functions $f \in C^{2}$ with compact support.

\begin{defn}
We say that a law $\mathbb{P}$ on the  space
$(\Omega,\mathcal{F}^r)$
solves a time-dependent (local) martingale
problem  started at time $r$  with initial distribution $\eta$ for a family of operators $\mathcal{A}=(\mathcal{A}_t)_{t \in [\![ r,T ]\!]}$ if

\begin{enumerate}
 \item the measure $\eta$ is a distribution of $(y_r,c_r)$,

  \item for every function $v \in
C^{2}_c$
  the process
\[
    M^v_t := v(y_t, c_t) - \int_r^t \mathcal{A}_u v(y_{u-}, c_{u-})  du
    \]
is an $\mathbb{F}^r$ (local) martingale on $[\![ r,T ]\!]$ with respect to $\mathbb{P}$.
% on $(\Omega,\mathcal{F}^r,\mathbb{P})$.
\end{enumerate}
We say that the (local) martingale problem for $\mathcal{A}$ is well-posed if for any $r \in ]\!]0,T [\![$ and $(y,c) \in \mathbf{R}^d \times \mathcal{K}$ there exists exactly one solution to the martingale problem for $A$ started at time $r$ with initial distribution $\delta_{y,c}$ (cf. Stroock \cite{stroock1975}).
\end{defn}
\begin{rem}
In the case of diffusion processes, so in the case when the
operators  $(\mathcal{A}_t)_{t \in [\![ r,T ]\!]}$ are the second order differential operators, a
solution to the local martingale problem for  $(\mathcal{A}_t)_{t \in [\![ r,T ]\!]}$ is also
a solution to the martingale problem for  $(\mathcal{A}_t)_{t \in [\![ r,T ]\!]}$.
 %there is
%equivalence between solutions to local martingale problems and to
%martingale problems.
This follows from the fact that  for  each $v \in C^{2}_c$ the function $\mathcal{A}_t v $ is bounded and
continuous, which implies that  the local martingale
$M^v$ is a true martingale.
 However, for processes with jumps this is not the case,
  unless you make quite restrictive assumption on coefficients that ensure boundedness of $\mathcal{A}_t v  $ for $v \in C^{2}_c$, see  Kurtz \cite[Thm. 3.1]{kurtz2010}.
  % in this regard.
\end{rem}
In the next proposition we formulate result which generalize  the well known fact for diffusions (see, e.g. Kallenberg \cite[Thm. 18.10]{kallenberg}) which states that the well-posedness of the martingale
problem for $\mathcal{A}$ implies the existence of solutions to the martingale problem with an arbitrary initial distributions. Results of such type for L\`{e}vy type operators, under different assumptions, can be found e.g. in Stroock \cite{stroock1975} or Ethier and Kurtz \cite[Chapter 4]{ethkur1986}.

\begin{prop}
Assume that on the canonical space the martingale problem for $\mathcal{A}$ is well-posed.
For an arbitrary probability measure $\eta $ on ${\mathbb{R}^{d} \times \mathcal{K}}$ the martingale problem   for $\mathcal{A}$ started at $r \in [\![ 0,T ]\!]$ with the initial distribution $\eta$ has the unique solution given by
\begin{align}\label{eq:P-rmu}
\mathbf{P}_{r}^\eta := \int_{\mathbb{R}^{d} \times \mathcal{K}}  \mathbf{P}_{r,y,c}  \ \eta(dy,dc) ,
\end{align}
where $\mathbf{P}_{r,y,c}  $ is the unique solution to the martingale problem
for $\mathcal{A}$  started  at $r \in [\![0,T]\!]$ with initial distribution $\delta_{(y,c)}$.
\end{prop}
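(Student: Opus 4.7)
The plan is to prove existence by verification and uniqueness by a disintegration argument. A key preliminary is that the map $(y,c) \mapsto \mathbf{P}_{r,y,c}(F)$ is Borel measurable for every $F \in \mathcal{F}^r$; this is a standard consequence of the well-posedness of the martingale problem, established by a monotone class argument starting from the $\pi$-system of finite-dimensional cylinders, on which measurability follows from the uniqueness of $\mathbf{P}_{r,y,c}$ (cf.\ Stroock \cite{stroock1975} or Ethier and Kurtz \cite{ethkur1986}). In particular, the integral defining $\mathbf{P}^\eta_r$ in \eqref{eq:P-rmu} makes sense and yields a probability measure on $(\Omega, \mathcal{F}^r)$.

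For existence, I first note that $\mathbf{P}^\eta_r \circ \pi_r^{-1}(A) = \int \delta_{(y,c)}(A)\, \eta(dy,dc) = \eta(A)$, so the initial distribution is $\eta$. For the martingale property, fix $v \in C^{2}_c$, $r \le s \le t \le T$, and $A \in \mathcal{F}^r_s$; by Fubini,
\[
\mathbf{E}_{\mathbf{P}^\eta_r}\!\left[ \I_A (M^v_t - M^v_s) \right]
= \int \mathbf{E}_{\mathbf{P}_{r,y,c}}\!\left[ \I_A (M^v_t - M^v_s) \right] \eta(dy,dc) = 0,
\]
since $M^v$ is an $\mathbb{F}^r$-martingale under each $\mathbf{P}_{r,y,c}$.

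For uniqueness, let $\mathbf{Q}$ be any solution of the martingale problem started at $r$ with initial distribution $\eta$. Since $\Omega = D_{[\![0,T]\!]}(\mathbf{R}^d \times \mathcal{K})$ is Polish, there exists a regular conditional distribution $\mathbf{Q}_{r,y,c} := \mathbf{Q}(\cdot \mid \pi_r = (y,c))$, giving $\mathbf{Q} = \int \mathbf{Q}_{r,y,c}\, \eta(dy,dc)$. I claim that for $\eta$-a.e.\ $(y,c)$ the measure $\mathbf{Q}_{r,y,c}$ solves the martingale problem started at $r$ with initial distribution $\delta_{(y,c)}$. The initial condition is automatic from the definition of regular conditional probability. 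For the martingale property, fix $v \in C^{2}_c$, $r \le s \le t \le T$, $A \in \mathcal{F}^r_s$, and any Borel $B \subseteq \mathbf{R}^d \times \mathcal{K}$; then $\I_A \I_B(\pi_r)$ is $\mathcal{F}^r_s$-measurable, so
\[
\int_B \mathbf{E}_{\mathbf{Q}_{r,y,c}}\!\left[ \I_A (M^v_t - M^v_s) \right] \eta(dy,dc)
= \mathbf{E}_{\mathbf{Q}}\!\left[ \I_A \I_B(\pi_r)(M^v_t - M^v_s) \right] = 0
\]
by the $\mathbf{Q}$-martingale property of $M^v$. Arbitrariness of $B$ yields the identity for $\eta$-a.e.\ $(y,c)$. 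Well-posedness then forces $\mathbf{Q}_{r,y,c} = \mathbf{P}_{r,y,c}$ for $\eta$-a.e.\ $(y,c)$, whence $\mathbf{Q} = \mathbf{P}^\eta_r$.

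The main technical obstacle is promoting this $A$-by-$A$ equality to a single $\eta$-null exceptional set outside of which $\mathbf{Q}_{r,y,c}$ satisfies the martingale property for every choice of $(s,t,A,v)$ simultaneously. I would address this by restricting $s,t$ to a countable dense subset of $[\![r,T]\!]$, exhausting each $\mathcal{F}^r_s$ by a countable algebra, and choosing a countable family of test functions $v$ dense in $C^{2}_c$; right-continuity of $t \mapsto M^v_t$ under $\mathbf{Q}_{r,y,c}$ together with a dominated convergence argument then propagates the identity from this countable collection to all admissible parameters, after which well-posedness closes the proof.
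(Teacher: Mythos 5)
Your overall architecture coincides with the paper's: existence of $\mathbf{P}^\eta_r$ by Fubini, and uniqueness by disintegrating an arbitrary solution $\mathbf{Q}$ through a regular conditional distribution given $\pi_r$ and invoking well-posedness pointwise. Your handling of the uniqueness half is in fact more careful than the paper's: the paper passes directly from $\mathbf{E}_{\mathbf{Q}}\left[(M^v_t-M^v_s)\I_A\mid y_r,c_r\right]=0$ to the assertion that $\mathbf{Q}(\cdot\mid y_r,c_r)$ solves the martingale problem, without addressing that the exceptional null set a priori depends on $(v,s,t,A)$, whereas you explicitly reduce to a countable family and propagate by right-continuity of $t\mapsto M^v_t$. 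That is the right fix.

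The gap is in your preliminary measurability claim. You assert that Borel measurability of $(y,c)\mapsto\mathbf{P}_{r,y,c}(F)$ follows by a monotone class argument starting from finite-dimensional cylinders, ``on which measurability follows from the uniqueness of $\mathbf{P}_{r,y,c}$.'' Uniqueness by itself gives no measurability information on cylinder sets; the monotone class step is fine but its base case is unproved, and that base case is exactly where the difficulty lies --- the paper devotes most of its proof to it. The paper's route is: (i) the set $\mathcal{Q}$ of laws under which $M^v$ is a martingale for all $v$ in a countable dense subset of $C_c^\infty$ is a measurable subset of the space $\mathcal{P}$ of probability measures on $\Omega$, because the martingale property is equivalent to countably many conditions $Y_{s,t,v,A}(\mathbf{P})=0$ with each $Y_{s,t,v,A}$ measurable (obtained as a limit of truncated expectations $Z_N$); (ii) intersecting with the measurable set $\mathcal{R}$ of laws with deterministic initial value shows that $\mathcal{P}_M=\set{\mathbf{P}_{r,y,c}}$ is Borel in $\mathcal{P}$; (iii) by well-posedness the evaluation $g(\mathbf{P}_{r,y,c})=(y,c)$ is a measurable bijection from $\mathcal{P}_M$ onto $\mathbf{R}^d\times\mathcal{K}$, and the Kuratowski theorem on measurable bijections between standard Borel spaces yields that $f=g^{-1}:(y,c)\mapsto\mathbf{P}_{r,y,c}$ is measurable. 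Some argument of this measurable-selection type (it does appear in Stroock and in Ethier and Kurtz, which you cite) is indispensable: without it the integral defining $\mathbf{P}^\eta_r$ is not known to make sense, so you should either supply the argument or cite the precise statement rather than gesture at a monotone class argument whose base case is the whole point.
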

\begin{proof}
We start from the proof that the right hand side of \eqref{eq:P-rmu} is well defined.
Let
\[
\mathcal{P}_M := \set{ \mathbf{P}_{r,y,c}:\ (y,c) \in \mathbf{R}^d \times \mathcal{K} }.
\]
First, we prove that $\mathcal{P}_M$ is a measurable subset of $\mathcal{P}$ - the set of all probability measure on Skorochod space $\Omega$.
Let $\mathcal{D} $ be a countable subset of  $C_c^\infty$ which is dense in  $C_c^\infty$, and
\[
\mathcal{Q}:=\set{ \mathbf{P} \in \mathcal{P} : M^v \textnormal{ is a $\mathbf{P}$ martingale on } [\![r,T]\!] \text{ for each } v \in \mathcal{D} }.
\]
Note that for $\mathbf{P} \in \mathcal{Q}$ the required martingale property is equivalent to the
following countable many relations: for $v \in \mathcal{D}$, $ s < t $ in $\mathbb{Q}\cap [\![r,T]\!] \cup \set{r,T}$, $A \in \mathcal{G}_s $
\[
\mathbf{E}_{\mathbf{P}} (( M^v_t - M^v_s ) \I_{A} ) = 0,
\]
where $\mathcal{G}_s$ is a countable set of generators of $\mathcal{F}_s$.
For fixed $v \in \mathcal{D}$, $ s < t $ in $\mathbb{Q}\cap [\![r,T]\!] \cup \set{r,T}$, and $A \in \mathcal{G}_s $
consider the mapping
\[
	Y_{s,t,v,
A} : \mathcal{P}  \mapsto \mathbf{R} \cup \set{\infty}
\]
defined
by
\[
	Y_{s,t,v,
A}(\mathbf{P} )
=
\left\{
  \begin{array}{ll}
    \mathbf{E}_{\mathbf{P}} (( M^v_t - M^v_s ) \I_{A} ) , & \hbox{if } \mathbf{E}_{\mathbf{P}} ( \left| M^v_t - M^v_s \right| \I_{A} ) < \infty, \\
    \infty, & \hbox{if } \mathbf{E}_{\mathbf{P}} ( \left| M^v_t - M^v_s \right| \I_{A} ) = \infty. \end{array}
\right.
\]
To show measurability of $\mathcal{Q}$  it is sufficient to notice that $Y_{s,t,v,
A}$ is a measurable mapping. Indeed, measurability of $Y_{s,t,v,
A}$ yields that $\mathcal{Q}$  is measurable since $\mathcal{Q}$ can be represented as the following countable intersection \[
 \mathcal{Q} = \bigcap_{\substack{ s ,t : s < t \\ s ,t \in  \mathbb{Q}\cap [\![r,T]\!] \cup \set{r,T} } } \bigcap_{v \in D } \bigcap_{A \in \mathcal{G}_s} Y^{-1}_{s,t,v,A}(0).
\]
Thus, it remains to show measurability of $Y_{s,t,v,
A}$. For $N > 0$ consider a mapping
\[
	Z_N : \mathcal{P}  \mapsto \mathbf{R}
\]
defined by
\[
Z_N(\mathbf{P}) :=
\mathbf{E}_{\mathbf{P}}( ((( M^v_t - M^v_s )  \wedge N ) \vee (-N)  ) \I_{A}).
\]
We note that the mapping $Z_N$ is measurable. Since on the set on which
$\lim_{N} Z_N$ exists and is finite we have
\[
	Y_{s,t,v,
A}(\mathbf{P}) = \lim_{N} Z_N(\mathbf{P}),
\]
which implies measurability of  $Y_{s,t,v, A}$.
Let
\[
\mathcal{R}:=\set{ \mathbf{P} \in \mathcal{P} : \exists (y,c) \in \mathbf{R}^d \times \mathcal{K} \textnormal{ such that } \mathbf{P}(y_r = y, c_r =c)=1 },
\]
so $\mathcal{R}$ is measurable (see Kallenberg \cite[Lem. 1.36]{kallenberg}). We note that
\[
\mathcal{P}_M =\mathcal{Q} \cap \mathcal{R},
\]
thus $\mathcal{P}_M$ is measurable.

In order to prove that $\mathbf{P}_{r}^\eta$ is well defined, we need to prove that the mapping
\[
f:\mathbf{R}^d \times \mathcal{K}\rightarrow \mathcal{P}_M
,
\quad
\textnormal{defined by }
f(y,c) :=\mathbf{P}_{r,y,c}
\]
is measurable. Let
\[
g : \mathcal{P}_M
\rightarrow \mathbf{R}^d \times \mathcal{K}
\]
be defined
by
\[
g(\mathbf{P}_{r,y,c}) = (y,c) .
\]
The well-possedness of martingale problem, and measurability of $\mathcal{P}_M$ imply that $g$
is a measurable bijection, and therefore by theorem of Kuratowski (see e.g. Kallenberg \cite[Theorem A1.7]{kallenberg}) $g$ has measurable inverse which is equal to $f$.

To end the proof of theorem we need
to show that $\mathbf{P}_{r}^\eta$ defined by \eqref{eq:P-rmu}, is the unique solution of the martingale problem for $\mathcal{A}$ for initial distribution $\eta$ started at $r$.
This follows from the fact that the required martingale property is equivalent to the following countably many relations: for $v \in \mathcal{D}$, $ s < t $ in $\mathbb{Q}\cap [\![r,T]\!] \cup \set{r,T}$, $A \in \mathcal{G}_s $
\[
\mathbf{E}_{\mathbf{P}_{r}^\eta} (( M^v_t - M^v_s ) \I_{A} ) = 0,
\]
which, by definition of $\mathbf{P}_{r}^\eta$, can be written in the form
\[
\int_{\mathbb{R}^{d} \times \mathcal{K}}
\mathbf{E}_{r,y,c} (( M^v_t - M^v_s ) \I_{A} )
\eta(dy,dc)= 0.
\]
This equality holds by the well-possedness of martingale problem.
Therefore $\mathbf{P}^\eta_r$ solves the required martingale problem.
Now we consider the issue of uniqueness. Let $\mathbf{Q}^\eta_r$ be  a solution to martingale problem started at $r$ from $\eta$.
For arbitrary
$v \in \mathcal{C}^\infty_c$, $ s < t $ in $[\![r,T]\!]$, $A \in \mathcal{F}_s $ we have
\[
\mathbf{E}_{\mathbf{Q}_{r}^\eta} (( M^v_t - M^v_s ) \I_{A} | y_r,c_r) = 0.
\]
Therefore $\mathbf{Q}^\eta_r( \cdot | y_r,c_r)$ is a solution to martingale problem started at $r$  from distribution $\delta_{(y_r,c_r)}$. This yields (by well possedness) that $\mathbf{Q}^\eta_r( \cdot | y_r,c_r) = \mathbf{P}_{r,y_r,c_r}( \cdot  )$  - $\mathbf{Q}^\eta_r$ a.s., and thus
\begin{align*}
\mathbf{Q}^\eta_r(\cdot) &=
\int_{\mathbf{R}^d \times \mathcal{K}} \mathbf{Q}^\eta_r( \cdot | y_r= y,c_r=c)
\mathbf{Q}^\eta_r(y_r \in dy, c_r \in dc) \\
&=
\int_{\mathbf{R}^d \times \mathcal{K}} \mathbf{P}_{r,y,c}(\cdot)
\mathbf{Q}^\eta_r(y_r \in dy, c_r \in dc)
\\
&=
\int_{\mathbf{R}^d \times \mathcal{K}} \mathbf{P}_{r,y,c\ }(\cdot)
\eta( dy, dc)
=\mathbf{P}^\eta_{r}(\cdot),
\end{align*}
where $\mathbf{Q}^\eta_r( \cdot | y_r= y,c_r=c)$ is a regular version of conditional probability $\mathbf{Q}^\eta_r( \cdot | y_r,c_r)$.
\end{proof}

%\begin{prop}
%Assume that on the canonical space the local martingale problem for $\mathcal{A}$ is well-posed.
%For an arbitrary probability measure $\eta $ on ${\mathbb{R}^{d} \times \mathcal{K}}$ the local  martingale problem started at $r \in [\![ 0,T ]\!]$ with  initial distribution $\eta$ has the unique solution given by
%\begin{align}\label{eq:P-rmu}
%\mathbf{P}_{r}^\eta := \int_{\mathbb{R}^{d} \times \mathcal{K}}  \mathbf{P}_{r,y,c}  \ \eta(dy,dc) ,
%\end{align}
%where $\mathbf{P}_{r,y,c}  $ is the unique solution to the martingale problem
%for $\mathcal{A}$ with initial distribution $\delta_{(y,c)}$ at $r \in [\![0,T]\!]$.
%\end{prop}
%\begin{proof}
%\end{proof}
%\textcolor[rgb]{1.00,0.00,0.00}{To daje takze wlasnosc Markova}
We can also prove that the  family $\set{\mathbf{P}_{r,y,c} }$ is a Markov family.
% (for definition see, e.g. )
\begin{prop}
Assume that on the canonical space the martingale problem for $\mathcal{A}$ is well-posed.
The  family $\set{\mathbf{P}_{r,y,c} : (r,y,c) \in [\![0,T]\!] \times \mathbb{R}^d \times \mathcal{K} }$  is a Markov family.
\end{prop}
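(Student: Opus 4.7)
The plan is to deduce the Markov property from well-posedness by the standard regular-conditional-probability argument, which reduces the claim to the assertion that the conditional law of the future given $\mathcal{F}^r_s$ itself solves a martingale problem that, by uniqueness, must be $\mathbf{P}_{s,\pi_s}$.

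Fix $r \in [\![0,T]\!]$, $(y,c)\in\mathbf{R}^d\times\mathcal{K}$, and $s\in [\![r,T]\!]$. Because the Skorohod space $\Omega = D_{[\![0,T]\!]}(\mathbf{R}^d\times\mathcal{K})$ is Polish and $\mathcal{F}^r_s$ is countably generated, a regular conditional probability $\mathbf{Q}_\omega := \mathbf{P}_{r,y,c}(\cdot\mid\mathcal{F}^r_s)(\omega)$ exists. The target identity to establish is
\[
\mathbf{Q}_\omega\big|_{\sigma(\pi_t \,:\, s\leq t\leq T)} = \mathbf{P}_{s,y_s(\omega),c_s(\omega)} \quad \text{for } \mathbf{P}_{r,y,c}\text{-a.e. } \omega,
\]
whose right-hand side is a genuine measurable object thanks to the measurability of $(y,c)\mapsto \mathbf{P}_{s,y,c}$ proved in the previous proposition. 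The initial-distribution part is immediate, since $\pi_s$ is $\mathcal{F}^r_s$-measurable and hence $\mathbf{Q}_\omega(\pi_s = (y_s(\omega),c_s(\omega))) = 1$.

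The core step is to check that $\mathbf{Q}_\omega$ solves the martingale problem started at $s$. For each $v \in C^2_c$, the process $M^v$ is an $\mathbb{F}^r$-martingale on $[\![r,T]\!]$ under $\mathbf{P}_{r,y,c}$, and a classical fact about regular conditional probabilities (cf.\ Stroock--Varadhan or Ethier--Kurtz) yields that $(M^v_t - M^v_s)_{t\in[\![s,T]\!]}$ is an $\mathbb{F}^r$-martingale, hence also an $\mathbb{F}^s$-martingale by the tower property, under $\mathbf{Q}_\omega$ for $\mathbf{P}_{r,y,c}$-a.e.\ $\omega$. The difficulty is that, a priori, the exceptional null set depends on $v$. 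I will circumvent this exactly as in the preceding proposition: fix a countable $\mathcal{D}\subset C^2_c$ dense in $C^2_c$ together with first and second derivatives uniformly on compacts, and test the martingale identity $\mathbf{E}_{\mathbf{Q}_\omega}[(M^v_t-M^v_s)\I_A]=0$ only against a countable generating family $\mathcal{G}_u\subset\mathcal{F}^r_u$ and over rational times $u<t$. The exceptional set then becomes a countable union of null sets, independent of $v\in\mathcal{D}$; a standard approximation extends the martingale property to all $v\in C^2_c$.

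Once $\mathbf{Q}_\omega$ is shown to solve the martingale problem for $\mathcal{A}$ started at $s$ with initial distribution $\delta_{(y_s(\omega),c_s(\omega))}$, well-posedness identifies $\mathbf{Q}_\omega$ on $\sigma(\pi_t:s\leq t\leq T)$ with $\mathbf{P}_{s,y_s(\omega),c_s(\omega)}$, which is precisely the Markov property and establishes that $\{\mathbf{P}_{r,y,c}\}$ is a Markov family. The principal technical obstacle is precisely the universalization of the null set across $v$, across generators of $\mathcal{F}^r_s$, and across times; once that hurdle is cleared by the countable-dense-subfamily device (available because the previous proposition already made the requisite measurability structure explicit), the rest of the argument is structural.
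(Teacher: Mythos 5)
Your argument is correct and is essentially the paper's own proof: condition on $\mathcal{F}^r_s$ via a regular conditional probability, show the conditional measure solves the martingale problem started at $s$ from $\delta_{(y_s,c_s)}$, and invoke well-posedness to identify it with $\mathbf{P}_{s,y_s,c_s}$. The only difference is that the paper delegates the null-set/universalization step to a citation of Rogers and Williams (Thm.\ 21.1), whereas you spell out the countable-dense-family device explicitly.
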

\begin{proof}
We have to prove that for arbitrary  $ t \geq r$ and  $A \in \mathcal{F}^t_T=\sigma((y_u,c_u) : u \in [\![t,T]\!])$ we have
\[
\mathbf{P}_{r,y,c}( A | \mathcal{F}^r_t )
=
\mathbf{P}_{t,y_t,c_t}( A  ),
\]
where by $\mathbf{P}_{r,y,c}( \ \cdot \ | \mathcal{F}^r_{t})$  we denote the
regular conditional probability of $\mathbf{P}_{r,y,c}$ with respect to $\mathcal{F}^r_{t} := \sigma((y_u,c_u) : u \in [\![r,t]\!])$.
For every $\omega \in \mathcal{F}^r_{t}$ the probability measure
 $\mathbf{P}_{r,y,c} ( \ \cdot \ |  \mathcal{F}^r_{t})(\omega)$ solves the martingale problem for  $\mathcal{A}$ started at $t$ from  $\delta_{(y_t(\omega),c_t(\omega))}$ (see e.g. Rogers and Williams \cite[Thm. 21.1]{RW2000}). By uniqueness, we obtain
$$\mathbf{P}_{r,y,c} ( A | \mathcal{F}^r_{t})\
=\mathbf{P}_{t,y_t,c_t}(A) \quad \textnormal{for every } A \in \mathcal{F}^t_T , \mathbf{P}_{r,y,c} \ a.s.$$
which implies corresponding Markov family property.
%Wiecej mamy nawet silna Markowskosc, analogicznie bierzemy moment stopu $\tau$ o wartosciach w $[\![r,T]\!]$ i wezmy regularna wersje rozkladu warunkowego wzgledem $\mathcal{F}^r_{\tau}$ (zdefinowanego jak zazwyczaj $\mathcal{F}_{\tau}$ dla filtracji) oznaczmy ta miare $\mathbf{P}_{r,y,c} | \mathcal{F}^r_{\tau}$.
%Miara $\mathbf{P}_{r,y,c} | \mathcal{F}^r_{\tau}$ (okreslona na $\mathcal{F}^{\tau}_{T}$) rozwiazuje problem martyngalowy dla $\mathcal{A}$ wystartowany w chwili $\tau$ z $(s(\tau),c(\tau))$, z jednoznacznosci problemu martyngalowego mamy ze
%$$\mathbf{P}_{r,y,c} | \mathcal{F}^r_{\tau}
%=\mathbf{P}_{\tau,y(\tau),c(\tau)}.$$
%To implikuje ze $(y_u,c_u)_{u \in [\![r,T]\!]}$ jest silnym procesem Markowa z rodzina Markowa $\mathbf{P}_{r,y,c}$.
\end{proof}

\section{Uniqueness in law of weak solutions}
%under conditions imposed on coefficients of SDE and intensity. }
%Different approach to get unique finite dimensional distributions
%is to prove that martingale problem for $\mathcal{A}$ is well
%posed . Abstract sufficient condition for well-posedness of
%martingale problem is given in Theorem 4.1, Chapter 4 of Ethier
%and Kurtz \cite[pp.182]{ethkur1986}. Moreover Corollary 4.4 also
%from Chapter 4 of Ethier and Kurtz \cite[pp.187]{ethkur1986} gives
%even condition for uniqueness of distribution in $D_{[\![0,T]\!]
%\times \mathbf{R}^d \times \mathcal{K}}[0,\infty)$. To get well
%possedness of martingale problem  one need to check that following
%\emph{range type} condition holds
%\[
%\overline{\mathcal{R}(\lambda - \mathcal{A} ) }= \overline{D (\mathcal{A})}
%\]
%for some $\lambda > 0 $, and that $\overline{D (\mathcal{A})} $ is separating. For the uniqueness of laws in $D_{[\![0,T]\!] \times \mathbf{R}^d \times \mathcal{K}}[0,\infty)$ it is sufficient  only  that there exist some $M \subset \overline{\mathcal{R}(\lambda - \mathcal{A} ) }$ which is separating.  This abstract conditions are however difficult to check in practice.
%
%
%Following theorem yields sufficient condition for the uniqueness of
%martingale problem.

In this section we prove uniqueness of finite-dimensional
distributions of jump-diffusion under some assumptions on coefficients of SDE, and
assumption on intensities. To prove this fact we use the
martingale problem and prove that it is well-posed.
%\begin{rem}
%a) $M^f$ is also an $\mathbb{F}^{Y,C}$ martingale since it  is
%$\mathbb{F}^{Y,C}$ adapted.
%
%Since $M^f$ is $\mathbb{F}^{Y,C}$ adapted, it is also a
%$\mathbb{F}^{Y,C}$ martingale.
%\noindent b)
As a consequence of Corollary \ref{cor:mart-prob}  we see that the
law of components $(Y,C)$ of a  solution to SDE \eqref{eq:SDE-gen}
solves a time dependent local martingale problem for the family of operators $(\mathcal{A}_t)_{ t \in [\![r,T]\!]}$ (see e.g. \cite[IV.7.A and IV.7.B]{ethkur1986}).
If we prove that a solution to the local martingale problem is also a solution to the martingale problem and the  martingale problem is
well-posed, then these components constitutes a time inhomogeneous
Markov process and uniqueness in law of $(Y,C)$ holds
(see e.g. \cite[Section IV.4 Theorem 4.1 and Theorem 4.2]{ethkur1986} or
Stroock \cite[Theorem 4.3]{stroock1975}).
%\end{rem}
However, if the law of process $(Y,C)$ solves the local martingale problem for
$\mathcal{A}$ given by \eqref{eq:gener-SDE}, then the law of $(Y,C)$ does not
necessarily  solves the martingale problem for $\mathcal{A}$.
%i.e. for arbitrary $f \in C^{1,2}_c$ the process $M^f$ are local martingales then i
Nevertheless,
\begin{thm}\label{prop:localmp-mp}
Assume that the law of $(Y,C)$ solves the local martingale problem for
$(\mathcal{A}_t)_{t \in [\![r,T]\!]}$. If coefficients of $(\mathcal{A}_t)_{t \in [\![r,T]\!]}$ satisfies (LG),
and \eqref{eq:LG-2} or
\begin{align}\label{eq:unif-bound}
\int_{|y| \leq 1} \norm{F(t,y,c,x)}^2 \nu(dx) \leq M,
\end{align}
then the law of $(Y,C)$ solves the martingale problem for
$(\mathcal{A}_t)_{t \in [\![r,T]\!]}$.
\end{thm}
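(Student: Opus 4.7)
The plan is to show that under the stated hypotheses, for every $v \in C^2_c$ the local martingale $M^v$ of Proposition \ref{prop:mart-prob} is in fact a true martingale on $[\![r,T]\!]$. I would invoke the standard fact that a local martingale on a finite horizon whose running supremum is integrable is a uniformly integrable martingale: if $\tau_n \uparrow T$ localizes $M^v$, then $\mathbf{E}(M^{v}_{t\wedge \tau_n} \mid \mathcal{F}_s)=M^{v}_{s\wedge \tau_n}$ and, provided $\sup_{t \in [\![r,T]\!]}|M^v_t| \in L^1$, dominated convergence passes the identity to the limit. So it suffices to establish $\mathbf{E}\sup_{t \in [\![r,T]\!]} |M^v_t| < \infty$.

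Since $v \in C^2_c$, the quantities $\|v\|_\infty$, $\|\nabla v\|_\infty$, $\|\nabla^2 v\|_\infty$ are finite and $v$, $\nabla v$, $\nabla^2 v$ all vanish outside a fixed compact set. Thus $|v(Y_t,C_t)| \leq \|v\|_\infty$ and only the compensator $\int_r^t \mathcal{A}_u v(Y_{u-},C_{u-})\,du$ needs care. I would bound $|\mathcal{A}_u v(y,c)|$ pointwise from the decomposition \eqref{eq:gener-SDE}: the drift term $\nabla v\cdot\mu$ and the diffusion term $\tfrac{1}{2}\mathrm{Tr}(a\nabla^2 v)$ are uniformly bounded, because their factors containing derivatives of $v$ have compact support and $\mu$, $a=\sigma\sigma^\top$ are locally bounded; the switching sum $\sum_{k}(v(y+\rho^{c,k})-v(y))\lambda^{c,k}$ is bounded by $2\|v\|_\infty \sum_k \|\lambda^{c,k}\|_\infty$. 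For the L\'evy integral I would split at $\|x\|=a$: on small jumps, second-order Taylor expansion gives $|v(y+F)-v(y)-\nabla v(y)F|\leq \tfrac{1}{2}\|\nabla^2 v\|_\infty |F|^2$, which combined with (LG) integrates to a bound of order $1+|y|^2$; on large jumps, $|v(y+F)-v(y)|\leq 2\|v\|_\infty$ integrates to $2\|v\|_\infty \nu(\|x\|>a) < \infty$ by \eqref{eq:nu-int-cond}. Collecting these estimates yields $|\mathcal{A}_u v(y,c)|\leq \tilde{C}(1+|y|^2)$ for a constant $\tilde{C}$ depending only on $v$, the coefficients, and $\nu$.

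Under (LG) and \eqref{eq:LG-2}, Corollary \ref{eq:LG+LG-2} gives $\mathbf{E}\sup_{t \in [\![r,T]\!]}|Y_t|^2 < \infty$, and hence
\[
\mathbf{E}\int_r^T |\mathcal{A}_u v(Y_{u-},C_{u-})|\,du \;\leq\; \tilde{C}(T-r)\Bigl(1+\mathbf{E}\sup_{u \in [\![r,T]\!]}|Y_u|^2\Bigr) \;<\; \infty,
\]
from which the integrability of $\sup_{t}|M^v_t|$ follows. In the alternative regime where \eqref{eq:unif-bound} is imposed in place of \eqref{eq:LG-2}, the same Taylor step yields a $y$-uniform estimate on the small-jump integrand, so that $\mathcal{A}_u v$ becomes uniformly bounded on $[\![r,T]\!] \times \mathbf{R}^d \times \mathcal{K}$ and the bound $\mathbf{E}\int_r^T|\mathcal{A}_u v|\,du < \infty$ is immediate, without any moment assumption on $Y$.

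The main technical obstacle is the L\'evy term in $\mathcal{A}_u v$ at points $y$ lying outside the support of $v$: there $\nabla v(y)=0$, but large jumps $F(u,y,c,x)$ can still land inside $\mathrm{supp}(v)$ from arbitrarily far away, so the Taylor cancellation alone is not available on $\{\|x\|>a\}$. The resolution is that $\nu$ is a finite measure on $\{\|x\|>a\}$ by \eqref{eq:nu-int-cond}, independently of $y$, so the large-jump contribution is absorbed by $2\|v\|_\infty \nu(\|x\|>a)$; this is exactly why splitting at $\|x\|=a$ is robust against the compact-support issue and produces the manageable $1+|y|^2$ growth that the moment bound of Theorem \ref{prop:moment-mp} was designed to accommodate.
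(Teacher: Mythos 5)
Your proposal is correct and follows essentially the same route as the paper: in the uniform-bound case you show $\mathcal{A}_u v$ is bounded so $M^v$ is a bounded local martingale, and in the \eqref{eq:LG-2} case you derive $|\mathcal{A}_u v(y,c)|\leq \tilde C(1+|y|^2)$ and combine it with the second-moment estimate of Corollary \ref{eq:LG+LG-2} to get an integrable supremum. The paper's proof is just a terser version of the same argument; your added details (the Taylor bound on small jumps, the finiteness of $\nu(\norm{x}>a)$ for the large-jump term) are exactly the computations it leaves implicit.
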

\begin{proof}
%\textcolor[rgb]{1.00,0.00,0.00}{
First assume that (LG) and \eqref{eq:unif-bound} holds and suppose that the law of $(Y,C)$ solves local martingale problem for $(\mathcal{A}_t)_{t \in [\![r,T]\!]}$. This implies that
 $\mathcal{A}_t f$ is bounded for $f \in C^{2}_c$. Therefore,
for any $f \in C^{2}_c$, the local martingale $M^f$ is bounded, so it is a martingale. This proves that if the law of $(Y,C)$ solves the local martingale problem for $(\mathcal{A}_t)_{t \in [\![r,T]\!]}$ then it is also a solution to the martingale problem for $(\mathcal{A}_t)_{t \in [\![r,T]\!]}$.
%}

%\textcolor[rgb]{1.00,0.00,0.00}{
Now assume that (LG) and \eqref{eq:LG+LG-2} hold and suppose that the law of $(Y,C)$ solves the  local martingale problem for $(\mathcal{A}_t)_{t \in [\![r,T]\!]}$.
%}
From  Corollary  \ref{eq:LG+LG-2} we infer that every
solution to the local martingale problem for $(\mathcal{A}_t)_{t \in [\![r,T]\!]}$ satisfies
\[
    \mathbb{E} \left(\sup_{ t  \in [\![ r, T ]\!] } |Y_t|^2 \right)< K
\]
%Hence similar arguments as in Proposition \ref{prop:mart-prob} gives assertion.
for some constant $K>0$. Moreover, for  $f \in C^{2}_c$ it follows
from \eqref{eq:LG-2} and (LG) that
\[
\left|\mathcal{A}_t f (y,c) \right| \leq K_f (1 + |y|^2).
\]
%and also that \eqref{eq:sup-S2} holds for $m=1$.
This implies that the local martingale $M^f$ has an integrable supremum
and therefore is a martingale.
\end{proof}
%\textcolor[rgb]{1.00,0.00,0.00}{
\begin{rem}
 The set of conditions (LG)  and \eqref{eq:unif-bound}
comparing  to (LG)  and \eqref{eq:LG-2} is on the one hand more  restrictive and on the other hand less restrictive because there is not assumptions on $\rho$.
\end{rem}
%}

%\textcolor[rgb]{1.00,0.00,0.00}{JACKU: Nie widze do czego jest potrzebna ta uwaga i wniosek ponizej, chetnie bym sie ich pozbyl}
%In fact we can give even weaker assumptions guaranteeing the corresponding martingale property. However this requires different arguments. Using semimartingale decomposition from step 1) of the proof of Proposition \ref{prop:moment-mp} and the result of Proposition \ref{prop:moment-mp}  one can prove
%\begin{cor}
%Assume the law of process  $(Y,C)$  defined on some stochastic basis  solves local martingale problem for $\mathcal{A}$ started at $r$ from $(y,c)$ and that the (LG) \textcolor[rg	 b]{1.00,0.00,0.00}{and \eqref{eq:LG+LG-2} }holds. Then for $v \in C^{1,2}$ with bounded first derivatives, the process defined, for $t \in [\![ r, T  ]\!]$, by
%\begin{align}\label{eq:def-Mv2}
%M^v_t := v(t,Y_t, C_t) - v(r,Y_r, C_r) - \int_r^t (\partial_u + \mathcal{A}_u) v(Y_{u-}, C_{u-})  du
%\end{align}
%is an $\mathbb{F}^{S\textcolor[rgb]{1.00,0.00,0.00}{,C}}$-martingale.
%\end{cor}

\begin{thm}\label{thm:markov-mp}
%Assume that for given measurable
Let continuous functions $\lambda^{i,j}$, $i \neq j,\  i,j \in
\mathcal{K}$, satisfy $(\Lambda)$ and
\begin{flalign*}
(\Lambda_0) && \lambda^{i,j}(t, y) \equiv 0 \qquad \text{ or }
\qquad 0 <  \lambda^{i,j}(t, y)  \quad  \forall t \in [\![0,T]\!] \quad
\forall s \in \mathbf{R}^d. &&
\end{flalign*}
Moreover assume (Lip), (LG), (Cont), (LB) and \eqref{eq:LG-2}.
%that $\mu$,$\sigma$,$\rho^{i,j}$ satisfies global Lipschitz and
%linear growth conditions
Then the martingale problem for $(\mathcal{A}_t)_{t \in [\![0,T]\!]} $, defined by
\eqref{eq:gener-SDE}, is well-posed.
\end{thm}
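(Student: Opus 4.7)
The plan is to establish existence and uniqueness separately. Existence has already been essentially prepared by earlier results: Theorem \ref{thm:sc-construction} produces a weak solution $(Y,C,W,\Pi,N)$ to \eqref{eq:SDE-gen} for every starting point $(r,y,c)$; Proposition \ref{prop:mart-prob} and Corollary \ref{cor:mart-prob} show that the law of $(Y,C)$ under $\mathbf{P}^{\lambda,r}$ solves the local martingale problem for $(\mathcal{A}_t)$; and Theorem \ref{prop:localmp-mp}, applied under the present hypotheses (LG), \eqref{eq:LG-2} and (LB), upgrades this to a genuine martingale problem solution. Hence a solution started from $\delta_{(y,c)}$ at time $r$ exists.

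The core of the proof is uniqueness, and here my strategy is to \emph{reverse} the Girsanov construction used in Theorem \ref{thm:sc-construction}. Let $\mathbf{P}$ be any solution on the canonical space started at $(r,y,c)$. By Theorem \ref{thm:semimart-dec} the coordinate process $(y,c)$ is a semimartingale with explicitly identified characteristics $(\widetilde{B},\widetilde{C},\widetilde{\nu})$; in particular its random measure of jumps has atoms of the two distinct types appearing in \eqref{eq:komp-sc} (those coming from an $F$-part and those coming from the regime switches). Passing, if necessary, to an enlarged probability space, a standard Jacod--Grigelionis representation of semimartingales with given characteristics (cf.\ Jacod--Shiryaev II.2.34 and II.7) produces a Brownian motion $W$, an independent Poisson random measure $\Pi$ with intensity $\nu(dx)\,du$, and counting processes $N^{i,j}$ with compensators $\int_r^{\cdot}\lambda^{i,j}(u,Y_{u-})\,du$, such that the tuple $(Y,C,W,\Pi,N)$ satisfies SDE \eqref{eq:SDE-gen} together with the no-common-jump relations \eqref{eq:hip-A}--\eqref{eq:hip-B}. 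Because of $(\Lambda_0)$ (each $\lambda^{i,j}$ is either identically $0$ or strictly positive) combined with boundedness in $(\Lambda)$, the Dol\'eans exponential in \eqref{eq:density-p-lambda} is strictly positive and a true martingale; I then define $\mathbf{Q}$ by $d\mathbf{Q}/d\mathbf{P} = 1/Z_T$. Under $\mathbf{Q}$ every $N^{i,j}$ with a nonzero intensity becomes a standard unit-rate Poisson process (while an identically-zero intensity component corresponds to a process that is almost surely zero under both measures), so $(Y,C)$ solves SDE \eqref{eq:SDE-2} driven by independent classical noise.

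Under (Lip) and (LG), SDE \eqref{eq:SDE-2} has pathwise uniqueness by Applebaum \cite[Thm.~6.2.9]{apple2004}; hence by a Yamada--Watanabe argument the joint law of $(Y,C,W,\Pi,N)$ under $\mathbf{Q}$ is uniquely determined by $(r,y,c)$. Transporting back via $d\mathbf{P}/d\mathbf{Q} = Z_T$, which is a measurable functional of $(Y, W, \Pi, N)$, uniquely determines $\mathbf{P}$ and therefore the marginal law of $(Y,C)$, yielding well-posedness. I expect the main technical obstacle to be the reconstruction step: recovering driving noises $(W,\Pi,N)$ on an extension of the canonical space so that the atomic and diffuse parts of the jump measure of $(Y,C)$ separate cleanly into the $\Pi$-part and the $N^{i,j}$-part with their intended intensities, and simultaneously confirming \eqref{eq:hip-A}--\eqref{eq:hip-B}. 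Continuity of $F$, $\rho^{i,j}$ and $\lambda^{i,j}$ from (Cont) and $(\Lambda)$, together with the explicit form of $\overline{\nu}$ in \eqref{eq:komp-sc}, makes this separation feasible but requires careful bookkeeping.
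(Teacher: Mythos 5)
Your proposal is correct in outline but follows a genuinely different route from the paper. The paper never reconstructs the driving noises from a martingale-problem solution: it introduces the auxiliary operator $\mathcal{A}^b$ obtained by freezing the intensities at $b^{i,j}=\sup_{t,y}\lambda^{i,j}(t,y)$, establishes well-posedness of the $\mathcal{A}^b$-problem from pathwise uniqueness of the constant-intensity SDE \eqref{eq:SDE-pom}, and then shows by a direct computation of $d[Z,\widehat M^v]$ and its compensator that the Dol\'eans exponential \eqref{eq:density-p-lambda-b} sets up a bijection between solutions of the $\mathcal{A}^b$-problem and solutions of the $\mathcal{A}$-problem on the canonical space; $(\Lambda_0)$ enters exactly as you say, to make the density strictly positive and hence invertible. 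Your plan instead takes an arbitrary solution $\mathbf{P}$, recovers $(W,\Pi,N)$ on an extension so that \eqref{eq:SDE-gen} holds, inverts the Girsanov transformation at the SDE level, and invokes pathwise uniqueness plus Yamada--Watanabe for \eqref{eq:SDE-2}. This is conceptually cleaner (uniqueness of the martingale problem is reduced in one stroke to pathwise uniqueness of the independent-noise SDE), and your computation that $1/Z_T$ is the correct density for restoring unit intensities is sound, as is your treatment of the indices with $\lambda^{i,j}\equiv 0$.

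The caveat is that the step you describe as ``standard Jacod--Grigelionis representation'' is precisely the work the paper's architecture is designed to avoid, and it is not routine here. Theorem \ref{thm:semimart-dec} gives you the compensator $\overline{\nu}(t,y,c,dz)=\nu_F(t,y,c,dz_1)\otimes\delta_0(dz_2)+\sum_k\lambda^{c,k}\,\delta_{(\rho^{c,k},k-c)}$ on the \emph{image} space $\mathbf{R}^{d+1}$; to produce a Poisson random measure $\Pi$ with intensity $\nu(dx)\,dt$ on the \emph{original} mark space $\mathbf{R}^n$ you must lift $\nu_F=\nu\circ F^{-1}_{t,y,c}$ back through $F(t,y,c,\cdot)$, which need not be injective, so auxiliary randomization on the extension is unavoidable; you must simultaneously peel off the atomic part into counting processes $N^{i,j}$ with compensators $\int H^i_{u-}\lambda^{i,j}(u,Y_{u-})\,du$ and verify \eqref{eq:hip-A}--\eqref{eq:hip-B} (this requires arguing that the diffuse and atomic contributions to $\overline{\nu}$ cannot produce simultaneous jumps, e.g.\ by noting that the switching jumps are exactly those with $z_2\neq 0$). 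All of this can be done with the representation theorems of Jacod--Shiryaev (Thm.\ III.2.26) or El Karoui--Lepeltier, but it is the main technical content of your proof and you have only flagged it, not carried it out. As written, the argument is a viable alternative skeleton rather than a complete proof; the paper's bracket computation replaces exactly this missing block.
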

\begin{proof}
%Denote $\mathcal{S} := \set{ (i,j) \in \mathcal{K} \times \mathcal{K} : i \neq j, \lambda^{i,j} > 0 }$ and
Fix arbitrary $r \in [\![0,T[\![$, $(y,c) \in \mathbf{R}^d \times \mathcal{K}$.
Let $\mathcal{ A }^b_t $ be an operator defined by
\begin{align}
%\nonumber
\label{eq:gener-SDE-pom} & \mathcal{ A }^b_t v(y,c) :=
%\partial_t v(t,y,c) +
\nabla v(t,y,c)\mu(t,y,c) +
\frac{1}{2}
 \Tr \left( a(t,y,c)\nabla^2 v(t,y,c) \right)
\\
\nonumber & + \int_{\mathbf{R}^n}\!\!\!\left( v( t, y  + F(t,y,c,x)
,c) - v( t,y,c) - \nabla v(t,y,c)F(t,y,c,x) \I_\set{ \norm{x} \leq
a } \right)\nu(dx)
\\
\nonumber
 &
+ \sum_{k \in \mathcal{K} \setminus c } \left( v( t, y +
\rho^{c,k}(t, y),k) - v(t,y,c) \right)b^{c,k},
\end{align}
%where
%\[
%\mu_d (t,y,c) := \mu(t,y,c) - \sum_{ k \in \mathcal{K} \setminus c } \rho^{c,k}(t, y) \left( \lambda^{c,k}(t, y) - d^{c,k} \right).
%\]
where we put $b^{c,k}\!\!\! :=\! \sup_{(t, y) \in [\![0,T]\!]\times \mathbf{R}^d } \lambda^{c,k}(t, y)$.
The operator $\mathcal{ A }^b_t$ is well defined on $C^{1,2}_c$.
Denote $\mathcal{ A }^b = (\mathcal{ A }^b_t)_{t \in [\![ r,T ]\!]} $.
First, we consider the martingale problem for $\mathcal{ A
}^b$.
Take a  filtered probability space $(\Omega,\mathcal{F},\mathbb{F},
\mathbf{P}^b)$
on which there exist independent processes:
a standard Brownian motion $W$, a  Poisson random measure $\Pi(dx, dt)$  with intensity measure $\nu(dx)dt$
 and the Poisson processes $(N^{i,j})_{i,j \in
\mathcal{K} : i \neq j }$  with intensities equal to $(b^{i,j})_{i,j
\in \mathcal{K} : i \neq j }$ for $b^{i,j} > 0$. If $b^{i,j}=0$,
then  we put $N^{i,j} \equiv 0$. On this stochastic basis we can
construct for arbitrary $(y,c)$, under assumptions of theorem, a unique solution to the
following SDE on $[\![r,T]\!]$:
\begin{align}
\nonumber d Y_t &= \mu( t, Y_{t-} , C_{t-}) dt  + \sigma( t,
Y_{t- }, C_{t-}) d W_t +
\int_{\norm{x} \leq a  } \!\!\!\!F( t, Y_{t- }, C_{t-}, x) \widetilde{ \Pi } (dx , dt)
\\
\label{eq:SDE-pom} &
+
\int_{\norm{x} \geq  a  } \!\!\!\!F( t, Y_{t- }, C_{t-}, x) \Pi  (dx , dt)
  + \sum_{i,j:j\neq i} \rho^{i,j}(t,Y_{t-}) \I_\set{i}(C_{t-})
 d N^{i,j}_t,
\\
\nonumber
 d C_t &= \sum_{i,j \neq i} (j - i) \I_{i} (C_{t-}) d
 N^{i,j}_t ,
\\
\nonumber  Y_r &= y,  \qquad C_r =c.
\end{align}
By Proposition \ref{prop:mart-prob} and Proposition
\ref{prop:localmp-mp} the law of $(Y,C)$ solves the martingale
problem for $\mathcal{A}^b$, since SDE \eqref{eq:SDE-pom} is a
special case of \eqref{eq:SDE-gen}.
 %This SDE has therefore a u
The uniqueness of a strong solution to \eqref{eq:SDE-pom} implies pathwise uniqueness and
therefore martingale problem for $\mathcal{A}_b$ is well-posed
\cite[Corollary 140]{rong1997}.

%\mn{moze out - So the solution $(t,Y_t,C_t)_{t \geq 0 }$ is a
%Markov
%process with unique distribution. }%\textcolor[rgb]{0.98,0.00,0.00}{ Po co dalsza czesc zdania?
%%%} and since $C$ can be identified as a time homogenous Markov chain
%%%with intensity matrix $[b^{i,j}]_{i,j \in \mathcal{K}}$ independent
%%%from L\'{e}vy process $Z$.

 Let $\overline{\mathbf{P}}^b_{r,y,c}$ denote the law of
this unique solution on $\overline{\Omega} = D_{[\![0,T]\!]}(\mathbf{R}^d)
\times D_{[\![0,T]\!]}(\mathcal{K})$ endowed with $\overline{\mathbf{P}}^b_{r,y,c}$ - completed canonical filtration
$\overline{\mathbb{F}} = (\overline{\mathcal{F}}_t)_{t \in [\![0,T]\!]}$
 generated by the coordinate process $\pi_t : (y,c) \mapsto
(y_t,c_t)$.
Therefore, for every $v \in C^{2}_c$, the process
\begin{align}\label{eq:mart-Mv}
M^{v}_t := v(y_t,c_t)  - \int_{r}^t \mathcal{A}^b_v v( y_{u-}, c_{u-} )  du
\end{align}
is an $\overline{\mathbb{F}}$--martingale under
$\overline{\mathbf{P}}^b_{r,y,c}$. We now introduce on $\overline{\Omega}$
the nonanticipating processes:
\[
    H^{i,j}_t(y,c) := \sum_{r < s \leq t} \I_\set{c_{s-} =i } \I_\set{c_{s} =j },
    \qquad
    H^{i}_t(y,c)  := \I_\set{c_{t} =i }.
\]
By definition, they depend only on the path of process $c$.
 The process $H^{i,j}$ is a
counting process with the $\overline{\mathbb{F}}$--intensity given
by  $\int_r^{\cdot} H^i_{u-} b^{i,j} du  $, i.e., the process
\begin{align}\label{eq:mart-Mij-b}
    M^{i,j}_t :=  H^{i,j}_t - \int_r^{t} H^i_{u-} b^{i,j} du
\end{align}
is an $\overline{\mathbb{F}}$--martingale. Now we define a new
probability measure $\overline{\mathbf{P}}^\lambda_{r,y,c}$ on
$(\overline{\Omega}, \overline{\mathcal{F}}_T )$ by the formula
%  law, and apply following Girsanov transformation
\begin{equation}\label{eq:density-p-lambda-b}
\frac{d \overline{\mathbf{P}}^\lambda_{r,y,c}}{ d \overline{\mathbf{P}}^b_{r,y,c}}
:= Z_T := \mathcal{E}_T \left( \sum_{(i,j) : b^{i,j} > 0}
\int_{]\!]r, \cdot]\!]} \left( \frac{\lambda^{i,j}(t,y_{t-})}{b^{i,j}} -
1 \right) ( d H^{i,j}_t - H^{i}_{t-}b^{i,j}dt )\right) .
\end{equation}
By our assumptions, this is the probability measure equivalent to
$\overline{\mathbf{P}}^b_{r,y,c}$.
%%\textcolor{red}{Czy to powinno byc wzgledem rozkladu rozwiazania czy wzgledem $\mathbf{P}^d$ jak wzglêdem rozk³adu rozwi¹zania to nie mozemy stosowac Girsanowa ?}
%By Girsanov theorem \cite[Thm. VI.2.T4]{bremaud1981} we have that $H^{i,j}(c)$  has $\mathbf{P}^{\lambda,r}$
%compensator $\int_{]0,\cdot]} H^{i}_{u-}(c) \lambda^{i,j}(u,y_{u-}) du$
%%.
%%Therefore we  can check now that
%and $\mathbf{P}^{\lambda,r}$ solves our original martingale problem for $\mathcal{A}$ defined by \eqref{eq:gener-SDE}.
We prove that $\overline{\mathbf{P}}^\lambda_{r,y,c}$ solves our original
martingale problem. To do this  we check that for every $v \in
C^{2}_c$  the process
 \[
  \widehat{M}^v_t := v(y_t,c_t) - \int_{r}^t \mathcal{A}_u v (y_{u-},c_{u-}) du
 \]
 is an $\overline{\mathbb{F}}$--martingale under
 $\overline{\mathbf{P}}^\lambda_{r,y,c}$,
 or equivalently that $\widehat{M}^v
Z$ is an $\overline{\mathbb{F}}$--martingale under
$\overline{\mathbf{P}}^b_{r,y,c}$, where $Z_t :=
\mathbb{E}_{\overline{\mathbf{P}}^b_{r,y,c}} ( Z_T |
\overline{\mathcal{F}}_t )$. To prove this, we first decompose
$\mathcal{A} $ as
\[
\mathcal{A} = \mathcal{A}_b + \mathcal{B},
\]
where $\mathcal{A}_b$ is defined by \eqref{eq:gener-SDE-pom} and
\begin{equation}  \label{eq3/15}
\mathcal{B} v(t,y,c)  := \sum_{k \neq c } \left( v( t, y +
\rho^{c,k}(t, y),k) - v(t,y,c) \right)(\lambda^{c,k}(t, y) -
b^{c,k}).
\end{equation}
%We have that for $v \in C^{1,2}_c$
%\[
%    M^v_t := v(t,y_t, c_t) - v(0,y_0, c_0) - \int_{0}^t \mathcal{A}_b v(u,y_u, c_u) du.
%\]
%is a $\overline{\mathbf{P}}^b$ martingale.
 Obviously,
\[
    \widehat{M}^v_t = M^v_t - \int_r^t \mathcal{B} v(u,y_{u-},c_{u-}) du,
\]
where $M^v$ is given by \eqref{eq:mart-Mv}, so $M^v$ is an
$\overline{\mathbb{F}}$--martingale under
 $\overline{\mathbf{P}}^b_{r,y,c}$.  Therefore, by integration by parts formula, we have
\begin{align}\label{eq:dMvtetat}
    d
    (\widehat{M}^v_t Z_t)
    =
     \widehat{M}^v_{t-} d Z_t + Z_{t-} d  M^v_t  - Z_{t- } \mathcal{B }v(t-,y_{t-},c_{t-}) dt
    +
    d [ Z, \widehat{M}^v ]_t.
\end{align}
Since $\left( \left[Z, \widehat{M}^v \right]_t  - \left\langle Z, \widehat{M}^v \right\rangle_t \right)_{ t \in [\![ 0,T ]\!]}$ is a local martingale to obtain that $\widehat{M}^v Z$ is a $\overline{\mathbf{P}}^b_{r,y,c}$  local martingale it is enough to  show that
\begin{equation}  \label{eq1/15}
    d  \left\langle Z, \widehat{M}^v \right\rangle_t =
    Z_{t- } \mathcal{B }v(t-,y_{t-},c_{t-} )  dt.
\end{equation}
%%\textcolor[rgb]{1.00,0.00,0.00}{
%To do this we use the fact that the
%process $(y,c)$ is a semimartingale under $\mathbb{P}^b$, with the
%compensator $\overline{\nu}$ of measure of jumps given by \eqref{eq:komp-sc}
%(see the Theorem \ref{thm:semimart-dec}).
%%}
Let us denote by
$\overline{\mu}$ the measure of jumps of $(y,c)$.
%\textcolor[rgb]{1.00,0.00,0.00}{
By the Theorem \ref{thm:semimart-dec} we find that the
process $(y,c)$ is a semimartingale under $\mathbb{P}^b$, with the
compensator of measure of jumps of $(y,c)$, denoted by
$\overline{\nu}$, given by \eqref{eq:komp-sc}.
%}
%\begin{align}
%\overline{\nu}(t,y_{t-},c_{t-},dz_1, d z_2) &=
%\nu_F(t,y_{t-},c_{t-}, d z_1) \otimes \delta_\set{0}(dz_2) \\
%\nonumber & \quad + \sum_{ k \in \mathcal{K} - \set{c_{t-}}}
%b^{c_{t-},c_{t-}+k} \delta_\set{ (\rho^{c_{t-},c_{t-}+k}(t, y) ,
%k)} (dz_1, dz_2)
%\end{align}
%where $\nu_F$ is a measure defined for $A \in \mathcal{B}(\mathbf{R}^n)$  by
%\[
%    \nu_F( t,y,c, A ) := (\nu \circ F^{-1}_{t,y,c})(A)
%    =
%    \nu(y : F(t,y,c,y) \in A)
%\]
Note that we have
\begin{align*}
d [ Z, & \widehat{M}^v ]_t  =  \Delta Z_t \Delta
\widehat{M}^v_t =\\& \sum_{\substack{ i,j \in \mathcal{K}, \\ j\neq
i , b^{i,j}
> 0 }} Z_{t-} \left( \frac{\lambda^{i,j}(t,y_{t-})}{b^{i,j}} - 1
\right) \left( v( t, y_{t-} \!+ \Delta y_t,i + \Delta c_t) -
v(t,y_{t-},i) \right) \Delta H^{i,j}_t  .
\end{align*}
Since, for
$j \neq i$,
\[
\Delta H^{i,j}_t
=H^i_{t-} \I_\set{ \Delta
c_{t} = j-i}
\]
we have
\begin{align*}
&\left( v( t, y_{t-} \!+ \Delta y_t,i + \Delta c_t) -
v(t,y_{t-},i)
\right) \Delta H^{i,j}_t
%\\
%&= \left( v( t, y_{t-} \!+ \Delta y_t,i
%+ \Delta c_t) - v(t,y_{t-},i) \right) H^i_{t-} \I_\set{ \Delta
%c_{t} = j-i}
\\
    & =\int_{\mathbf{R}^{d+1} }
\left( v( t, y_{t-} \!+ z_1,i + z_2) - v(t,y_{t-},i) \right)
H^i_{t-} \I_\set{ z_2 = j-i} \overline{\mu}( \omega, \set{t},dz_1,
dz_2  ).
\end{align*}
Now we will show that
\begin{align} \label{eq2/15}
&\int_{\mathbf{R}^{d+1} }
\!\!\!\!\!\!\!\!\!\!\left( v( t, y_{t-} \!+ z_1,i + z_2) -
v(t,y_{t-},i) \right) H^i_{t-} \I_\set{ z_2 = j-i} \widetilde{\nu}(
dt ,dz_1, dz_2  ) \\
& \notag = \left( v( t, y_{t-} \!+ \rho^{i,j}(t,y_{t-}),j) -
v(t,y_{t-},i) \right) H^{i}_{t-} b^{i,j} dt.
\end{align}
%\textcolor[rgb]{1.00,0.00,0.00}{
Using formulae \eqref{eq:komp-sc-exact} and \eqref{eq:komp-sc} defining the compensator $\widetilde{\nu}$ and its intensity $\overline{\nu}$ we
see that we  have to  compute two integrals. The first integral is
equal to
\begin{align}
\label{eq:d-eta-Mv}
&\int_{\mathbf{R}^{d+1}}
\!\!\!\!\!\!\!\!\!\!\left( v( t, y_{t-} \!+ z_1,i + z_2) -
v(t,y_{t-},i) \right) H^i_{t-} \I_\set{ j-i}(z_2) \nu_F(t,y_{t-},c_{t-}, d
y_1) \otimes \delta_\set{0}(dz_2)
\\
\nonumber
& =\int_{\mathbf{R}^d} \left( \int_{ \mathbf{R}}
\!\!\!\!\!\left( v( t, y_{t-} \!+ z_1,i + z_2) -
v(t,y_{t-},i) \right) H^i_{t-} \I_\set{j-i}(z_2)
\delta_\set{0}(dz_2) \right)\nu_F(t,y_{t-},c_{t-},dz_1)
\\
\nonumber
&= \int_{\mathbf{R}^d} \left( v( t, y_{t-} \!+ z_1,i ) -
v(t,y_{t-},i) \right) H^i_{t-} \I_\set{ j-i}(0)
 \nu_F(t,y_{t-},c_{t-},dz_1) = 0,
\end{align}
and the second integral is equal to
\begin{align*}
&\int_{\mathbf{R}^{d+1}}
\!\!\!\!\!\!\!\!\!\!\left( v( t, y_{t-} \!+ z_1,i + z_2) -
v(t,y_{t-},i) \right) H^i_{t-} \I_\set{ z_2 = j-i} \left(\sum_{ k
\in \mathcal{K} \setminus \set{c_{t-}}} b^{c_{t-},k} \delta_\set{
(\rho^{c_{t-},k}(t,y_{t-}) , k-c_{t-})} (dz_1, dz_2) \right)
\\
&=\int_{\mathbf{R}^{d+1}}
\!\!\!\!\!\!\!\!\!\!\left( v( t, y_{t-} \!+ z_1,i + z_2) -
v(t,y_{t-},i) \right) H^i_{t-} \I_\set{ z_2 = j-i} \left(\sum_{ k
\in \mathcal{K} \setminus \set{i}} b^{i,k} \delta_\set{
(\rho^{i,k}(t,y_{t-}) , k-i)} (dz_1, dz_2) \right)
\\
&= \sum_{ k \in \mathcal{K} \setminus \set{i}} \left( v( t, y_{t-} \!+
\rho^{i,k}(t,y_{t-}),k) - v(t,y_{t-},i) \right) H^i_{t-}
\I_\set{
k = j}  b^{i,k} \\
&=
 \left( v( t, y_{t-} \!+ \rho^{i,j}(t,y_{t-}),j) - v(t,y_{t-},i) \right) H^i_{t-}
 b^{i,j},
\end{align*}
so \eqref{eq2/15} holds.
%}
 Since
\[
d [ Z, \widehat{M}^v ]_t
=
\!\!\!
\sum_{ i,j: j \neq i }
\!\!
Z_{t-}
\!\!
\left(\!\frac{\lambda^{i,j}(t,y_{t-}) }{
b^{i,j} } - 1 \!\right)
\!\!
\left( v( t, y_{t-} \!+ \rho^{i,j}(t,y_{t-}),j) -
v(t,y_{t-},i) \right) d H^{i,j}_t(c)
\]
and, by \eqref{eq3/15},
\[
Z_{t- } \mathcal{B }v(t,y_{t-},c_{t-})
=
\!\!\!
\sum_{ i,j: j \neq i }
\!\!
Z_{t-}
\!\!
\left(\!\frac{\lambda^{i,j}(t,y_{t-}) }{
b^{i,j} } - 1 \!\right)
\!\!
\left( v( t, y_{t-} \!+ \rho^{i,j}(t,y_{t-}),j) -
v(t,y_{t-},i) \right) H^{i}_{t-}(c) b^{i,j},
\]
using \eqref{eq:d-eta-Mv}, \eqref{eq2/15} and \eqref{eq:mart-Mij-b}
we get from \eqref{eq:dMvtetat}  that
\begin{align*}
    &d
    (\widehat{M}^v_t Z_t)
    =
    \widehat{M}^v_{t-} d Z_t   + Z_{t-} d  M^v_t
    \\
    &
    +
    \sum_{ i,j: j \neq i } Z_{t-}
    \left(\!\frac{\lambda^{i,j}(t,y_{t-}) }{
    b^{i,j} } - 1 \!\right)
    \!
    \left( v( t, y_{t-} + \rho^{i,j}(t,y_{t-}),j) -
    v(t,y_{t-},i) \right)d M^{i,j}_t.
\end{align*}
Hence $\widehat{M}^v$ is a
$\overline{\mathbf{P}}^\lambda_{r,y,c}$ local martingale. By assumption (LB)
and Proposition \ref{prop:localmp-mp},  $\widehat{M}^v$ is a true
martingale under $\overline{\mathbf{P}}^\lambda_{r,y,c}$. Similar
argumentation shows that if $\mathbf{Q}^\lambda_{r,y,c}$ solves the
martingale problem for $\mathcal{A}$, then $d \mathbf{Q}^b_{r,y,c} :=
Z_T^{-1} d \mathbf{Q}^\lambda_{r,y,c}$, where $Z_T$ is given by
\eqref{eq:density-p-lambda-b}, solves the martingale problem for
$\mathcal{A}^b$. This implies that the martingale problem for
$\mathcal{A}$ is well-posed. Contrary, let
$\overline{\mathbf{P}}^\lambda_1$, $\overline{\mathbf{P}}^\lambda_2$
be two solutions of the martingale problem for $\mathcal{A}$ and
$\overline{\mathbf{P}}^\lambda_1 \neq
\overline{\mathbf{P}}^\lambda_2$. Then $\overline{\mathbf{P}}^b_i$
given by $d \overline{\mathbf{P}}^b_i = Z_T^{-1} d
\overline{\mathbf{P}}^\lambda_i$ for $i = 1,2$ solve the martingale
problem for $\mathcal{A}_b$ and $\overline{\mathbf{P}}^b_1 \neq
\overline{\mathbf{P}}^b_2$. This is a contradiction with
well-posedness of the martingale problem for $\mathcal{A}^b$.
\end{proof}

\section{ Examples }
Now, we present two examples illustrating how our results work: a generalized exponential Levy model and a semi-Markovian regime switching model.
\begin{example}[Generalized exponential L\'evy models]
This model generalize exponential Levy model described, e.g., in \cite{convol2005b}. Consider the following SDE
\begin{align*}\label{eq:gen-levy-exp}
    d Y_t &= Y_{t-}\!\!\left( \sigma(C_{t-}) d W_t  + \!\!\int_{\mathbf{R}}\!\!
    (e^{\sigma(C_{t-}) x } - 1 ) \widetilde{\Pi}( dx, dt)+ \!\!\!\!\sum_{i,j \in
    \mathcal{K}: j \neq i } \!\!\!\!(e^{\rho^{i,j}} - 1 )  H^{i}_{t-}d N^{i,j}_t\right) \\
    d C_t &= \sum_{i,j \in \mathcal{K} : j \neq i} (j-i)\I_\set{ i} (C_{t-}) d
    N^{i,j}_t ,
\end{align*}
where $\sigma(i) \geq 0$, $\rho^{i,j}
\in \mathbf{R}$, $N^{i,j}$ are independent Poisson processes with
constant intensities $\lambda^{i,j} >0 $, $\Pi(dx,dt)$ is a
Poisson random measure with intensity measure $\nu(dx)dt$ satisfying
\[
    \int_{|x|>1}e^{2 \sigma(i) x} \nu(dx) <
    \infty
    \qquad
    \forall
    i \in \mathcal{K}.
\]
Note that the coefficients of this SDE satisfy assumptions of Theorem
\ref{thm:markov-mp}, so there exists a solution unique in law. Moreover, by
using the Ito lemma one can show that this unique solution is of the
  form:
\[
    Y_t = Y_0 \exp\left( \int_{0}^t J(\sigma(C_{u-})) du + \int_{0}^t\sigma( C_{u-}) d Z_u  + \sum_{i,j\in \mathcal{K}: j \neq i } \int_0^t  \rho^{i,j} \I_\set{ i }(C_{u-})d N^{i,j}_u\right),
\]
where $Z$ is a L\'evy  process with the Levy-Ito decomposition:
\[
Z_t = W_t + \int_{0}^t \int_{|x| \leq 1} x \widetilde{\Pi}(dx, du) + \int_{0}^t \int_{|x| > 1} x \Pi(dx, du)
\]
and
\[
    J(u) := - \frac{u^2}{2} + \int_{\mathbf{R}} e^{u x } - 1 + ux \I_\set{|x| < 1} \nu(dx).
\]
Moreover, the  coordinate $C$ of the solution $(Y,C)$ is a Markov chain
with the state space $\mathcal{K}$.
\end{example}

\begin{example}[Semi-Markovian regime switching models]
In this example we will illustrate how a feed-back mechanism in jumps of $Y$ and intensity of jumps of $C$ give in our framework extra flexibility in modelling. We present how semi-Markov switching processes can be embedded in our framework. Let us recall that semi-Markov nature of $C$ is reflected in  the fact that the compensator of jumps from $i$ to $j$, $\lambda_{i,j}$ depends on time that process $C$ spends in current state after the last jump. We recall basic facts from theory of semi-Markov processes. The  semi-Markov process $C$ is related with a pair $(X,T)=(X_n, T_n: n \geq 1)$
which is a homogenous Markov renewal process, i.e. %with following property:
 \[
	\mathbb{P}( X_{n+1} = j , T_{n+1} - T_{n} \leq t | X_0, \ldots, X_{n} ; T_0, \ldots, T_{n} )
=
\mathbb{P}( X_{n+1} , T_{n+1} - T_{n} \leq t | X_{n}   ) = Q_{X_{n},j} (t)
\]
for every $t\geq 0$, $j \in \mathcal{K}$. $Q$ is called a semi-Markov kernel.
Let
\[
	P_{i,j} := \lim_{t \rightarrow \infty} Q_{i,j}(t).
\]
It can be shown that $(X_n)$ is a homogenous Markov chain with one-step transition matrix $P=(P_{i,j})$.
A semi-Markov process $C$ is  defined by
\[
	C_t := X_{N_t},
\]
where
\[
	N_t := \sup \set{n : T_n \leq t }.
\]
In general, a semi-Markov process does not have Markov property, the Markov property holds only at times $(T_n)$. If we assume that the distribution of holding times, i.e.
\[
F_{i,j}(t)   := \frac{Q_{i,j}(t)}{P_{i,j}}
\]
has a density $f_{i,j}$, then the related semi-Markov process considered as an MPP has intensity.
It was shown in \cite{hunt2010} that for semi-Markov processes the intensity of jumps from $i$ to $j$ depends on time that process $C$ spends after the last jump in a current state, and has the form
\[
	\lambda_{i,j}(\omega, t)
=
\lambda_{i,j}(R_{t-}(\omega))
=\frac{P_{i,j} f_{i,j}(R_{t-}(\omega)) }{1 - \sum_{m }Q_{i,m}(R_{t-}(\omega))} ,
\]
where
\begin{align}\label{eq:czas}
	R_{t} := t - T_{N_{t}}.
\end{align}
%Moreover $(S,R)$ is Markov process.
A semi-Markov regime switching process can be embedded in our framework by  considering the following SDE
\begin{align*}\label{eq:gen-levy-exp}
    d S_t &= S_{t-}\!\!\left( r dt + \sigma(C_{t-}) d W_t  + \!\!\int_{\mathbf{R}}\!\!
    (e^{\sigma(C_{t-}) x } - 1 ) \widetilde{\Pi}( dx, dt)\right) \\
    d R_t&= dt - \sum_{i,j \in \mathcal{K} : j \neq i} R_{t-} \I_\set{ i} (C_{t-}) d
    N^{i,j}_t \\
    d C_t &= \sum_{i,j \in \mathcal{K} : j \neq i} (j-i) \I_\set{ i} (C_{t-}) d
    N^{i,j}_t ,
\end{align*}
where $W$ is a Wiener process , $N^{i,j}$ are the point processes with
intensity functions given by
\[
	\lambda^{i,j} (z):=\frac{P_{i,j} f_{i,j}(z) }{1 - \sum_{m }Q_{i,m}(z)},
\] $\Pi(dx,dt)$ is a
Poisson random measure with an intensity measure $\nu(dx)dt$ satisfying
\begin{align}\label{eq:SM-levy-exp}
    \int_{|x|>1}e^{2 \sigma(i) x} \nu(dx) <
    \infty
    \qquad
    \forall
    i \in \mathcal{K}.
\end{align}
and $\sigma(i) \geq 0$. Note that the coefficients of this SDE satisfy assumptions of Theorem \ref{thm:markov-mp}. Moreover, if functions $\lambda^{i,j}$ satisfy assumptions of this theorem,
 %\cite[Thm.5.3]{jaknie2012}
 then there exists a solution unique in law.
Since the process
\[
\int_0^t \sum_{i,j \in \mathcal{K} : j \neq i} \I_\set{ i} (C_{u-}) d
    N^{i,j}_u
\]
counts the number of jumps of $C$, it is easy to see that the component $(R_t)$ represents process given by \eqref{eq:czas}.
Moreover,
using the It\^{o} lemma one can see that the process $S$  is of the
%show that this unique solution is of the
  form:
\[
    S_t = S_0 \exp\left( \int_{0}^t r + J(\sigma(C_{u-})) du + \int_{0}^t\sigma( C_{u-}) d Z_u  \right),
\]
where $Z$ is a L\'evy  process with the L\'evy-It\^{o} decomposition:
\[
Z_t = W_t + \int_{0}^t \int_{|x| \leq 1} x \widetilde{\Pi}(dx, du) + \int_{0}^t \int_{|x| > 1} x \Pi(dx, du)
\]
and
\[
    J(u) := - \frac{u^2}{2} + \int_{\mathbf{R}} ( e^{u x } - 1 + ux \I_\set{|x| < 1}) \nu(dx).
\]
Moreover, the  coordinate $C$ of the solution $(S,R,C)$ is a semi-Markov chain
with the state space $\mathcal{K}$.
\end{example}

\section{Appendix. Proof of Theorem \ref{thm:ito} }
%{\bf Proof of Lemma \ref{thm:ito}:}
\begin{proof}
For the brevity, we use the following notation $ \mu^i_t
:= \mu(t, Y_{t-},i)$, $ \sigma^i_t
:= \sigma(t, Y_{t-},i)$, $F^i_t(x) := F(t,Y_{t-},i,x)$,
$\rho^{i,j}_t := \rho^{i,j} (t,Y_{t-})$, $\lambda^{i,j}_t:=
\lambda^{i,j}(t,Y_{t-})$, $v^i(t,x) := v(t,x,i)$. Let us recall that
$H^i$, $H^{i,j}$ are given by \eqref{eq:Hi}, \eqref{eq:Hij}. By
integration by parts formula
\[
d v(t , Y_t , C_t) =  \sum_{i} d (v(t , Y_t , i) H^{i}_t) = I_1 +
I_2 + I_3 ,
%=  \sum_{i}  \left(  H^{i}_{t-} d v(t , Y_{t} , i) +  v(t , Y_{t-} , i) d H^{i}_t + \Delta v(t , Y_{t} , i) \Delta  H^i_t \right)
\]
where: % $I_1$, $I_2$, $I_3$ are given by:
\[
I_1 := \sum_{i}  H^{i}_{t-} d v(t , Y_{t} , i),
\ \ \ \ \ \ \ \
I_2 := \sum_{i}  v^i(t , Y_{t-}) d H^{i}_t,
\ \ \ \ \ \ \ \
I_3 := \sum_{i}  \Delta v^i(t , Y_{t} ) \Delta  H^i_t,
\]
and % where we denote
$\Delta v^i(t,Y_t) : = v^i(t , Y_{t} ) - v^i(t , Y_{t-} )$. We start
from calculation of $I_2$.
\begin{align*}
I_2 &= \sum_{i}  v^i(t , Y_{t-} ) d \left(\sum_{ j: j \neq i }
(H^{j,i} - H^{i,j})_t \right) = \sum_{i, j: j \neq i }  v^i(t ,
Y_{t-} ) d H^{j,i}_t  - \sum_{i, j : j \neq i }  v^i(t , Y_{t-} )
d H^{i,j}_t
\\
&= \sum_{i, j :j \neq i }  v^j(t , Y_{t-} ) d H^{i,j}_t  -
\sum_{i, j  :j \neq i }  v^i(t , Y_{t-}) d H^{i,j}_t = \sum_{i, j
:j \neq i }  ( v^j(t , Y_{t-} )   - v^i(t , Y_{t-} ) )d H^{i,j}_t.
\end{align*}
Next, from  the fact that $H^{i,j}$ and $\Pi$ have no common jumps
and the   form of $Y$ (see \eqref{eq:SDE-gen}) we note that
\[
\Delta Y_t \Delta H^{i,j}_t = \rho^{i,j}_t  \Delta H^{i,j}_t.
\]
Therefore, by  the same arguments as in $I_2$, we obtain
\begin{align*}
I_3 &= \sum_{i,j :j\neq i} ( \Delta v^j(t,Y_t) - \Delta v^i(t,Y_t))
\Delta H^{i,j}_t
\\
&= \sum_{i,j :j\neq i} \left( v^j(t,Y_{t-} + \rho^{i,j}_t) -
v^j(t,Y_{t-})- v^i(t,Y_{t-}  + \rho^{i,j}_t)+ v^i(t,Y_{t-})
\right) \Delta H^{i,j}_t .
\end{align*}
Hence %the sum of this terms is equal to
\begin{align}\label{eq:I2+I3}
I_2 + I_3 = \sum_{i,j:j\neq i} ( v^j(t,Y_{t-}  + {\rho^{i,j}_t} ) -
v^i(t,Y_{t-}  + \rho^{i,j}_t) ) d H^{i,j}_t .
\end{align}
Now we will deal with the first term $I_1$. By the It\^o lemma we
have
\begin{align*}
d v^i(t , Y_{t}) &=  \partial_t v^i(t , Y_{t})dt + \nabla v^i(t,
Y_{t-})  d Y_t +  \frac{1}{2} \Tr  \left( \sigma^i_t
(\sigma^i_t)^\top \nabla^2 v^i (t,Y_{t-}) \right) dt
%\sum_{j,k =1 }^d   \partial_{y^j}\partial_{ s^k } v^i(t, Y_{t-}) d [Y^j , Y^k ]_t
\\
& + \Delta v^i(t , Y_{t}) - \nabla v^i(t, Y_{t-})\Delta Y_t ,
\end{align*}
where we  use the fact that quadratic variation of continuous
martingale part of $Y^j$ and $Y^k$ is equal to
\begin{equation*}%\label{eq:sq-br-j-k}
d [Y^j, Y^k ]^c_t = \sum_{i} H^{i}_{t- } \sum_{l=1}^n [\sigma^i_t
 ]_{j,l}[\sigma^i_t ]_{k,l}dt = \sum_{i} H^{i}_{t- }
[\sigma^i_t (\sigma^i_t)^\top]_{j,k}dt.
\end{equation*}
 The jump $\Delta v^i(t , Y_{t})$ is equal to
\begin{align}
\nonumber H^{i}_{t-} \Delta v^i(t , Y_{t}) &= H^{i}_{t-} (v^i(t ,
Y_{t}) - v^i(t , Y_{t-}))
\\
\label{eq:jump-vi} &= H^{i}_{t-} \int_{\mathbf{R}^n} (v^i(t ,
Y_{t-}  + F^i_t(x) ) - v^i(t , Y_{t-})) \Pi (dx,dt)
\\
\nonumber & + H^{i}_{t-} \sum_{j \neq i } (v^i(t , Y_{t-} +
{\rho^{i,j}_t} ) - v^i(t , Y_{t-})) \Delta H^{i,j}_t.
\end{align}
Moreover, we have
\begin{align}
\label{eq:pvi-jump-s} H^i_{t-}\! \nabla v^i(t, Y_{t-}) \Delta Y_t
\!=\! H^i_{t-} \nabla v^i(t, Y_{t-}) \!\! \left( \!
\int_{\mathbf{R}^n} \!\!\!F^i_t(x) \Pi (dx,dt) \!+\! \sum_{j \neq
i } \rho^{i,j}_t \Delta H^{i,j}_t \!\right)\!.
\end{align}
Using \eqref{eq:jump-vi}, \eqref{eq:pvi-jump-s} and
\eqref{eq:SDE-gen} we get that single summand of $I_1$ is given by
\begin{align*}
& H^i_{t- } d  v^i(t, Y_{t-})) = H^i_{t- } \left(
\partial_t v^i(t, Y_t ) dt
+ \frac{1}{2} \Tr \left( \sigma^i_t  (\sigma^i_t)^\top \nabla^2
v^i(t, Y_{t-}) \right) \right) dt
\\
& + \ H^i_{t- } \nabla v^i(t, Y_{t-}) \!\!\left( \mu^i_t dt +
\sigma^i_t d W_t
%\right.
%\\
%&&
%\left.
+ \int_{\norm{x} <a} \!\!\!\!\!\!F^i_t(x)  \widetilde{\Pi} (dx,dt)
+ \int_{\norm{x} > a} \!\!\!\!\!\! F^i_t(x)  \Pi (dx,dt) + \sum_{j
\neq i } \rho^{i,j}_t d H^{i,j}_t \right)
 %\sum_{j,k =1 }^d   \partial_{y^j}\partial_{ s^k } v^i(t, Y_{t-})  [\sigma^i_t (\sigma^i_t)^\top]_{j,k}dt
\\
& +  H^{i}_{t-} \int_{R^n} (v^i(t , Y_{t-} + F^i_t (x) ) - v^i(t
, Y_{t-}) - \nabla v^i(t, Y_{t-})
 F^i_t(x)  ) \Pi (dx,dt) \\
 & +  H^{i}_{t-} \sum_{j \neq i } (v^i(t
, Y_{t-} + {\rho^{i,j}_t}) - v^i(t , Y_{t-}) - \nabla v^i(t,
Y_{t-}) \rho^{i,j}_t  ) d H^{i,j}_t.
\end{align*}
In the above expression the jump part can be
compensated, since $v$ satisfies (by assumption) integrability condition
\eqref{eq:ito-formula-int-cond}.  Therefore the above expression can be rearranged in the following way.
\begin{align*}
H^i_{t- } &d v^i(t,Y_t) \!= \!H^i_{t- } \!\!\Bigg[ \!\!\left(
\partial_t v^i(t, Y_t )
+
 \nabla v^i(t, Y_{t-}) \mu^i_t
+ \frac{1}{2} \Tr \left( \sigma^i_t (\sigma^i_t)^\mtop \nabla^2
v^i(t, Y_{t-}) \right) \!\!\right) \!dt
\\
& + \int_{\mathbf{R}^n}\left( v^i(t , Y_{t-} \!+\! F^i_t(x) ) -
v^i(t , Y_{t-}) - \nabla v^i(t, Y_{t-}) F^i_t (x) \I_{\set{
\norm{x}  < a }} \right) \nu(dx) dt
\\
&+ \sum_{j:j \neq i } \left( v^i(t , Y_{t-}  +
{\rho^{i,j}_t}) - v^i(t , Y_{t-})
%- \left(\nabla v^i(t, Y_{t-}) \right)^\top\rho^{i,j}_t
\right)\lambda^{i,j}_t dt\Bigg]
\\
&+ \ H^i_{t- } \nabla v^i(t, Y_{t-}) \sigma^i_t d W_t
\\
&+ \ H^{i}_{t-} \int_{\mathbf{R}^n} (v^i(t , Y_{t-}  + F^i_t(x)
) - v^i(t , Y_{t-})) \widetilde{\Pi} (dx,dt)
\\
&+ \ H^{i}_{t-} \sum_{j:j \neq i } (v^i(t , Y_{t-}  +
\rho^{i,j}_t) - v^i(t , Y_{t-})) d M^{i,j}_t.
\end{align*}
Hence and by \eqref{eq:I2+I3} \allowdisplaybreaks
\begin{align}\allowdisplaybreaks
\nonumber
 d v(t,Y_t, C_t)
&= \sum_{i} H^i_{t-} \Bigg( \partial_t v^i(t, Y_t ) +
 \nabla v^i(t, Y_{t-}) \mu^i_t
 +
\frac{1}{2} \Tr \left( \sigma^i_t  (\sigma^i_t)^\mtop \nabla^2
v^i(t, Y_{t-}) \right)
\\ \nonumber &
+ \int_{\mathbf{R}^n}\left( v^i(t , Y_{t-} \!+\!F^i_t(x) ) -
v^i(t , Y_{t-}) - \nabla v^i(t, Y_{t-})F^i_t(x) \I_{\set{
\norm{x}  < a }} \right) \nu(dx)
\\ \label{eq:dv(t,y,c)}
& + \sum_{j:j \neq i } \left( v^j(t , Y_{t-}  +
\rho^{i,j}_t) - v^i(t , Y_{t-})
%- \left( \nabla v^i(t, Y_{t-}) \right)^\top \rho^{i,j}_t
\right)\lambda^{i,j}_t \Bigg) dt
\\ \nonumber
&+ \sum_{i} H^i_{t- } \left( \nabla v^i(t, Y_{t-}) \right)^\mtop
\!\!
\sigma^i_t d W_t
\\ \nonumber &
+ \sum_{i} H^{i}_{t-} \int_{R^n} (v^i(t , Y_{t-} + F^i_t (x) ) -
v^i(t , Y_{t-})) \widetilde{\Pi} (dx,dt)
\\ \nonumber &
+
\sum_{i,j:j \neq i } (v^j(t , Y_{t-} + \rho^{i,j}_t) - v^i(t , Y_{t-})) H^{i}_{t-}d M^{i,j}_t,
\end{align}
which is precisely \eqref{eq:ito-formula}.
\end{proof}

\end{document}